\newtheorem{theorem}{Theorem}[section]
\newtheorem{lemma}[theorem]{Lemma}
\newcommand{\N}{\mathbb N}
\newcommand{\Z}{\mathbb Z}
\newcommand{\R}{\mathbb R}
\newcommand{\F}{\mathbb F}
\newcommand{\HH}{\mathsf H}
\newcommand{\Fc}{\mathcal F}
\newcommand{\vp}{\mathsf v}
\newcommand{\dd}{\mathsf d}
\newcommand{\ord}{\text{\rm ord}}
\newcommand{\supp}{\text{\rm supp}}
\newcommand{\Ker}{\text{\rm Ker}}
\newcommand{\la}{\langle}
\newcommand{\ra}{\rangle}
\renewcommand{\t}{\, | \,}
\newcommand{\be}{\begin{equation}}
\newcommand{\ee}{\end{equation}}
\newcommand{\bnml}{\begin{multline}}
\newcommand{\enml}{\end{multline}}
\newcommand{\ber}{\begin{eqnarray}}
\newcommand{\eer}{\end{eqnarray}}
\newcommand{\nn}{\nonumber}
\newcommand{\Sum}[2]{\underset{#1}{\overset{#2}{\sum}}}
\newcommand{\Summ}[1]{\underset{#1}{\sum}}
\newcommand{\und}{\;\mbox{ and }\;}
\newcommand{\bdot}{\boldsymbol{\cdot}}
\newcommand{\superimpose}[2]{%
  {\ooalign{$#1\@firstoftwo#2$\cr\hfil$#1\@secondoftwo#2$\hfil\cr}}}
\newcommand{\bulletprod}[1]{\underset{#1}{\bullet}}
\begin{document}

\title[The Large Davenport Constant I]{The Large Davenport Constant I: \\ Groups with a Cyclic, Index $2$ Subgroup}
\thanks{This work was supported by
the {\it Austrian Science Fund FWF} (Project No. P21576-N18)}

\author{Alfred Geroldinger and David J. Grynkiewicz}

\address{Institut f\"ur Mathematik und Wissenschaftliches Rechnen \\
Karl-Franzens-Universit\"at Graz \\
Heinrichstra\ss e 36\\
8010 Graz, Austria} \email{alfred.geroldinger@uni-graz.at, diambri@hotmail.com}

\subjclass[2010]{20D60, 11B75, 13A50}

\keywords{zero-sum sequence, product-one sequence, Davenport constant, Noether bound}

\begin{abstract}
Let $G$ be a finite group written multiplicatively. By a sequence over $G$, we mean a finite sequence of terms from $G$ which is unordered, repetition of terms allowed, and we say that it is a  product-one sequence if its terms can be ordered so that their product is the identity element of $G$.
The {\it small Davenport constant} $\mathsf d (G)$ is the maximal integer $\ell$ such that there is a sequence over $G$ of length $\ell$ which has  no nontrivial, product-one subsequence. The {\it large Davenport constant} $\mathsf D (G)$ is the maximal length of a minimal product-one sequence---this is a product-one sequence which cannot be factored  into two nontrivial, product-one subsequences.
It is easily observed that $\dd(G)+1\leq \mathsf D(G)$, and if $G$ is abelian, then equality holds. However, for non-abelian groups, these constants can differ significantly.
Now suppose $G$ has a cyclic, index $2$ subgroup.
Then an old result of Olson and White (dating back to  1977) implies that $\dd(G)=\frac12|G|$ if  $G$ is non-cyclic, and $\dd(G)=|G|-1$ if $G$ is cyclic. In this paper, we determine the large Davenport constant of such groups, showing that $\mathsf D(G)=\dd(G)+|G'|$, where $G'=[G,G]\leq G$ is the commutator subgroup of $G$.
\end{abstract}

\maketitle


\section{Introduction and Main Result}

Let $G$ be a multiplicatively written, finite group. A sequence $S$ over $G$ means a finite sequence of terms from $G$ which is unordered, repetition of terms allowed. We say that $S$ is a product-one sequence if its terms can be ordered so that their product equals $1$, the identity element of the group. The {\it small Davenport constant} $\mathsf d (G)$ is the maximal integer $\ell$ such that there is a sequence over $G$ of length $\ell$ which has  no nontrivial, product-one subsequence. The {\it large Davenport constant} $\mathsf D (G)$ is the maximal length of a minimal product-one sequence---this is a product-one sequence which cannot be partitioned into two nontrivial, product-one subsequences. A simple argument shows that $\mathsf d (G)+1 \le \mathsf D (G) \le |G|$.

The problem of finding the precise value  of the Davenport constant and what is now known as the  Erd{\H o}s--Ginzburg--Ziv Theorem became the starting points of Zero-Sum Theory. Since that time (dating back to the early 1960s), it has developed into a flourishing branch of Additive and Combinatorial Number Theory. We briefly discuss some of the motivation for these problems. For more detailed information, we defer to   the surveys \cite{Ca96b, Ga-Ge06b, Ge09a} or the monographs \cite{Ge-HK06a, Gr13a}.
Apart from abelian groups, the Davenport constant has also been studied for finite abelian (non-cancellative) semigroups (see \cite{Wa-Ga08a}, \cite[Proposition 2.8.13]{Ge-HK06a}).

Although the main focus of Zero-Sum Theory has been on abelian groups, research was never restricted to the abelian setting alone. To provide one example apart from the Davenport constant, let $\mathsf E (G)$ denote the smallest integer $\ell$ guaranteeing that every sequence $S$ over $G$ of length  $|S| \ge \ell$ has a product-one subsequence of length $|G|$. Motivated by the classical  Erd{\H o}s--Ginzburg--Ziv Theorem, the study of $\mathsf E (G)$ has attracted much attention for non-abelian groups \cite{Zh-Ga05a, Ba07b, Ga-Lu08a, Ga-Li10b}. In all cases studied so far (abelian and non-abelian), it has turned out that $\mathsf E (G) = |G| + \mathsf d (G)$.

If $G$ is a finite abelian group,
then an easy observation shows that $\mathsf d (G)+1 = \mathsf D (G)$. Here there is no difference between the combinatorially defined small davenport constant $\dd(G)+1$ and the  monoid theoretic large Davenport Constant $\mathsf D(G)$. In this classical setting,
the Davenport constant was first introduced  by Rogers \cite{Ro63} (though Davenport became more famous for promoting it) who pointed out a connection between $\mathsf D (G)$ and irreducible elements in a ring of algebraic integers with ideal class group isomorphic to $G$ (see Section \ref{sec-notation}). His observation was deepened by Narkiewicz \cite{Na79} whose paper was the first step in the creation of a strong bridge between the arithmetic of Krull monoids and Additive Combinatorics (via the associated monoid of zero-sum sequences over the class group; see \cite{Ge09a} for a survey).

The first attempts to study a Davenport constant in a non-abelian setting were carried out by Olson and White \cite{Ol-Wh77}, who defined the small Davenport constant of a non-abelian group and gave a general upper bound that was shown to be tight for groups having a cyclic, index $2$ subgroup; see Theorem \ref{thm-olsonwhite-upper}. However, the definition of the small Davenport constant is not as fully satisfying in this setting.
The first reason for this is simple: there is no monoid factorization interpretation of the small Davenport constant over a non-abelian group. The second reason for this regards Invariant Theory and the Noether constant.

Let $G$ be a finite group, let $\F$ be a field whose characteristic does not divide $|G|$, and let $\beta (G)$ denote the Noether constant, which is defined as the maximal degree of an invariant polynomial in a minimal generating set of the invariant ring $\F[V]^G$. When $G$ is abelian, we have $\mathsf d(G)+1=\mathsf D(G)$. However, when $G$ is non-abelian, there are examples where $\dd(G)+1<\beta(G)$, meaning the small Davenport constant cannot be used for bounding $\beta(G)$ from above. Attempting to rectify this problem, we have defined the large Davenport constant simply by taking the natural monoid theoretic definition and extending it to non-abelian groups.  The natural conjecture is that $\mathsf d (G)+1 \le \beta (G) \le \mathsf D (G)$ might hold for   groups in general.  By the results of the present paper, this conjecture holds for all groups having a cyclic, index $2$ subgroup. For more on invariant theory and the Noether constant, we refer the reader to
the monographs \cite{Ne-Sm02a, Ne07a} or to more recent work \cite{Ne07b, Ge-Ne13b}.

Our main result is the following theorem, in which we parallel the early result of Olson and White \cite{Ol-Wh77} that determined the small Davenport constant of a finite group having a cyclic, index $2$ subgroup, by instead determining the large Davenport constant for all such groups. Theorem \ref{thm-index2} covers dihedral groups, semi-dihedral groups, and generalized quaternion or dicyclic groups, as well as many more. Building upon the results of this paper, we will give more general upper bounds for $\mathsf D(G)$  in a sequel \cite{Gr14a}.

\begin{theorem}\label{thm-index2}
Let $G$ be a finite group having a cyclic, index $2$ subgroup.  Then
\[
 \mathsf D(G)=\mathsf d(G)+|G'|\quad\und\quad\mathsf d(G)=\left\{
                                                            \begin{array}{ll}
                                                              |G|-1 & \hbox{if $G$ is cyclic} \\
                                                              \frac12|G| & \hbox{if $G$ is non-cyclic,}
                                                            \end{array}
                                                          \right.
\]where $G'=[G,G]\leq G$ is the commutator subgroup of $G$.
\end{theorem}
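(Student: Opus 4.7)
The plan is to prove the formula in two halves: a lower bound $\mathsf D(G)\geq \mathsf d(G)+|G'|$ given by an explicit construction, and an upper bound $\mathsf D(G)\leq \mathsf d(G)+|G'|$ that carries the main difficulty; the Olson--White theorem already supplies the value of $\mathsf d(G)$. When $G$ is cyclic, $G'=\{1\}$ and the equality reduces to the classical abelian identity $\mathsf D(G)=\mathsf d(G)+1=|G|$, so that case is dispatched immediately. Assume from now on that $G$ is non-cyclic; fix a cyclic index-$2$ subgroup $H=\langle a\rangle$ of order $n=|G|/2$, an element $b\in G\setminus H$, and the integer $k$ satisfying $bab^{-1}=a^{k}$ (forcing $k^{2}\equiv 1\pmod{n}$). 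A short commutator calculation gives $G'=\langle a^{k-1}\rangle$, and we set $d:=|G'|=n/\gcd(n,k-1)$, so that $\mathsf d(G)=n$.

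For the lower bound, I would construct a product-one sequence of length $n+d$ admitting no partition into two nontrivial product-one subsequences. A natural candidate, adapted to the data $(n,k,b^{2})$, combines a nearly-maximal zero-sum free block inside $H$, a block of $d-1$ repetitions of a generator of $G'$, and two carefully chosen terms from $bH$ whose $H$-content is calibrated so that the total product is trivial. Minimality is then verified directly: since conjugation by $b$ acts on $H$ as $a^{i}\mapsto a^{ki}$, any proposed partition would force the two halves to have mutually inverse $H$-products modulo a shift lying inside $G'$, and the block sizes are tight enough that no such split is realizable. The factor $|G'|$ in the length is precisely what absorbs these nonabelian shifts, which is why $\mathsf D(G)$ exceeds the abelian value $\mathsf d(G)+1$ by exactly $|G'|-1$.

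The upper bound is the principal obstacle. Given a minimal product-one sequence $S$, decompose $S=S_{H}\cdot S_{bH}$; since $S$ projects to the identity of $G/H\cong\Z/2\Z$, the length $|S_{bH}|$ is even. My plan is to pair up the terms of $S_{bH}$ and collapse each pair $(ba_{i})(ba_{j})$ to a single element $b^{2}\,a_{i}^{\,k}\,a_{j}\in H$, producing a companion sequence $\tilde S$ over the abelian group $H\cong\Z/n\Z$. Kneser's Theorem, or the sharper DeVos--Goddyn--Mohar Theorem already named in the preamble, applied to the iterated sumset of $\tilde S$ would then produce, when $|S|\geq n+d+1$, a large family of subsequence sums concentrated in a single coset of a proper subgroup of $H$. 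The main technical step is the lift: one has to match this coset against the shifts that appear when reordering $S$, verify that all such shifts lie inside $G'$, and thereby extract a proper product-one subsequence of $S$ whose complement is also product-one, contradicting minimality. The bookkeeping for this lift, and in particular showing that different choices of pairing in $S_{bH}$ do not introduce new obstructions outside $G'$, is the principal difficulty of the proof.
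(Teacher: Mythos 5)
Your overall architecture (explicit atom for the lower bound, Olson--White for $\mathsf d(G)$, the upper bound as the hard part) matches the paper's, but what you have written for the upper bound is a plan rather than a proof, and the gap sits exactly where you yourself place ``the principal difficulty.'' The pair-and-collapse idea---replacing pairs $(ba_i)(ba_j)$ by $b^2a_i^{\,k}a_j\in H$ and applying Kneser or DeVos--Goddyn--Mohar to a companion sequence over $H$---cannot be pushed through uniformly as stated. First, nothing in your sketch controls the terms of $S$ lying in $H$: the collapsed sequence says nothing about them, and the paper must first show (via Cauchy--Davenport) that an atom of length $\geq \frac12|G|+|G'|+1$ has at most $|G'|-2$ terms in $H\setminus \mathsf Z(G)$ and (via a consecutive-subsequence argument) few terms in $\mathsf Z(G)$ before any analysis of the $bH$-terms can begin. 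Second, for the Kneser/DGM step to force a contradiction at length $\mathsf d(G)+|G'|+1$ one needs the relevant stabilizers to be trivial; the paper arranges this by first reducing to $|G'|=p$ prime through the inductive bound $\mathsf D(G)\leq \mathsf D(H)\,|G:H|$ applied to a subgroup with commutator subgroup of order $p$. Your proposal contains no such reduction, and for composite $|G'|$ the ``concentration in a coset of a proper subgroup'' you invoke does not by itself yield a splitting of $S$ into two nontrivial product-one parts. Third, the boundary cases are genuinely different and are not covered by the generic counting: when the Sylow $2$-subgroup is non-cyclic the paper instead uses $\mathsf D(G)\leq \mathsf D(C_2\times C_{n^-})\,|G'|$; when $G$ is dihedral of order $2n$ with $n$ odd it uses $\mathsf D(G)\leq |G|$; the case $|G'|=2$ needs a separate direct argument; and the dicyclic groups $Q_{4p}$ require a delicate auxiliary lemma about sequences over $C_{2p}$ of length $\geq 2p+4$ (itself proved with DGM and a careful multiplicity analysis). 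Your uniform scheme gives no indication how these cases, where the straightforward estimate is too weak, would be handled; in the main case the paper additionally needs the ``good subsequence'' machinery, a setpartition existence lemma, and two nontrivial claims (placement of a small subsequence away from a long interval, and that products of long good subsequences fill a full $G'$-coset) to carry out the lift you describe in one sentence.

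The lower bound is also not yet a proof: your candidate (a near-maximal zero-sum free block in $H$, $|G'|-1$ copies of a generator of $G'$, and two terms from $bH$) is different from the paper's atom $(\tau^{-1}\alpha)\bdot \alpha^{[n^+-1]}\bdot(\tau\alpha^{1-n^+})\bdot\alpha^{[n-1]}$, which repeats a generator $\alpha$ of the cyclic index-$2$ subgroup rather than a generator of $G'$, and comes with a short verification that any splitting would force $\alpha^{(r-1)(k+1)}=1$ for some $k+1\in[1,n^+-1]$, which is impossible since $\ord(\alpha^{r-1})=n^+$. Your ``the block sizes are tight enough that no such split is realizable'' is an assertion, not an argument, and it is not clear that your particular sequence is even an atom; this part would need to be either verified or replaced by the paper's construction.
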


The paper is divided as follows. In Section \ref{sec-notation}, we introduce and adapt notation used for sequences and sumsets over abelian groups  and prove several basic facts. In Section \ref{sec-upper-bounds}, we give some general upper bounds that can be used in conjunction with inductive arguments. Section \ref{sec-add} deals entirely with classical results for abelian groups, needed for later proofs, and the proof of one axillary lemma needed for handling dicyclic groups. The main bulk of the proof of Theorem \ref{thm-index2} is then carried out in Section \ref{sec-index2}, beginning with an overview  of  the possible isomorphism classes of groups having a cyclic, index $2$ subgroup.


\section{Notation and Preliminaries}\label{sec-notation}

All intervals will be discrete, so for real numbers $a,\, b \in \R$, we
set $[a, b] = \{ x \in \Z :\; a \le x \le b \}$.
If $A$ and $B$ are sets, then whenever addition or multiplication between elements of $A$ and $B$ is allowed, we define their sumset and product-set as
 $$A+B=\{a+b:\; a\in A,\,b\in B\}\quad\und\quad AB=\{ab:\;a\in A,\, b\in B\}.$$
Of course, we use the abbreviations $A+g=\{a+g:\; a\in A\}$, \  $Ag=\{ag:\;a\in A\}$ and $gB=\{gb:\;b\in B\}$ when dealing with a single element $g$ for which the respective addition or multiplication is defined.

In our main applications, all groups will be finite, but we will encounter groups written both additively and multiplicatively, reserving addition only for cases where it is a commutative operation. For the moment, assume that $G$ is a group written multiplicatively except when otherwise noted.

If $A\subset G$ is a nonempty subset, then we use $\la A\ra\leq G$ to denote the subgroup generated by $A$
 and use $\mathsf H(A):=\{g\in G:\; gA=A\}$ to denote the left {\it stabilizer} of $A$. Then $\mathsf H(A)\leq G$ is a subgroup, and  $A$ is a union of right $\mathsf H(A)$-cosets; moreover,  $\mathsf H(A) \leq G$ is the unique maximal subgroup $H$ for which $A$ is a union of right $H$-cosets.
Of course, if $G$ is abelian, then we do not need to differentiate between left and right stabilizers and simply speak of the  stabilizer of  $A$, and when $G$ is written additively, we have $\mathsf H(A)=\{g\in G:\;g+A=A\}$. For $n\geq 1$, we let $C_n$ denote a cyclic group of order $n$.

Given a normal subgroup $H \lhd G$, we let $$\phi_H \colon G\rightarrow G/H$$ denote the canonical homomorphism. The index of a subgroup $H\leq G$ is denoted $|G:H|$. When $G$ is finite, $|G:H|=|G|/|H|$. We use  standard notation for the following important subgroups:  \begin{align*}
&\mathsf Z(G)=\{g\in G:\; gx=xg \quad\mbox{ for all $x\in G$}\}\lhd G\quad\mbox{ is the \emph{center} of $G$},\\
&[x,y]=x^{-1}y^{-1}xy\in G\quad\mbox{ is the \emph{commutator} of the elements $x,\,y\in G$},\\
& G'=[G,G]=\la [x,y]:\; x,\,y\in G\ra\lhd G\quad\mbox{ is the \emph{commutator subgroup} of $G$}, \quad\und \\
& \mathsf C_G(A)= \mathsf C_G(\la A\ra)=\{g\in G:\; ga=ag\quad\mbox{ for all $a\in A$}\}\leq G \quad\mbox{ is the
\emph{centralizer} of $A\subset G$}.
\end{align*}

For a set $P$, we denote by $\mathcal F (P)$ the \ {\it free
abelian monoid} \ with basis $P$.
Then  every $a \in \mathcal F
(P)$ has a unique representation in the form
\[
a = p_1 \bdot \ldots \bdot p_{\ell} = \prod_{p \in P} p^{\mathsf v_p(a), } \quad \text{where} \ p_1, \ldots , p_{\ell} \in P , \quad
\mathsf v_p(a) \in \N_0 \ \text{ and } \ \mathsf v_p(a) = 0 \ \text{
for almost all } \ p \in P \,,
\]
and we use all notation from elementary divisibility theory. In particular, $\mathsf v_p (a)$ is the $p$-{\it adic valuation} of $a$, \
$\supp (a) = \{p \in P :\; \mathsf v_p (a) > 0\} \subset P$ is the {\it support} of $a$, \
$|a|= \ell = \sum_{p \in P}\mathsf v_p(a)$ is the \emph{length} of
$a$, and $\mathsf h (a) = \max \{ \mathsf v_p (a) :\; p \in P \}$.

\subsection*{Sequences Over Groups}
These are our main objects of study. As it is traditional in Combinatorial Number Theory, by  a {\it sequence} over a group $G$ we mean a finite, unordered sequence where the repetition of elements is allowed. We view sequences over $G$ as elements of the free abelian monoid $\mathcal F (G)$ (this point of view provides many technical advantages and was pushed forward by applications of Zero-Sum Theory in more algebraic fields, such as  Multiplicative Ideal Theory and  Factorization Theory; see  the monographs \cite{Ge-HK06a, Gr13a} or the surveys \cite{Ge-HK06b, HK08a, Sc09b, Ba-Ch11a}). So we freely use all notation from free abelian monoids for sequences, though for reason explained in the next paragraph, we denote multiplication in $\Fc(G)$ by the boldsymbol $\bdot$ rather than by juxtaposition and use brackets for all exponentiation in $\Fc(G)$. In particular, a sequence $S\in \Fc(G)$ has the form
\be\label{snot}S=g_1\bdot\ldots\bdot g_\ell=\bulletprod{i\in [1,\ell]} g_i\in \Fc(G)\ee with the $g_i\in G$ the terms of $S$.
The identity $1_{\Fc(G)} \in \mathcal F (G)$ is called the {\it empty} or {\it trivial } sequence, which is simply the sequence having no terms. For $g \in G$, \ \begin{align*}&\mathsf v_g (S)=|\{i\in [1,\ell]:\; g_i=g\}|\quad\mbox{ denotes the {\it multiplicity} of $g$ in $S$,}\\ &\mathsf h(S)=\max\{\vp_g(S):\; g\in G\}\quad \mbox{ denotes the {\it maximum multiplicity } of a term of $S$,}\end{align*}
and $T\mid S$ denotes that $T$ is a {\it subsequence} of $S$. Of course, for $T \in \mathcal F (G)$, we have $T \t S$ if and only if $\mathsf v_g (T) \le \mathsf v_g (S)$ for all $g \in G$, and in such case,  $T^{[-1]}\bdot S$ or $S\bdot T^{[-1]}$ denotes the subsequence of $S$ obtained by removing the terms of $T$ from $S$, i.e.,  $\mathsf v_g (T^{[-1]}\bdot S) = \mathsf v_g (S) - \mathsf v_g (T)$ for all $g \in G$.

In order to distinguish between the group operation in $G$ and the sequence operation in $\mathcal F (G)$, we use the boldsymbol $\bdot$ for the operation in $\Fc(G)$,  so $\mathcal F (G) = (\mathcal F (G), \bdot)$ (which coincides with the convention in the monographs \cite{Ge-HK06a, Gr13a}) and only denote multiplication in $G$ by juxtaposition of elements. In particular,  if $S_1, S_2 \in \mathcal F (G)$ and $g_1, g_2 \in G$, then $S_1 \bdot S_2 \in \mathcal F (G)$ has length $|S_1|+|S_2|$, \ $S_1 \bdot g_1 \in \mathcal F (G)$ has length $|S_1|+1$, \ $g_1g_2 \in G$ is an element of $G$, but $g_1 \bdot g_2 \in \mathcal F (G)$ is a sequence of length $2$.
In order to avoid confusion between exponentiation of the group operation in $G$ and exponentiation of the sequence operation $\bdot$ in $\Fc(G)$, we use brackets to denote exponentiation in $\mathcal F (G)$:
 $$g^{[k]}=\underset{k}{\underbrace{g\bdot\ldots\bdot g}}\in \Fc(G)\quad\und\quad T^{[k]}=\underset{k}{\underbrace{T\bdot\ldots\bdot T}}\in \Fc(G),$$ for $g \in G$, \ $T\in \Fc(G)$ and $k \in \mathbb N_0$.
When $T^{[k]}\mid S$, we extend exponentiation to include negative exponents by setting $S\bdot T^{[-k]}=S\bdot (T^{[k]})^{[-1]}\in \Fc(G)$. In particular,
 if $S \in \mathcal F (G)$, \ $g \in G$ and $k \in \mathbb Z$ with $k \ge -\mathsf v_g (S)$, then $S \bdot g^{[k]} \in \mathcal F (G)$ has length $|S| + k$.

Let $S\in \Fc(G)$ be a sequence notated as in \eqref{snot}.
When  $G$ is written multiplicatively,  we use
\[
\pi (S) = \{ g_{\tau (1)} \ldots  g_{\tau (\ell)} \in G :\; \tau\mbox{ a permutation of $[1,\ell]$} \} \subset G
\]
to denote the {\it set of products} of $S$. In view of the basic properties of the commutator subgroup $G'=[G,G]\leq G$, it is readily seen that $$\pi(S)\quad\mbox{ is contained in a $G'$-coset}.$$
Note that $|S| = 0$ if and only if $S$ is trivial, and in this case we use the convention that $\pi (S) = \{1\}$.
When $G$ is written additively with commutative operation, we likewise let $$\sigma(S)=g_1+\ldots+g_\ell\in G$$ denote the \emph{sum} of $S$. More generally,
for any integer $n\geq 0$, the $n$-{\it sums} and \emph{$n$-products} of $S$ are respectfully denoted by
\[
\Sigma_n (S) = \{\sigma(T):\; T \t S\;\und\; |T|=n\}  \subset G \quad\und\quad \Pi_n (S) = \underset{|T|=n}{\bigcup_{T \t S}} {\pi}(T)  \subset G \,,
\]
and the {\it subsequence sums } and \emph{subsequence products} of $S$ are respectively denoted by
\[
\Sigma (S) = \bigcup_{n\geq 1}{\Sigma_n}(T)  \subset G \quad\und\quad \Pi (S) = \bigcup_{n\geq 1}{\Pi_n}(T) \subset G \,.
\]

The sequence $S$ is called
\begin{itemize}
\item a {\it product-one sequence} if $1 \in \pi (S)$,

\item {\it product-one free} if $1 \notin \pi (S)$.
\end{itemize}
Zero-sum and zero-sum free sequences are analogously defined when $G$ is written additively using $\sigma$ in place of $\pi$ and $0$ in place of $1$.
Every map of groups $\varphi \colon G \to H$ extends to  a monoid homomorphism $\varphi \colon \mathcal F (G) \to \mathcal F (H)$ by setting
\[
\varphi (S) =  \varphi (g_1) \bdot \ldots \bdot  \varphi (g_\ell)\in \Fc(H) \,.
\]
If $\varphi$ is a group homomorphism, then $\varphi (S)$ is a product-one sequence if and only if $\pi (S) \cap \Ker ( \varphi) \ne \emptyset$.

We use
\[
\mathcal B (G) = \{ S \in \mathcal F (G) :\; 1 \in  \pi (S) \}
\] to denote
the set of all product-one sequences. Clearly, $\mathcal B (G) \subset \mathcal F (G)$ is a submonoid, hence a commutative, cancellative semigroup with unit element, and we denote by
$\mathcal A (G) = \mathcal A  \big( \mathcal B (G) \big)$ the set of atoms (irreducible elements) of $\mathcal B (G)$. In other words, $\mathcal A (G)$ consists of the minimal product-one sequences, which are the nontrivial, product-one sequences that cannot be factored into two nontrivial, product-one subsequences. We call
\[
\mathsf D (G) = \sup \{ |S| :\; S \in \mathcal A (G) \} \in \mathbb N \cup \{\infty\}
\]
the {\it large Davenport constant} of $G$ and
\[
\mathsf d (G) = \sup \{ |S| :\; S \in \mathcal F (G) \ \text{is product-one free} \} \in \mathbb N_0 \cup \{\infty\}
\]
the {\it small Davenport constant} of $G$.

Suppose that  $G$ is abelian. Then $\mathcal B (G)$ is a Krull monoid (for more on Krull monoids, see \cite{HK98, Fa06a, F-H-K-W06}). More precisely (apart from the trivial case  $|G|=2$), the monoid $\mathcal B (G)$
is (up to isomorphism) the  unique reduced Krull monoid  with class group $G$ in which every class contains exactly one prime divisor. When studying the arithmetic of general Krull monoids $H$ (e.g., of integrally closed, noetherian domains) with class group $G$,  many questions can be reduced to the associated monoid $\mathcal B (G)$ of zero-sum sequences over the class group  \cite[Section 3.4]{Ge-HK06a}. For instance, the large Davenport constant $\mathsf D (G)$ is the supremum over all $k$ for which there exists an atom $u \in H$ which is a product of $k$ prime divisors \cite[Theorem 5.1.5]{Ge-HK06a}. For rings of integers in  algebraic number fields (which are Krull monoids), this was first observed by Rogers in 1963 \cite{Ro63}. Thus, from the very beginning up to the latest applications, it has always been the large Davenport constant which has been at the center of interest. In the abelian case,
a simple argument  (see Lemma \ref{dav-up-lemma}) shows that $\dd(G)+1=\mathsf D(G)$. Thus the small Davenport constant is a sufficient tool
to study the large Davenport constant for abelian groups. For general groups, we only  have the inequality $\mathsf d (G) + 1 \le \mathsf D (G)$,  and hence the study of the large Davenport constant requires additional efforts.

\subsection*{Ordered Sequences Over Groups} These are an important tool used to study (unordered) sequences over non-abelian groups.
Indeed, it is quite useful to have related notation for sequences in which the order of terms matters. Thus we let  $\mathcal F^* (G) = (\mathcal F^* (G), \bdot)$ denote the free (non-abelian) monoid with basis $G$, whose elements will be called the {\it ordered sequences} over $G$. In other terminology, $\mathcal F^* (G)$ is the semigroup of words on the alphabet $G$, and the elements are called words or strings.

Taking an ordered sequence in $\mathcal F^* ( G)$ and considering all possible permutations of its terms gives rise to a natural equivalence class in $\mathcal F^*( G)$, yielding  a natural map $$[\bdot]: \Fc^*( G) \rightarrow \Fc(G)$$ given by abelianizing the sequence product in $\Fc^* (G)$.
An ordered sequence $S^*\in \Fc^*(G)$ with $[S^*]=S$ is called an \emph{ordering} of the sequence $S\in \Fc(G)$.

All notation and conventions for sequences extend naturally to ordered sequences. In particular,  every map of groups $\varphi \colon G \to H$ extends uniquely to a monoid homomorphism $\varphi \colon \mathcal F^* (G) \to \mathcal F^* (H)$ and, for an ordered sequence $S^* \in \mathcal F^* (G)$ with $S=[S^*]$, we set $\mathsf h (S^*) = \mathsf h (S)$, \ $\supp (S^*) = \supp (S)$, \ $|S^*| = |S|$, and $\mathsf v_g (S^*) = \mathsf v_g (S)$ for every $g \in G$.
Let
$$S^* =  g_1  \bdot \ldots \bdot  g_\ell \in \mathcal F^* ( G)$$ be an ordered sequence.
For every subset $I \subset [1, \ell]$, we set
 \be\label{ord-subseq-notation}  S^*(I)=  \bulletprod{i\in I} g_i \in \mathcal F^* ( G),\ee
where the product is taken in the natural order given by $I\subset \Z$, and every sequence of such a form  in $\Fc^*(G)$ is called an {\it (ordered) subsequence} of $S^*$. We use the abbreviation $$S^*(x,y)= S^*([x,y])$$ for integers $x,\,y \in \Z$. If $I = \emptyset$, then $S^*(I) = 1_{\mathcal F^* (G)}$ is the identity of $\mathcal F^* (G)$ (in other words, the empty ordered sequence), and
if $T^*= S^*(I)$ with $I \subset [1, \ell]$ an interval, then we say that  $T^*\in \Fc^*(G)$  is a subsequence of {\it consecutive  terms}, or simply a {\it consecutive  subsequence}, and we indicate this by writing  $T^*\mid S^*$.
If $i\in [1,|S^*|]$, then  $$S^*(i)= S^*([i,i]) \in G\quad\mbox{ denotes the $i$-th term of $S^*$}.$$ Let $\pi \colon \mathcal F^* (G) \to G$ denote the unique homomorphism that maps an ordered sequence onto its product in $G$, so $$\pi(S^*)=\prod_{i=1}^\ell g_i\in G.$$
If $\pi(S^*)=1$, then $S^*$ is called a {\it product-one ordered sequence}.

%

By a {\it factorization} of $S^*\in \Fc^*(G)$ of length $r$, we mean an $r$-tuple $(S^*_1, \ldots, S^*_r)$ of nontrivial, consecutive  subsequences $S^*_i\mid S^*$ such that $S^*=S^*_1\bdot \ldots\bdot S^*_r$. Informally speaking, we may refer to $S^*=S^*_1\bdot \ldots\bdot S^*_r$ as a factorization of $S^*$ as well.
Then, for each $i\in [1,r]$, we have  $S^*_i=S^*(I_i)$ for some $I_i\subset [1,|S|]$ such that
$$\bigcup_{i=1}^rI_i=[1,|S|]\quad\und\quad \max I_j=\min I_{j+1}-1\;\mbox{ for $j\in [1,r-1]$}.$$
Given such a factorization of $S^*$, we can define a new ordered sequence $$T^*= \pi(S^*_1)\bdot\ldots\bdot  \pi(S^*_r)\in \Fc^*(G),$$ so $T^*$ is obtained from $S^*$ by replacing consecutive subsequences with the product of their terms. It is then readily noted that $$\pi(T^*)=\pi(S^*) \quad\und\quad \pi([T^*])\subset \pi([S^*]).$$ Moreover, if $[S^*]\in \mathcal A(G)$ was an atom, then $[T^*]\in \mathcal A(G)$ must remain an atom.

\subsection*{Basic Lemmas Regarding Sequences}

We now prove several basic lemmas and observations that will be needed repeatedly in the paper.
The first two  are rather straightforward but frequently needed.

\begin{lemma}\label{lemma-no-consecutive-prod-1}
Let $G$ be a  group and let $U^* \in \Fc^*(G)$ be an ordered sequence with $\pi(U^*)=1$ and $[U^*]\in \mathcal A(G)$ an atom. Then there are no consecutive, product-one subsequences of $U^*$ that are proper and nontrivial.
\end{lemma}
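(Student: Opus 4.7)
The plan is to argue by contradiction and directly produce the forbidden factorization. Suppose some consecutive subsequence $V^* = U^*(I) \mid U^*$ is nontrivial, proper, and satisfies $\pi(V^*) = 1$, where $I \subset [1,|U^*|]$ is an interval. Since $I$ is an interval, we can factor $U^*$ as consecutive subsequences
\[
U^* = A^* \bdot V^* \bdot B^*,
\]
where $A^* = U^*([1, \min I - 1])$ and $B^* = U^*([\max I + 1, |U^*|])$ (either possibly empty). The goal is to exhibit a factorization of $[U^*]$ into two nontrivial product-one subsequences, contradicting $[U^*]\in \mathcal{A}(G)$.

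First, I would check that $A^* \bdot B^*$ is product-one as an ordered sequence. Since $\pi$ is a homomorphism $\Fc^*(G)\to G$, we have
\[
1 = \pi(U^*) = \pi(A^*)\,\pi(V^*)\,\pi(B^*) = \pi(A^*)\,\pi(B^*) = \pi(A^* \bdot B^*).
\]
Hence $1 \in \pi([A^* \bdot B^*])$ and $1 \in \pi([V^*])$, so both $[A^* \bdot B^*]$ and $[V^*]$ lie in $\mathcal B(G)$.

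Next, I would observe that under the abelianization map $[\bdot]\colon \Fc^*(G)\to \Fc(G)$, concatenation of ordered sequences is sent to the commutative product in $\Fc(G)$, so
\[
[U^*] = [A^*] \bdot [V^*] \bdot [B^*] = [V^*] \bdot [A^* \bdot B^*].
\]
Since $V^*$ is nontrivial, $[V^*] \neq 1_{\Fc(G)}$; since $V^*$ is a proper subsequence of $U^*$, at least one of $A^*, B^*$ is nontrivial, so $[A^* \bdot B^*] \neq 1_{\Fc(G)}$. Thus $[U^*]$ is written as a product of two nontrivial product-one subsequences, contradicting $[U^*] \in \mathcal A(G)$.

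There is no real obstacle here: the only subtlety is being clean about the difference between the ordered product $\pi(\cdot)$ on $\Fc^*(G)$ and the set of products $\pi([\,\cdot\,])$ on $\Fc(G)$, and about what "proper" forces on $A^*\bdot B^*$. Once one notes that a single ordered product equal to $1$ is enough to witness membership in $\mathcal B(G)$ and that $[\bdot]$ is a monoid homomorphism, the argument is a one-line factorization.
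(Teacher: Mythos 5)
Your proof is correct and is essentially the paper's own argument: both remove the consecutive product-one block $V^*$, note that this does not change the product of the remaining ordered sequence, and obtain the factorization $[U^*]=[V^*]\bdot[A^*\bdot B^*]$ into two nontrivial product-one subsequences, contradicting that $[U^*]$ is an atom. Your write-up merely makes the homomorphism step $\pi(U^*)=\pi(A^*)\pi(V^*)\pi(B^*)$ explicit, which the paper leaves as an observation.
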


\begin{proof}Observe that removing a consecutive, product-one subsequence from an ordered sequence does not affect its product. Thus,
if the product-one ordered sequence $U^*$ had a consecutive, product-one subsequence that was proper and nontrivial, say $U^*(I)$ with $I\subset [1,|U^*|]$ an interval, then $[U^*]=[U^*(I)]\bdot [U^*([1,|U^*|]\setminus I)]$ would be a factorization of $[U^*]$ into two nontrivial, product-one subsequences, contradicting that $[U^*]\in \mathcal A(G)$ is an atom.
\end{proof}

\begin{lemma}\label{lemma-G'} Let $G$ be group with $G'=[G,G]\leq G$ its commutator subgroup, and
let $S\in \Fc(G)$ be a product-one sequence. If $T\mid S$ is a subsequence with $\pi(T)\subset G'$, then $\pi(T^{[-1]}\bdot S)\subset G'$. In particular, if $T\mid S$ is a product-one subsequence, then $\pi(T^{[-1]}\bdot S)\subset G'$.
\end{lemma}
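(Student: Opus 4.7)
The plan is to leverage the fact, already recorded in the excerpt, that for any sequence $W\in\Fc(G)$ the set $\pi(W)$ is contained in a single $G'$-coset of $G$. Once this is in hand, proving $\pi(T^{[-1]}\bdot S)\subset G'$ reduces to exhibiting a single product of $T^{[-1]}\bdot S$ that lies in $G'$, because any other product will differ from it only by an element of $G'$.

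First I would observe that since $S$ is product-one, $1\in\pi(S)$, and since $\pi(S)$ is contained in a $G'$-coset, this forces $\pi(S)\subset G'$. Next I would pick any ordering $T^*\in\Fc^*(G)$ of $T$ and any ordering $R^*\in\Fc^*(G)$ of $R:=T^{[-1]}\bdot S$, and concatenate them to produce an ordering $T^*\bdot R^*$ of $S$. Then
\[
\pi(T^*)\,\pi(R^*)=\pi(T^*\bdot R^*)\in \pi(S)\subset G'.
\]
Since $\pi(T^*)\in\pi(T)\subset G'$ by hypothesis, solving gives $\pi(R^*)=\pi(T^*)^{-1}\cdot\pi(T^*\bdot R^*)\in G'$. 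Thus $\pi(R)$ meets $G'$, and because $\pi(R)$ is contained in a single $G'$-coset, that coset must be $G'$ itself, giving $\pi(T^{[-1]}\bdot S)\subset G'$ as desired.

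For the ``in particular'' clause, if $T\mid S$ is product-one then $1\in\pi(T)$; combined with the fact that $\pi(T)$ lies in a single $G'$-coset, this forces $\pi(T)\subset G'$, and the main statement applies.

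I do not expect any genuine obstacle here: the only nontrivial ingredient is the coset property of $\pi(\cdot)$, which follows at once from the definition of $G'=[G,G]$ (permuting two adjacent factors in an ordered product changes the product by a commutator), and is stated earlier in the notation section. The argument is essentially a one-line manipulation once one chooses an ordering placing $T$'s terms first.
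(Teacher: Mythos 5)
Your proof is correct and follows essentially the same route as the paper: both arguments rest on the fact that $\pi(\cdot)$ of any sequence lies in a single $G'$-coset, reduce to showing one product of $T^{[-1]}\bdot S$ lies in $G'$, and cancel the contribution of $T$ (the paper performs this cancellation in the abelianization $G/G'$ via $\phi_{G'}$, while you do it by concatenating orderings of $T$ and $T^{[-1]}\bdot S$ -- a purely cosmetic difference).
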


\begin{proof}
As remarked earlier in the section, we know that every sequence $R\in \Fc(G)$ has $\pi(R)$ contained in a $G'$-coset. In other words, $\phi_{G'}(\pi(R))$ is a single-element, and any product-one sequence $R$ has $1\in \pi(R)\subset G'$. Thus $\pi(S)\subset G'$ and $\pi(T)\subset G'$ follow from our hypotheses and,  consequently,    $$\phi_{G'}(\pi(T^{-1}\bdot S))=\phi_{G'}(\pi(T))^{-1}\phi_{G'}(\pi(S))= \{1\}^{-1} \{1\}=\{1\},$$ which  means $\pi(T^{[-1]}\bdot S)\subset G'$, as desired.
\end{proof}

The next lemma shows that a  product-one ordered sequence can have its terms cyclically shifted while preserving its product.

\begin{lemma}\label{permutaton-lemma}
Let $G$ be a group and let $S= g_1\bdot\ldots\bdot  g_{\ell}\in \Fc^*(G)$ be a product-one ordered sequence. Then  $ g_{j}\bdot\ldots \bdot g_{\ell}\bdot g_1\bdot\ldots\bdot g_{j-1}$ is also an product-one ordered sequence for every $j\in [1,\ell]$.
\end{lemma}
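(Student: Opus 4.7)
The plan is a one-line conjugation argument: split the original product at position $j$, observe that the two halves multiply to $1$ and hence are mutual inverses, then multiply them in the opposite order.

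More precisely, set $x= g_1\cdots g_{j-1}\in G$ and $y= g_j\cdots g_\ell\in G$, so that $\pi(S)=xy$. The hypothesis $\pi(S)=1$ gives $xy=1$, hence $y=x^{-1}$, and therefore $yx=x^{-1}x=1$. Since $yx$ is exactly the product of the cyclically shifted ordered sequence $g_j\bdot\ldots\bdot g_\ell\bdot g_1\bdot\ldots\bdot g_{j-1}$ (by associativity in $G$), the shifted sequence is product-one as claimed. The edge cases $j=1$ (trivial, the sequence is unchanged) and the empty head or tail (where $x$ or $y$ is the empty product $1$) require no special treatment since both conventions are consistent with the definition of $\pi$ on the empty ordered sequence.

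There is essentially no obstacle here: the statement is just the fact that in any monoid, if $xy=1$ and the group inverse exists on one side, then $yx=1$ too, which is automatic in a group. Iterating the single-step case $j=2$ (moving the first term to the end) $j-1$ times would give an alternative proof, but splitting the product into two blocks is cleaner and handles any $j$ in one step.
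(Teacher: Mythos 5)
Your proof is correct. You take a slightly different route from the paper: the paper shows that moving the single last term to the front preserves the product, via the conjugation identity $\pi(g_\ell\bdot g_1\bdot\ldots\bdot g_{\ell-1})=g_\ell\,\pi(S)\,g_\ell^{-1}=1$, and then iterates this single-term shift $\ell-j+1$ times; you instead split the product once at position $j$ into $x=g_1\cdots g_{j-1}$ and $y=g_j\cdots g_\ell$ and use $xy=1\Rightarrow y=x^{-1}\Rightarrow yx=1$, handling every $j$ in one step. The two arguments rest on the same underlying fact (a cyclic shift conjugates the product, or equivalently $xy=1$ implies $yx=1$ in a group), so the difference is purely one of packaging: your block decomposition is shorter and avoids the induction, while the paper's one-step-at-a-time formulation is the version it actually reuses elsewhere (e.g.\ when it says a product-one expression can be cyclically rotated so that a designated term comes first). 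Your treatment of the degenerate cases $j=1$ and the empty block is fine.
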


\begin{proof}
Let $S'= g_{\ell}\bdot g_1\bdot\ldots\bdot  g_{\ell-1}\in \Fc^*(G)$. Since $S$ has product-one, we have $$\pi(S')=g_{\ell}g_1\ldots g_{\ell-1}=g_\ell(g_1\ldots g_{\ell})g_{\ell}^{-1}=g_{\ell}\pi(S)g_{\ell}^{-1}=
g_{\ell} 1 g_{\ell}^{-1}=1.$$ Therefore $S'$ is also a product-one ordered sequence. Iterating this argument ${\ell}-j+1$ times shows that $ g_{j}\bdot\ldots \bdot g_{\ell}\bdot g_1\bdot\ldots\bdot g_{j-1}$ is a product-one ordered sequence, as desired.
\end{proof}

The next lemma
is proved by a standard argument. In particular, the statements for abelian groups are well-known. We provide the full proof so that the reader may become acquainted with the notation.

\begin{lemma}\label{dav-up-lemma}
Let $G$ be a group.
\begin{enumerate}
\item If $G$ is finite, then every ordered sequence $S \in \mathcal F^* (G)$ of length $|S| \ge |G|$ has a consecutive, product-one subsequence that is nontrivial. In particular, we have $\mathsf d (G) + 1 \le \mathsf D (G) \le |G|$.

\item $G$ is finite if and only if $\mathsf d (G)$ is finite.

\item If $G$ is finite abelian, then $\mathsf d (G)+1 = \mathsf D (G)$.

\item If $G$ is finite cyclic, then $\mathsf d (G)+1 = \mathsf D (G) = |G|$.
\end{enumerate}
\end{lemma}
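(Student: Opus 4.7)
The plan is to treat (1) as the foundational tool and bootstrap (3), (4), and the forward direction of (2) from it; the reverse direction of (2) is the main obstacle and requires extra structural input.

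For (1), given $S^*=g_1\bdot\ldots\bdot g_\ell\in \mathcal F^*(G)$ with $\ell\geq |G|$, I would form the partial products $p_i=g_1\cdots g_i\in G$ for $i\in [0,\ell]$. The $\ell+1\geq |G|+1$ values $p_0,\ldots,p_\ell$ must contain a coincidence $p_i=p_j$ with $i<j$, yielding $g_{i+1}\cdots g_j=1$ and hence a nontrivial consecutive product-one subsequence. To bound $\mathsf D(G)\leq |G|$, I would apply this to an atom $U\in \mathcal A(G)$ with an ordering $U^*$ satisfying $\pi(U^*)=1$: if $|U|\geq |G|+1$, the partial products cannot have $(0,|U|)$ as their sole coincidence (else $p_0,\ldots,p_{|U|-1}$ would be pairwise distinct in $G$, forcing $|U|\leq |G|$), so some proper coincidence $(i,j)\neq(0,|U|)$ exists, producing a proper, nontrivial, consecutive product-one subsequence and contradicting Lemma \ref{lemma-no-consecutive-prod-1}. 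For $\mathsf d(G)+1\leq \mathsf D(G)$, take a product-one free sequence $S$ of length $\mathsf d(G)$, pick any ordering $S^*$, and set $g=\pi(S^*)$; then $S^*\bdot g^{-1}$ has product $1$, so $S\bdot g^{-1}\in \mathcal B(G)$, and any factorization of this into two nontrivial product-one subsequences would, after swapping factors if needed, give a nontrivial product-one subsequence of $S$, contradicting product-one freeness.

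For (3) with $G$ abelian, I would take an atom $U\in \mathcal A(G)$ of length $\mathsf D(G)$ and show that $U\bdot g^{[-1]}$ is product-one free for any $g\in \supp(U)$. The key commutative feature is that $\pi(T)$ is a singleton for every $T\in \mathcal F(G)$, so if $T\mid U\bdot g^{[-1]}$ were a nontrivial product-one subsequence, then its complement $U\bdot T^{[-1]}$ would have product $\pi(U)\cdot\pi(T)^{-1}=1$, giving a forbidden factorization of the atom $U$ into two nontrivial product-one subsequences; this proves $\mathsf d(G)\geq \mathsf D(G)-1$, which combined with (1) gives $\mathsf d(G)+1=\mathsf D(G)$. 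For (4) with $G$ cyclic of order $n$, I would exhibit the explicit atom $g^{[n]}$ for a generator $g\in G$: its nontrivial subsequences are exactly $g^{[k]}$ for $k\in [1,n]$, with $\pi(g^{[k]})=\{g^k\}$, and $g^k=1$ forces $k=n$; combined with (1) and (3), the chain $\mathsf d(G)+1\leq \mathsf D(G)\leq |G|$ collapses to equality.

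The main obstacle is part (2). The forward direction is immediate from (1), since $\mathsf d(G)\leq |G|-1$ for finite $G$. For the converse, I would first note that $g^{[\mathsf d(G)+1]}$ must contain a nontrivial product-one subsequence for every $g\in G$, forcing $\ord(g)\leq \mathsf d(G)+1$ and hence $G$ of bounded exponent. Next, for any normal subgroup $N\lhd G$, the inequality $\mathsf d(G/N)\leq \mathsf d(G)$ holds because any sequence of preimages in $G$ of a product-one free sequence in $G/N$ is again product-one free. Applied with $N=G'$ and combined with the observation that any infinite abelian group $A$ has $\mathsf d(A)=\infty$ (use $n$ distinct elements of a maximal independent subset as a product-one free sequence of length $n$), this reduces to ensuring $G/G'$ is finite. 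The delicate remaining task, ruling out infinite non-abelian torsion groups of bounded exponent whose abelianization is finite, is where the real difficulty lies; I would expect a recursive argument along the derived series or another structural device to complete the proof.
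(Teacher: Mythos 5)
Your arguments for Items 1, 3 and 4, and for the easy direction of Item 2, are correct and essentially the same as the paper's (pigeonhole on partial products combined with Lemma \ref{lemma-no-consecutive-prod-1}, the complement trick in the abelian case, and the atom $g^{[|G|]}$ for cyclic $G$). The genuine gap is the reverse direction of Item 2, i.e.\ the claim that an infinite group $G$ has $\mathsf d(G)=\infty$. You reduce this to ``ruling out infinite non-abelian torsion groups of bounded exponent whose abelianization is finite'' and leave that step open, proposing a recursion along the derived series. That route cannot be completed: by the negative solution of the Burnside problem there exist infinite groups of bounded exponent, and among them there are perfect (indeed simple) examples, such as the Tarski monsters, which have exponent $p$ and trivial abelianization. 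For such a group the derived series is constant at $G$, so iterating your reduction $\mathsf d(G/N)\le \mathsf d(G)$ through abelian quotients yields nothing; more fundamentally, the structural facts you extract (bounded exponent, finiteness of the abelianization) are perfectly consistent with $G$ being infinite, so these groups cannot be ``ruled out''---one must instead prove $\mathsf d(G)=\infty$ for them directly, which your reductions do not do.

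The missing idea is an elementary extension argument that works uniformly for every infinite group and needs no structure theory. If $S\in\mathcal F(G)$ is product-one free, then $\Pi(S)$ is a finite set while $G$ is infinite, so one may choose $g\in G$ with $g\notin \Pi(S)$; then $S\bdot g^{-1}$ is again product-one free. Indeed, a nontrivial product-one subsequence of $S\bdot g^{-1}$ must contain the term $g^{-1}$ (since $S$ itself is product-one free); taking an ordering of it with product $1$ and cyclically shifting (Lemma \ref{permutaton-lemma}) so that $g^{-1}$ is the first term exhibits $g$ as a product of the remaining terms, i.e.\ $g\in\Pi(S)$, a contradiction. Iterating produces product-one free sequences of every length, hence $\mathsf d(G)=\infty$ whenever $G$ is infinite. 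This is exactly the paper's argument for Item 2, and it is the step your proposal is missing; the rest of your write-up can stand as is.
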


\begin{proof}
1. Let $S  \in \mathcal F^* (G)$ be an ordered sequence of length $|S| = \ell \ge |G|$. For $j \in [1, \ell]$, we consider the elements $\pi \Big( S (1, j) \Big) \in G$. If $\pi \Big( S (1, j) \Big) = 1$ for some $j \in [1, \ell]$, then $S(1, j)$ is the desired consecutive, product-one subsequence. Otherwise, $\ell=|S| \ge |G|$ together with the pigeonhole principle guarantees that  there are $j,\, k \in [1,\ell]$ with $j < k$ and $\pi \Big( S (1, j) \Big) = \pi \Big( S (1, k) \Big)$, and then $S (j+1, k)$ is the desired consecutive, product-one subsequence.

It is now clear from Lemma \ref{lemma-no-consecutive-prod-1} that $\mathsf D(G)\leq |G|$.
If $S \in \mathcal F (G)$ is product-one free and $g \in \pi (S)$, then $S\bdot g^{-1} \in \mathcal A (G)$, and hence $\mathsf d (G)+1 \le \mathsf D (G)$.

2. By Item 1, it suffices to show that $\dd(G)=\infty$ when $|G|=\infty$.
Suppose that $G$ is infinite and let $S \in \mathcal F (G)$ be product-one free. If we can show that
there is a product-one free sequence of length $|S|+1$, then the assertion follows. Since $G$ is infinite but $|\pi(S)|\leq 2^{|S|}<\infty$, there is an element $g ^{-1}\in G \setminus \Pi(S)$, and we assert that $S \bdot g^{-1}$ is product-one free. Assume to the contrary that $1 \in \Pi (S \bdot g^{-1})$. Then, since $S$ is product-one free, there must exist a product-one subsequence $T\mid S\bdot g^{-1}$ with $g^{-1}\in \supp(T)$. Let $T^*=g_1 \bdot\ldots\bdot  g_{|T|}\in \Fc^*(G)$ be an ordering of $T$ such that $\pi(T^*)=1$. Then, by Lemma \ref{permutaton-lemma}, we can w.l.o.g. assume $T^*(1)=g^{-1}$, i.e., that $g_1=g^{-1}$, whence $g=g_1^{-1}=g_2\ldots g_{|T|}\in \pi(T\bdot (g^{-1})^{[-1]})\subset \Pi(S)$, contradicting that $g \in G \setminus \Pi(S)$. So $S \bdot g^{-1}$ is product-one free as claimed, completing the proof of Item 2.

3. Let $U\in \mathcal A(G)$ be an atom with $|U|=\mathsf D(G)$ and let $g\in \supp(G)$. Now consider $S=U\bdot g^{[-1]}$. Then $|S|=|U|-1=\mathsf D(G)-1$, and it suffices in view of Item 1 to show that $S$ is product-one free. Assuming this fails, then there must be a nontrivial, product-one subsequence $T\mid S$. Since $S\mid U$ is a proper subsequence, this ensures that $T\mid U$ is a proper, nontrivial, product-one subsequence of $U$. However, since $G$ is abelian with $T$ and $U$ both product-one sequences, we have $\pi(T^{[-1]}\bdot U)=\pi(T)\pi(T^{[-1]}\bdot U)=\pi(U)=1$, so that $U=T\bdot (T^{[-1]}\bdot U)$ is a factorization of $U$ into two nontrivial, product-one subsequences, contradicting that $U\in \mathcal A(G)$ is an atom.
Thus $S$ is product-one free, completing the proof of Item 3 as noted above.

4. If  $g \in G$ with $\ord (g) = |G|$, then the sequence $S = g^{[|G|-1]} \in \mathcal F (G)$ is product-one free,  hence  $|G|-1 \le \mathsf d (G)$, and thus the assertion follows from Item 1.
\end{proof}

We are not aware of a finite, non-abelian group with $\mathsf d (G)+1 = \mathsf D (G)$ (see also Lemma \ref{lemma-|G'|=2}).
Next we give a characterization for the large Davenport constant.

\begin{lemma}\label{Big-Dav-lemma}
Let $G$ be a finite group. Then $\mathsf D(G)$ is the smallest integer $\ell \in \mathbb N$ with the following property{\rm \,:}  for every sequence $S\in \Fc(G)$ of length $|S|\geq \ell$ and every $x\in \pi(S)$, there exists a nontrivial, product-one subsequence $T\mid S$ with $x\in \pi({T}^{[-1]}\bdot S)$ and $|T| \le \ell$.
\end{lemma}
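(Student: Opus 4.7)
The plan is to prove the characterization in two directions: first, that $\ell=\mathsf D(G)$ satisfies the stated property, and second, that no $\ell<\mathsf D(G)$ does. Together these identify $\mathsf D(G)$ as the smallest such $\ell$.

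For the first direction, given $S\in\Fc(G)$ with $|S|\geq \mathsf D(G)$ and $x\in\pi(S)$, I would fix an ordering $S^*$ of $S$ with $\pi(S^*)=x$ and consider the unordered sequence $R=S\bdot x^{-1}\in\Fc(G)$. The ordering $S^*\bdot x^{-1}$ has product $xx^{-1}=1$, so $R\in\mathcal B(G)$. Since $|R|=|S|+1>\mathsf D(G)$ exceeds the maximum length of an atom, $R\notin\mathcal A(G)$ and admits a factorization $R=W_1\bdot W_2$ with both $W_i\in\mathcal B(G)$ nontrivial. Choosing $W_1$ to contain at least one occurrence of $x^{-1}$ (possible since $x^{-1}\in\supp(R)$) gives $\vp_{x^{-1}}(W_2)\leq \vp_{x^{-1}}(S)$, and combined with $\vp_g(W_2)\leq \vp_g(R)=\vp_g(S)$ for $g\neq x^{-1}$, this yields $W_2\mid S$. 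To verify $x\in\pi(W_2^{[-1]}\bdot S)$, observe $W_1=W_2^{[-1]}\bdot S\bdot x^{-1}$; take any product-one ordering of $W_1$ and apply Lemma \ref{permutaton-lemma} to cyclically shift the distinguished $x^{-1}$ to the final position. The preceding $|W_1|-1$ terms then form an ordering of $W_2^{[-1]}\bdot S$ whose product equals $x$.

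To enforce the bound $|W_2|\leq\mathsf D(G)$, I would iteratively refine: as long as $|W_2|>\mathsf D(G)$, the sequence $W_2$ is not an atom and factors as $W_2=U_1\bdot U_2$ with both $U_i\in\mathcal B(G)$ nontrivial, and I replace $W_2$ by $U_2$. Clearly $U_2\mid W_2\mid S$, and the condition $x\in\pi(U_2^{[-1]}\bdot S)$ is preserved because $U_2^{[-1]}\bdot S=U_1\bdot(W_2^{[-1]}\bdot S)$: concatenating a product-one ordering of $U_1$ with a product-$x$ ordering of $W_2^{[-1]}\bdot S$ yields an ordering of $U_2^{[-1]}\bdot S$ with product $1\cdot x=x$. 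Since $|U_2|<|W_2|$, the process terminates with $W_2\in\mathcal A(G)$, whence $|W_2|\leq\mathsf D(G)$, and I set $T=W_2$.

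For the second direction, let $U\in\mathcal A(G)$ be an atom with $|U|=\mathsf D(G)$, and take $S=U$ with $x=1\in\pi(U)$. For any $\ell<\mathsf D(G)$, a nontrivial product-one $T\mid S$ with $|T|\leq\ell$ is a proper subsequence of $U$, so the identity $U=T\bdot(T^{[-1]}\bdot U)$ is a partition into two nontrivial subsequences; if $1\in\pi(T^{[-1]}\bdot U)$ held, then $T^{[-1]}\bdot U$ would also be product-one, contradicting $U\in\mathcal A(G)$. Hence no such $T$ satisfies the condition $x\in\pi(T^{[-1]}\bdot S)$, so $\ell$ fails the property.

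The main obstacle is the refinement step: securing $|T|\leq\mathsf D(G)$ while maintaining the condition $x\in\pi(T^{[-1]}\bdot S)$ under iterated factorizations. This hinges on the submonoid structure $\mathcal B(G)\subset\Fc(G)$, which lets product-one orderings be concatenated with multiplication of their products, together with the careful bookkeeping that keeps the adjoined $x^{-1}$ on the discarded side in the very first factorization of $R$.
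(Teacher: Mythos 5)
Your proof is correct and follows essentially the same route as the paper: adjoin $x^{-1}$ to $S$, factor the resulting product-one sequence so that the distinguished $x^{-1}$ lies in the discarded part, and recover $x$ in the complement's product set via the cyclic-shift lemma, your iterated binary factorization being just a reformulation of the paper's one-shot factorization into atoms. The only minor variation is in the minimality direction, where the paper deletes $x^{-1}$ from a maximal atom $U$ and shows no admissible $T$ exists at all for the length-$(\mathsf D(G)-1)$ sequence, while you take $S=U$ with $x=1$ and use the bound $|T|\le \ell<|U|$ to exclude $T=U$; both arguments are valid.
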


\begin{proof}
First we show that $\mathsf D (G)$ has the required property.
Suppose $S\in \Fc(G)$ with $|S|\geq \mathsf D(G)$ and let $x\in \pi(S)$. Then $S\bdot x^{-1}\in \Fc(G)$ is a product-one sequence with length $|S\bdot x^{-1}|=|S|+1>\mathsf D(G)$. Repeatedly applying the definition of $\mathsf D(G)$ to the product-one sequence $S\bdot x^{-1}$ results in a factorization $S\bdot x^{-1}=T_1\bdot \ldots\bdot T_r$ with $T_i\in \mathcal A(G)$ atoms having \be\label{spillitmore}1\leq |T_i|\leq \mathsf D(G)\quad\mbox{ for $i\in [1,r]$}.\ee Since $|S\bdot x^{-1}|>\mathsf D(G)$, it follows that $r\geq 2$. Without restriction, we may assume $x^{-1}\in \supp(T_1)$, and then it is clear that $T_2\bdot\ldots \bdot T_r\mid S$ is a nontrivial, product-one subsequence (in view of $r\geq 2$) with \be\label{cowjumpting}(T_2\bdot\ldots\bdot T_r)^{[-1]}\bdot S=T_1\bdot( x^{-1})^{[-1]}.\ee Since $T_1$ is a product-one sequence, there is an ordering of the terms of $T_1$ having product $1$, say $T_1= x_1\bdot\ldots\bdot  x_n$ with $x_1\ldots x_n=1$. In view of Lemma \ref{permutaton-lemma}, we can cyclically shift the ordering so that $x^{-1}\in \supp(T_1)$ is the first term while preserving that the product of terms is $1$, i.e., we may w.l.o.g. assume $x_1=x^{-1}$. But now it is clear using \eqref{cowjumpting} that $$x=x_1^{-1}=x_2\ldots x_{n-1}\in \pi(T_1 \bdot x_1^{[-1]})=\pi(T_1\bdot (x^{-1})^{[-1]})=\pi((T_2\bdot\ldots\bdot T_2)^{[-1]}\bdot S)\subset \pi(T_2^{[-1]}\bdot S).$$ Thus, in view of \eqref{spillitmore}, it follows that  $T=T_2$ is the desired product-one subsequence of $S$.

To show that $\mathsf D(G)$ is the smallest integer with the desired property, consider an atom $U\in \mathcal A(G)$ with $|U|=\mathsf D(G)$ and an element $x^{-1}\in \supp(U)$, say $U = S \bdot  x^{-1}$ where $S \in \mathcal F (G)$ with $|S|=\mathsf D(G)-1$. Moreover, as argued above using Lemma \ref{permutaton-lemma}, we have $x\in \pi(S)$.
If by contradiction $S$ contained a nontrivial, product-one subsequence $T\mid S$ with $x\in \pi(T^{[-1]}\bdot S)$, then $U=T\bdot \big(T^{[-1]}\bdot S \bdot x^{-1}\big)=T\bdot \big(T^{[-1]}\bdot U\big)$ would be a factorization of $U$ into nontrivial, product-one subsequences, contradicting that $U\in \mathcal A(G)$ is an atom.
\end{proof}





Finally, we  need the concept of a \emph{setpartition}. Let $P$ be a set and let $Q$ be the set of \emph{finite and nonempty} subsets of $P$. The elements of $\mathscr S(P):=\mathcal F (Q)$ are called \emph{setpartitions over $P$}, and an $n$-{\it setpartition}, where $n\geq 0$,  is simply a setpartition $\mathscr A \in\mathscr S(P)$ having length $|\mathscr A| = n$. In other words, an $n$-setpartition $\mathscr A=A_1\bdot\ldots\bdot A_n \in \mathscr S(P)$ is a sequence of $n$ finite and \emph{nonempty} subsets $A_i\subset P$. The setpartition $\mathcal A\in \mathscr S(P)$ naturally partitions the sequence   $$\mathsf S(\mathscr A)=\bulletprod{i\in [1,n]}\bulletprod{a\in A_i} a\in \Fc(P) \,,$$
and $\mathscr A$ is said to have its terms being of \emph{as near equal a size as possible} if $$|A_i|\in \left\{\left\lfloor\frac{|\mathsf S(\mathscr A)|}{n}\right\rfloor,\,\left\lceil \frac{|\mathsf S(\mathscr A)|}{n}\right\rceil\right\}\quad\mbox{ for all $i\in [1,n]$} \,.$$
A sequence $S\in \Fc(P)$ is said to \emph{have an $n$-setpartition} if there is an $n$-setpartition $\mathscr A\in \mathscr S(P)$ with  $\mathsf S(\mathscr A)=S$.
The following is the standard existence result for setpartitions. It can be found in \cite[Proposition 10.2]{Gr13a}
or \cite{B-D-G-L03}.

\begin{lemma}\label{lemma-setpartition-exsitence}
Let $P$ be a set, let $S\in \Fc(P)$ be a sequence over $P$, and let $\ell\geq 0$ and $n\geq 1$ be integers. Then there is a subsequence $S'\mid S$ with $|S'|=\ell+n$ having an $n$-setpartition if and only if
\begin{align*}&|S|\geq \ell+n\quad\mbox{ and, \quad for every nonempty subset $X\subset P$ with $|X|\leq \frac{\ell-1}{n}+1$},\\ &\mbox{there are at most $|S|-\ell+(|X|-1)n$ terms of $S$ from $X$}.\end{align*} Moreover, if this is the case, then $S'$ has an $n$-setpartition with terms of as near equal a size as possible.

In particular, $S$ has an $n$-setpartition if and only if $\mathsf h(S)\leq n\leq |S|$, and if this is the case, then $S$ has an $n$-setpartition with terms of as near equal a size as possible.
\end{lemma}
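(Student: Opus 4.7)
The plan is to treat the two directions separately. For necessity, if $\mathscr A=A_1\bdot\ldots\bdot A_n$ is an $n$-setpartition of some $S'\mid S$ with $|S'|=\ell+n$, then for any nonempty $X\subset P$ the number of $X$-terms in $S'$ equals $\sum_i|A_i\cap X|\leq n|X|$, so the number of $X$-terms in $S$ is at most $n|X|+(|S|-\ell-n)=|S|-\ell+(|X|-1)n$; together with $|S|\geq\ell+n$ this is the stated inequality. (For $|X|>\frac{\ell-1}{n}+1$ the bound already exceeds $|S|$, so in fact the inequality holds for every nonempty $X$.)

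For sufficiency, I would first establish the ``In particular'' clause and then reduce the general statement to it. For the ``In particular'' part, I would prove that $\mathsf h(S)\leq n\leq|S|$ suffices for $S$ to have an $n$-setpartition with as-near-equal sized terms via the classical round-robin argument: list the terms of $S$ as $g_1,\ldots,g_{|S|}$, grouping equal terms into consecutive blocks, each of length at most $\mathsf h(S)\leq n$, and assign $g_i$ to box $A_k$ where $k\in[1,n]$ satisfies $k\equiv i\pmod n$. If $g_i=g_j$ with $i<j$, then they lie in a common block, so $j-i<n$ and they are assigned to distinct boxes; hence each $A_k$ is a set. The resulting box sizes are $\lfloor|S|/n\rfloor$ or $\lceil|S|/n\rceil$, as desired, and necessity of the two inequalities is immediate.

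To reduce the main statement to this, the key observation is that the given $X$-condition extends to \emph{every} nonempty $X\subset P$: when $|X|>\frac{\ell-1}{n}+1$, integrality gives $(|X|-1)n\geq\ell$, so $|S|-\ell+(|X|-1)n\geq|S|$ holds trivially. Applying the extended condition to the distinguished set $X_0:=\{p\in P:\vp_p(S)>n\}$ yields
\[
\sum_{p\in X_0}\bigl(\vp_p(S)-n\bigr)\leq|S|-\ell-n,
\]
whose left-hand side is exactly the minimum number of term-removals required to bring the maximum multiplicity down to $n$. I would first remove $\max(\vp_p(S)-n,0)$ copies of each $p\in X_0$, then strip off further arbitrary terms (if necessary) to obtain $S'\mid S$ of length exactly $\ell+n$ with $\mathsf h(S')\leq n$; the ``In particular'' clause applied to $S'$ then yields the desired $n$-setpartition with near-equal sized terms.

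The hardest part will be the round-robin argument for the ``In particular'' clause (specifically, the sorting step that ensures repeated copies of a single element cannot fall into a common residue class modulo $n$) and the short integrality bookkeeping that extends the $X$-condition past its declared cutoff; once both are in place, the reduction via the distinguished set $X_0$ is routine.
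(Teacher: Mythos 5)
Your proof is correct. Note, though, that the paper does not prove this lemma at all: it is quoted as a known result, with the proof delegated to \cite[Proposition 10.2]{Gr13a} and \cite{B-D-G-L03}, so there is no in-paper argument to compare against. Your self-contained route works: necessity is the right counting ($\sum_{i}|A_i\cap X|\leq n|X|$ plus the $|S|-\ell-n$ discarded terms); the integrality step $(|X|-1)n\geq \ell$ correctly extends the constraint to all nonempty $X$, and applying it to $X_0=\{p:\vp_p(S)>n\}$ gives exactly the bound $\sum_{p\in X_0}(\vp_p(S)-n)\leq |S|-\ell-n$ needed to trim $S$ down to an $S'$ of length $\ell+n$ with $\mathsf h(S')\leq n$ (when $X_0=\emptyset$ this degenerates harmlessly, using only $|S|\geq \ell+n$; worth a sentence in a final write-up); and the round-robin assignment with equal terms in consecutive blocks does produce $n$ nonempty sets of sizes $\lfloor|S'|/n\rfloor$ or $\lceil|S'|/n\rceil$, settling both the ``Moreover'' and the ``In particular'' clauses. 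What your approach buys is a short, elementary proof tailored to the exact statement used here; what the cited sources buy is greater generality (the setpartition machinery there is developed for stronger statements, e.g.\ extending or modifying given setpartitions, via more elaborate augmenting arguments), which is unnecessary for the present application.
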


\section{General Upper Bounds}\label{sec-upper-bounds}

We begin with the following upper bound of Olson and White \cite{Ol-Wh77} for the small Davenport constant.

\begin{theorem}\label{thm-olsonwhite-upper} Let $G$ be a finite, noncyclic group. Then $$\dd(G)\leq \frac12|G|$$ with equality if $G$ contains a cyclic, index $2$ subgroup.
\end{theorem}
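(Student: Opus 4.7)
\medskip

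\noindent\textbf{Proof plan for Theorem \ref{thm-olsonwhite-upper}.}

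For the equality statement, my plan is to give an explicit construction. Suppose $H=\langle h\rangle\leq G$ is cyclic of index $2$, with $n=|H|=|G|/2$, and fix any $g\in G\setminus H$. I would then consider the sequence
\[
S=h^{[n-1]}\bdot g \in \Fc(G), \qquad |S|=n=|G|/2,
\]
and check it is product-one free. Since $H$ is index $2$ it is normal, so $\pi(T)$ lies in a single $H$-coset for every subsequence $T\mid S$. If $g\notin\supp(T)$, then $T=h^{[k]}$ with $1\le k\le n-1$, so $\pi(T)=\{h^k\}\subset H\setminus\{1\}$ as $\ord(h)=n$. If $g\in\supp(T)$, then $T=h^{[k]}\bdot g$ with $0\le k\le n-1$, and $\pi(T)\subset Hg$, which avoids $1$ because $g\notin H$. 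Hence $1\notin\pi(T)$ in either case, establishing $\mathsf d(G)\geq |G|/2$.

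For the upper bound $\mathsf d(G)\leq |G|/2$ when $G$ is noncyclic, I would proceed by induction on $|G|$, with trivial base case. Let $S\in\Fc(G)$ be a product-one free sequence of maximum length $\ell=\mathsf d(G)$, and set $H=\langle\supp(S)\rangle\leq G$.

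If $H\lneq G$ is a proper subgroup, then by Lagrange $|H|\leq|G|/2$, and $S$ is a product-one free sequence over $H$, so $\ell\leq\mathsf d(H)$. If $H$ is cyclic, then $\mathsf d(H)=|H|-1\leq|G|/2-1$ by Lemma \ref{dav-up-lemma}(4); if $H$ is noncyclic, then by the inductive hypothesis $\mathsf d(H)\leq|H|/2\leq|G|/4$. In either situation $\ell<|G|/2$, and we are done.

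The core difficulty is the remaining case $\langle\supp(S)\rangle=G$. The starting data is the partial-product observation: for any ordering $S^*=g_1\bdot\ldots\bdot g_\ell$, the partial products $q_0=1,q_1,\ldots,q_\ell$ are pairwise distinct, since $q_j=q_k$ with $j<k$ would make $S^*(j+1,k)$ a nontrivial, consecutive, product-one subsequence of $S^*$, contradicting Lemma \ref{lemma-no-consecutive-prod-1} applied to $S^*\bdot q_\ell^{-1}$ (or directly contradicting product-one freeness). Thus $|\Pi(S)\cup\{1\}|\geq\ell+1$, which unfortunately only yields the weaker bound $\ell\leq|G|-1$. To improve this to $\ell\leq|G|/2$, my plan is to pair each $q\in\Pi(S)$ with $q^{-1}$ and observe that if $q=\pi(T^*)$ and $q^{-1}=\pi(T'^*)$ with $T$ and $T'$ disjoint subsequences of $S$, then $T\bdot T'$ would admit a product-one ordering (using Lemma \ref{permutaton-lemma} to place $T$ before $T'$), contradicting product-one freeness. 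This forces $T$ and $T'$ to share a term for every such pair, which should give, together with the full set of partial products over all cyclic reorderings furnished by Lemma \ref{permutaton-lemma}, a lower bound of roughly $2\ell$ on $|G|$. The main obstacle I anticipate is organizing this pairing/overlap argument cleanly enough to obtain exactly the threshold $|G|/2$, and to verify that the exceptional structures forcing equality along the way are precisely those in which $\langle\supp(S)\rangle=G$ is noncyclic; the noncyclicity hypothesis must enter precisely at the step where one would otherwise be stuck with the cyclic extremal example $g^{[|G|-1]}$.
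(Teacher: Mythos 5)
Your lower-bound construction is correct and is the standard one: $S=h^{[n-1]}\bdot g$ with $H=\la h\ra$ of index $2$ and $g\notin H$ is product-one free (every subsequence either has product $h^k\neq 1$ or has product in the nontrivial coset of the normal subgroup $H$), so $\mathsf d(G)\geq \frac12|G|$. Your reduction of the upper bound to the case $\la\supp(S)\ra=G$ is also sound. Note, however, that the paper does not prove this theorem at all; it quotes it from Olson and White \cite{Ol-Wh77}, so what you are really attempting is a reproof of that external result, and this is where the proposal has a genuine gap.

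The gap is the entire core case $\la\supp(S)\ra=G$ noncyclic, which is the actual content of the Olson--White theorem, and the pairing sketch you offer does not close it. Distinctness of the partial products only gives $|\Pi(S)|\geq \ell$, hence $\ell\leq |G|-1$; to reach $\ell\leq\frac12|G|$ one needs, in effect, the Olson--White inequality $|\Pi(S)|\geq 2|S|-1$ for product-one free $S$ with $\la\supp(S)\ra$ non-cyclic, and your argument supplies no mechanism for producing the missing $\ell-1$ elements. The observation that witnesses of $q$ and $q^{-1}$ must share a term is correct but is a statement about intersecting subsequences, not a counting statement about $G$: it does not force $q^{-1}\notin\Pi(S)$, nor does it make the sets $\{q_j\}$ and $\{q_j^{-1}\}$ disjoint (in the extremal dihedral example these sets overlap heavily), so no bound of the form $2\ell\lesssim|G|$ follows. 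The appeal to Lemma~\ref{permutaton-lemma} is also misplaced: that lemma applies only to product-one ordered sequences, whereas $S^*$ is not product-one; cyclic shifts of $S^*$ merely translate the partial products by $q_j^{-1}$, and you give no argument that the union over shifts is large or that noncyclicity enters at any identifiable step. The known proof proceeds instead by induction on $|S|$ with a careful case analysis showing how adjoining a term to a product-one free sequence over a non-cyclic generated subgroup adds at least two new subsequence products; some such inductive engine (or a substitute, e.g.\ a Kneser/stabilizer-type argument) is indispensable and is absent from the proposal. As written, the proposal proves the equality direction but not the upper bound, so the theorem is not established.
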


The following gives an inductive upper bound for the large Davenport constant. We are indebted to an anonymous referee for having suggested the key idea at the heart of its proof.

\begin{theorem}\label{thm-induc-bound}
Let $G$ be a finite group and let $H\leq G$ be a subgroup. Then $$\mathsf D(G)\leq \mathsf D(H) |G:H|.$$
\end{theorem}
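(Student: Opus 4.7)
The plan is to bound the length of any atom $U\in \mathcal A(G)$ by a block-collapse argument. Fix an ordering $U^* = g_1\bdot \ldots \bdot g_\ell\in \Fc^*(G)$ of $U$ with $\pi(U^*)=1$, and consider the partial products $p_j := g_1\cdots g_j\in G$ for $j\in [0,\ell]$, so $p_0 = p_\ell = 1$. The guiding observation is that the consecutive subsequence $U^*(a+1,b)$ has product $p_a^{-1}p_b$, which lies in $H$ precisely when $p_a$ and $p_b$ lie in the same left coset of $H$ in $G$.

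Classifying $p_1, \ldots, p_\ell$ by their left coset modulo $H$ distributes them among the $|G:H|$ cosets, so by pigeonhole some coset $xH$ contains at least $r := \lceil \ell/|G:H|\rceil$ of them. Next I would cyclically shift $U^*$ so that this maximal coset becomes $H$ itself: pick any $t\in [0,\ell]$ with $p_t\in xH$ and set $V^* := U^*(t+1,\ell)\bdot U^*(1,t)$. Lemma \ref{permutaton-lemma} gives $\pi(V^*)=1$, and a direct computation using $p_\ell = 1$ shows that the $j$-th partial product of $V^*$ equals $p_t^{-1}p_{t+j}$ for $j\in [1,\ell-t]$ and $p_t^{-1}p_{j-\ell+t}$ for $j\in [\ell-t+1,\ell]$; in either case it lies in $H$ if and only if the corresponding original $p_i$ lies in $xH$. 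Replacing $U^*$ by $V^*$ (another ordering of the same unordered $U$), we may therefore assume that at least $r$ of the indices $j\in [1,\ell]$ satisfy $p_j\in H$; list them as $j_1<\ldots<j_r=\ell$ and set $j_0 := 0$.

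The indices $0=j_0<\ldots<j_r=\ell$ partition $U^*$ into $r$ consecutive blocks, each with product $h_s := p_{j_{s-1}}^{-1}p_{j_s}\in H$. Form the collapsed ordered sequence $T^* := h_1\bdot \ldots \bdot h_r\in \Fc^*(H)$; then $\pi(T^*) = p_0^{-1}p_{j_r}=1$, so $T^*$ is product-one. By the elementary fact recorded at the end of the Ordered Sequences subsection (replacing consecutive blocks of an atom by their products preserves atomicity), $[T^*]\in \mathcal A(G)$. Since every term of $T^*$ lies in $H$, any factorization of $[T^*]$ into two nontrivial product-one subsequences in $\mathcal B(H)$ would also be such a factorization in $\mathcal B(G)$; hence $[T^*]\in \mathcal A(H)$, yielding $r = |T^*|\le \mathsf D(H)$.

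Combining $\lceil \ell/|G:H|\rceil \le r\le \mathsf D(H)$ forces $\ell \le \mathsf D(H)\cdot |G:H|$, and since this holds for every $U\in \mathcal A(G)$, we conclude $\mathsf D(G)\le \mathsf D(H)\cdot |G:H|$. The most delicate step is the cyclic-shift bookkeeping that aligns the maximal coset $xH$ with $H$; the atom preservation under block collapse is essentially already recorded in the paper, and the remainder is a straightforward pigeonhole count.
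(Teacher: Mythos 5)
Your proposal is correct and follows essentially the same route as the paper's proof: pigeonhole the partial products into left $H$-cosets, cyclically shift so the popular coset becomes $H$ itself, collapse the resulting consecutive blocks (each with product in $H$) and use atomicity over $H$ to bound the number of blocks. The only difference is packaging — you argue directly that the collapsed sequence is an atom of $\mathcal B(H)$ of length at least $\lceil |U|/|G:H|\rceil$, whereas the paper assumes $|U|>\mathsf D(H)|G:H|$ and derives a contradiction by factoring the collapsed sequence over $H$ and lifting the factorization back to $U$.
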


\begin{proof}The proof is similar to that of Lemma \ref{dav-up-lemma}.
We need to show that $|U|\leq \mathsf D(H) |G:H|$ for all $U\in \mathcal A(G)$.
Assume by contradiction that there is some $U\in \mathcal A(G)$ with $|U|>\mathsf D(H) |G:H|$. Since $U\in \mathcal A(G)$, there exists a product-one ordered sequence $U^*\in \Fc^*(G)$ with $[U^*]=U$.


For every $j \in [1, |U|]$, we consider the elements $\pi \Big( U^* (1, j) \Big) \in G$.
Since $|U|>\mathsf D(H) |G:H|$, the pigeonhole principle guarantees that there exists some left $H$-coset, say $gH$, for which $\pi \Big( U^* (1, j) \Big) \in gH$ holds for at least $\mathsf D(H)+1$ values of $j \in [1, |U|]$. Let $j_1<j_2<\ldots <j_r$, where $r\geq \mathsf D(H)+1$, be  all those indices $j_i \in [1, |U|]$ with $\pi \Big( U^* (1, j_i) \Big) \in gH$. Our next goal is to show that, by cyclically shifting the ordered sequence $U^*$, we can w.l.o.g. assume $j_r=|U|$.

Consider the ordered sequence ${U'}^*=U^*(j_r+1,|U|)\bdot U^*(1,j_r)\in \Fc^*(G)$. Clearly, we have $[{U'}^*]=[U^*]=U$. However, we also have
\begin{align*}\pi\Big({U'}^*\Big)&=
\pi\Big(U^*(j_r+1,|U|)\Big)\pi\Big(U^*(1,j_r)\Big)=
\pi\Big(U^*(j_r+1,|U|)\Big)\pi\Big(U^*\Big)
\pi\Big(U^*(j_r+1,|U|)\Big)^{-1}\\
&=\pi\Big(U^*(j_r+1,|U|)\Big)1 \pi\Big(U^*(j_r+1,|U|)\Big)^{-1}=1.
\end{align*} Thus ${U'}^*$ is a product-one ordered sequence with $[{U'}^*]=U$. Moreover, letting $s=|U^*(j_r+1,|U^*|)|=|U^*|-j_r$ and $g'=\pi\Big(U^*(j_r+1,|U^*|)\Big)$, we see (in view of the definition of the $j_i$) that
$$\pi \Big({U'}^*(1,j_i+s)\Big)=g'\pi\Big(U(1,j_i)\Big)\in
g'gH\quad\mbox{ for all $i\in [1,r]$}.$$ Consequently, repeating the above arguments using the ordered sequence ${U'}^*$ in place of $U^*$ allows us to w.l.o.g. assume $j_r=|U^*|$. But then $1=\pi(U^*)=\pi\Big(U^*(1,|U^*|)\Big)=\pi\Big(U^*(1,j_r)\Big)\in gH$ forces  $gH=H$.
Thus we now have  \be\label{seenat}\pi\Big( U^* (1, j_i) \Big) \in H\quad\mbox{ for $i\in [1,r]$}.\ee

Let $U^*_i=U^*(j_{i-1}+1,j_i)\in\Fc^*(G)$ for $i\in [1,r]$, where $j_0:=0$. Since $j_r=|U|$, we have \be\label{stillhop}U^*_1\bdot\ldots\bdot U^*_r=U^*.\ee In view of \eqref{seenat}, we have $$\pi(U^*_1), \,\pi(U^*_1\bdot U^*_2),\,\pi(U^*_1\bdot U^*_2\bdot U^*_3),\ldots,\pi(U^*_1\bdot\ldots\bdot U^*_r)\in H.$$ A simple inductive argument now shows \be\label{seenet}\pi(U^*_i)\in H\quad\mbox{ for all $i\in [1,r]$}.\ee

In view of \eqref{stillhop} and \eqref{seenet}, consider the sequence $S= \pi(U^*_1)\bdot\ldots \bdot  \pi(U^*_r)\in \Fc(H)$. Since $\pi(U^*_1)\ldots \pi(U^*_r)=\pi(U^*_1\bdot\ldots\bdot U^*_r)=\pi(U^*)=1$, we see that $S\in \mathcal B(H)$. However, since $|S|=r\geq \mathsf D(H)+1$, the definition of $\mathsf D(H)$ ensures that we have some factorization of $S$, say $$S=\bulletprod{i\in I} \pi(U^*_i)\bulletprod{i\in [1,r]\setminus I} \pi(U^*_i),$$
where $I\subset [1,r]$, with both $\bulletprod{i\in I} \pi(U^*_i)$ and $\bulletprod{i\in [1,r]\setminus I} \pi(U^*_i)$ nontrivial, product-one sequences over $H\leq G$. But then it is clear that both $\left[\bulletprod{i\in I} U^*_i\right]$ and $\left[\bulletprod{i\in [1,r]\setminus I}U^*_i\right]$ are nontrivial, product-one sequences over $G$, whence the factorization (in view of \eqref{stillhop}) $$U=[U^*]=\left[ \bulletprod{i\in I} U^*_i\right]\bdot \left[\bulletprod{i\in [1,r]\setminus I}U^*_i\right]$$ contradicts that $U\in \mathcal A(G)$ is an atom, completing the proof.
\end{proof}

A similar argument to that of Theorem \ref{thm-induc-bound} gives the following result.

\begin{theorem}\label{thm-induc-bound-extra}
Let $G$ be a finite group and let $H\lhd G$ be a normal subgroup with $H\cap G'=\{1\}$, where $G'=[G,G]\leq G$ is the commutator subgroup of $G$. Then $$\mathsf D(G)\leq \mathsf D(H) \mathsf D(G/H).$$
\end{theorem}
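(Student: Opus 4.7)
The plan is to adapt the strategy of Theorem \ref{thm-induc-bound}: produce many nontrivial ordered pieces $W_i^*$ of $U$ whose individual products lie in $H$, and then apply the definition of $\mathsf D(H)$ to the sequence of these products to obtain a forbidden factorization of $U$. The difference is that the previous pigeonhole argument in the left-coset partition (which yielded $|G:H|$) is replaced here by a Davenport factorization of $\phi_H(U)$ in the quotient $G/H$, and the hypothesis $H\cap G'=\{1\}$ will be invoked at a crucial ``gluing'' step.

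Suppose for contradiction that there is $U\in \mathcal A(G)$ with $|U|> \mathsf D(H)\mathsf D(G/H)$. Any product-one ordering $U^*$ of $U$ pushes down to an ordering $\phi_H(U^*)$ of $\phi_H(U)$ with $\pi(\phi_H(U^*))=\phi_H(\pi(U^*))=1$, so $\phi_H(U)\in\mathcal B(G/H)$. Repeatedly splitting off nontrivial product-one subsequences yields a factorization $\phi_H(U)=T_1\bdot\ldots\bdot T_r$ with each $T_i\in \mathcal A(G/H)$. Since $|T_i|\le \mathsf D(G/H)$, we have $r\ge |U|/\mathsf D(G/H)>\mathsf D(H)$. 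Lift this to $U=W_1\bdot\ldots\bdot W_r$ with $\phi_H(W_i)=T_i$; for each $i$ choose an ordering $T_i^*$ of $T_i$ with $\pi(T_i^*)=1$ (in $G/H$), and then an ordering $W_i^*$ of $W_i$ with $\phi_H(W_i^*)=T_i^*$. Then $\phi_H(\pi(W_i^*))=\pi(T_i^*)=1$, whence $\pi(W_i^*)\in \Ker(\phi_H)=H$ for every $i\in [1,r]$.

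The key step, where the hypothesis $H\cap G'=\{1\}$ enters, is to show that $\pi(W_1^*)\pi(W_2^*)\cdots\pi(W_r^*)=1$ in $G$. Since $W_1^*\bdot\ldots\bdot W_r^*$ is an ordering of $U$, this product lies in $\pi(U)$, and because $U$ is product-one, $\pi(U)$ is contained in the $G'$-coset containing $1$, i.e., $\pi(U)\subset G'$. On the other hand, each $\pi(W_i^*)\in H$ and $H\lhd G$, so the product lies in $H$. Thus it lies in $H\cap G'=\{1\}$, as claimed. Consequently, $S:=\pi(W_1^*)\bdot\ldots\bdot\pi(W_r^*)\in \mathcal F(H)$ is a product-one sequence over $H$ of length $r>\mathsf D(H)$.

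By the definition of $\mathsf D(H)$, the sequence $S$ factors as $S=S_1\bdot S_2$ with both $S_1$ and $S_2$ nontrivial, product-one subsequences; equivalently, there is a partition $[1,r]=I\sqcup I'$ with $I,I'\neq\emptyset$ such that $\bulletprod{i\in I}\pi(W_i^*)$ and $\bulletprod{i\in I'}\pi(W_i^*)$ are each product-one in $H$. Using the permutations of $I$ and $I'$ that realize these product-one orderings to concatenate the corresponding blocks $W_i^*$ yields orderings of $\bulletprod{i\in I}W_i$ and $\bulletprod{i\in I'}W_i$ whose products in $G$ both equal $1$. Therefore $U=\bulletprod{i\in I}W_i\bdot\bulletprod{i\in I'}W_i$ is a factorization of $U$ into two nontrivial, product-one subsequences, contradicting $U\in \mathcal A(G)$. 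The main obstacle is precisely the gluing step in the previous paragraph: without the hypothesis $H\cap G'=\{1\}$, the product $\pi(W_1^*)\cdots\pi(W_r^*)$ is only known to lie in $H$, and the sequence $S$ need not be product-one, so the inductive application of $\mathsf D(H)$ would fail.
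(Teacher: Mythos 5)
Your proposal is correct and follows essentially the same route as the paper: decompose $U$ into $r>\mathsf D(H)$ nontrivial pieces whose ordered products lie in $H$, use $\pi(U)\subset G'$ together with $H\cap G'=\{1\}$ to see that the sequence of these products is product-one over $H$, and then apply the definition of $\mathsf D(H)$ to split it and contradict that $U$ is an atom. The only cosmetic difference is that you obtain the pieces by factoring $\phi_H(U)$ into atoms of $\mathcal B(G/H)$, whereas the paper repeatedly applies its Lemma \ref{Big-Dav-lemma} with $x=1$; both give $r>\mathsf D(H)$ blocks with $\pi(W_i)\cap H\neq\emptyset$, so the arguments coincide in substance.
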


\begin{proof}
Assume by contradiction that there is some atom $U\in \mathcal A(G)$ with $|U|>\mathsf D(H) \mathsf D(G/H)$.
Since $U$ is a product-one sequence, we have $1\in \pi(U)\subset G'$. Since $|U|>\mathsf D(H) \mathsf D(G/H)$, repeatedly applying   Lemma \ref{Big-Dav-lemma} to the product-one sequence $\phi_H(U)\in\Fc(G/H)$ taking $x=1$ each time yields a factorization $$U=U_1\bdot\ldots\bdot U_r\quad\mbox{ with }\quad \pi(U_i)\cap H\neq \emptyset\quad\mbox{ for $i\in [1,r]$ } \quad\und \quad r> \mathsf D(H).$$
Since $\pi(U_i)\cap H\neq \emptyset$ for $i\in [1,r]$, it follows that each $U_i\in \Fc(G)$ has an ordering $U^*_i\in \Fc^*(G)$, so $[U^*_i]=U_i$, such that $\pi(U^*_i)\in H$.
As a result, $\pi(U^*_1)\ldots \pi(U^*_r)\in H$. However, we also have $$\pi(U^*_1)\ldots \pi(U^*_r)\in \pi([U_1^*\bdot\ldots\bdot U^*_r])=\pi(U)\subset G'.$$ Thus, in view of the hypothesis $H\cap G'=\{1\}$, it follows that
$\pi(U^*_1)\ldots \pi(U^*_r)=1$. But this shows that  $$U':= \pi(U^*_1)\bdot\ldots\bdot \pi(U^*_r)\in \Fc(H)$$ is a product-one sequence of length $r>\mathsf D(H)$. Consequently, the definition of $\mathsf D(H)$ ensures that there is a factorization
\[
U'=\left(\bulletprod{i\in I} \pi(U^*_i)\right)\bdot \left(
\bulletprod{i\in [1,r]\setminus I} \pi(U^*_i)\right)
\]
 with $\left(\bulletprod{i\in I} \pi(U^*_i)\right)$ and $\left(
\bulletprod{i\in [1,r]\setminus I} \pi(U^*_i)\right)$ both nontrivial, product-one subsequences of $U'$, where $I\subset [1,r]$. But then $U=\left[\bulletprod{i\in I}U^*_i\right]\bdot \left[
\bulletprod{i\in [1,r]\setminus I}U^*_i\right]$ is a factorization of $U$ into $2$ nontrivial, product-one subsequences, contradicting that $U\in \mathcal A(G)$ is an atom.
\end{proof}

Next, we give an upper bound in the case when $G$ is nearly abelian.

\begin{lemma}\label{lemma-|G'|=2}
Let $G$ be a finite group with commutator subgroup $G'=[G,G]\leq G$. Suppose $|G'|\leq 2$. Then
$$\mathsf D(G)\leq \mathsf d(G)+|G'|.$$
\end{lemma}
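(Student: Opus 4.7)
The plan is to split into the cases $|G'|=1$ and $|G'|=2$. If $|G'|=1$ then $G$ is abelian and the statement is immediate from Lemma \ref{dav-up-lemma}(3). So assume $|G'|=2$; since any normal subgroup of order two is central, we may write $G'=\{1,t\}$ with $t\in \mathsf Z(G)$ an involution. The strategy is a proof by contradiction: take $U\in\mathcal A(G)$ with $|U|=\mathsf D(G)$ and suppose $|U|\geq \mathsf d(G)+3$.

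First I would dispose of the case where the terms of $U$ pairwise commute. Then $\supp(U)$ is contained in some abelian subgroup $H\leq G$, and $U$ remains an atom in $\mathcal A(H)$ (any factorization in $\mathcal B(H)$ is automatically one in $\mathcal B(G)$). Lemma \ref{dav-up-lemma}(3) applied to $H$, together with the trivial inequality $\mathsf d(H)\leq \mathsf d(G)$ (a product-one free sequence over $H$ is still product-one free over $G$), yields $|U|\leq \mathsf D(H)=\mathsf d(H)+1\leq \mathsf d(G)+1$, contradicting our assumption.

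Hence the support of $U$ contains two non-commuting elements $x,y$, necessarily with $[x,y]=t$. Consider $S=U\bdot x^{[-1]}\bdot y^{[-1]}$, which has length $|U|-2\geq \mathsf d(G)+1>\mathsf d(G)$, so by definition of the small Davenport constant $S$ admits a nontrivial product-one subsequence $T$. A multiplicity check ($\mathsf v_x(T)\leq \mathsf v_x(S)=\mathsf v_x(U)-1$, and similarly for $y$) shows that both $x$ and $y$ still appear in the complement $R:=T^{[-1]}\bdot U$. Because $U$ is an atom, $R$ cannot itself be product-one (otherwise $U=T\bdot R$ would split into two nontrivial, product-one subsequences), and Lemma \ref{lemma-G'} forces $\pi(R)\subseteq G'=\{1,t\}$; together these give $\pi(R)=\{t\}$ as a single-element set.

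The main obstacle---and the punchline---is to derive a contradiction from $\pi(R)=\{t\}$ using the non-commuting pair $x,y\in \supp(R)$. Since $G'$ is central of order two, any single adjacent transposition in an ordering of $R$ changes the product by an element of $G'$, that is, by $1$ or $t$. Choosing an ordering of $R$ in which a fixed copy of $x$ sits immediately before a fixed copy of $y$ and then swapping just those two terms produces two orderings whose products differ precisely by $[x,y]=t\neq 1$. Hence $|\pi(R)|\geq 2$, contradicting $\pi(R)=\{t\}$. This forces $|U|\leq \mathsf d(G)+2=\mathsf d(G)+|G'|$, which is the desired bound.
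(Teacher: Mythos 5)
Your proof is correct and follows essentially the same route as the paper: dispose of the pairwise-commuting case via the abelian result, pick a non-commuting pair $x,y$, remove it, use $\mathsf d(G)$ to find a nontrivial product-one subsequence of the remainder, and then exploit the pair surviving in the complement to force its product set to be all of $G'$ (your adjacent-swap computation is exactly the mechanism behind the paper's observation that $|\pi(x\bdot y)|=2=|G'|$). The only difference is presentational: the paper concludes $1\in\pi(R)$ and exhibits the forbidden factorization directly, while you first use atomicity to pin $\pi(R)=\{t\}$ and then contradict that—logically the same argument.
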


\begin{proof} If $|G'|=1$, then $G$ is abelian and $\mathsf d(G)+1=\mathsf D(G)$ holds by Lemma \ref{dav-up-lemma}.  Therefore we may assume $|G'|=2$.
Assume by contradiction that we have an atom  $U\in \mathcal A(G)$  with \be\label{longer}|U|=\mathsf D(G)\geq \mathsf d(G)+|G'|+1=\mathsf d(G)+3.\ee

If all the terms of $U$ commute with each other, then $\supp(U)$ generates an abelian group, whence $$|U|\leq \mathsf D(\la \supp(U)\ra)=\dd(\la \supp(U)\ra)+1\leq \dd(G)+1,$$ contrary to \eqref{longer}. Therefore we may assume there are terms $x,\,y\in \supp(U)$ which do not commute with each other: $xy\neq yx$. Let $T= x\bdot y\in \Fc(G)$ be the subsequence consisting of these $2$ terms. Since the terms of $T$ do not commute with each other, we have  $|\pi(T)|=2=|G'|$, and since $\pi(T)$ must be contained in a $G'$-coset (as noted in Section \ref{sec-notation}), this ensures that $\pi(T)$ is an entire $G'$-coset. In view of  \eqref{longer}, we have  $|T^{[-1]}\bdot U|=|U|-2\geq \mathsf d(G)+1$. Thus the definition of $\mathsf d(G)$ ensures that there is a nontrivial, product-one subsequence $R\mid T^{[-1]}\bdot U$. From Lemma \ref{lemma-G'}, we know that $\pi(R^{[-1]}\bdot U)\subset G'$. Thus, since $|\pi(T)|=|G'|$ with $T\mid R^{[-1]}\bdot U$, we conclude that $\pi(R^{[-1]}\bdot U)=G'$. In particular, $1\in\pi(R^{[-1]}\bdot U)$, meaning $R^{[-1]}\bdot U$ is also a product-one subsequence, which is nontrivial in view of $T\mid R^{[-1]}\bdot U$ and $|T|=2$. But now $U=R\bdot (R^{[-1]}\bdot U)$ is a factorization of $U$ into $2$ nontrivial, product-one subsequences, contradicting that $U\in \mathcal A(G)$ is an atom.
\end{proof}


\section{Some Tools from Additive Theory}\label{sec-add}

In this section, we present the results from Additive Theory needed for Theorem \ref{thm-index2}.
To simplify notation, all groups in this section will be abelian and written additively.
We begin with the classical Cauchy-Davenport Theorem \cite[Theorem 6.2]{Gr13a} .

\begin{theorem}[Cauchy-Davenport Theorem]
Let $G$ be an abelian group of prime order $p$ and let $A_1,\ldots,A_n\subset G$ be nonempty subsets. Then
$$|\Sum{i=1}{n}A_i|\geq \min\{p,\; \Sum{i=1}{n}|A_i|-n+1\}.$$
\end{theorem}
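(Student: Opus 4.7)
The plan is to proceed by induction on $n$, with the substance concentrated in the two-summand case $n=2$. The case $n=1$ is trivial. Once the inequality $|A+B| \geq \min\{p,\, |A|+|B|-1\}$ is established for all nonempty $A, B \subset G$, the inductive step at stage $n$ applies this bound to $C := A_1 + \ldots + A_{n-1}$ and $A_n$; chaining the inner hypothesis $|C| \geq \min\{p,\, \sum_{i<n}|A_i| - n + 2\}$ with the two-set bound, and splitting on whether $|C| = p$ or not, yields the claimed inequality.

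For the two-set case, I would first dispose of the range $|A|+|B| > p$: for any $g \in G$, the sets $g - A$ and $B$ together contain more than $p = |G|$ elements and hence must intersect, giving $g \in A+B$, so $A+B = G$. Assuming then that $|A|+|B| \leq p$, the argument proceeds by a second induction, this time on $|B|$, with the case $|B| = 1$ being immediate. For the inductive step, I would invoke Dyson's $e$-transform: given $e \in G$, set $A_e := A \cup (B+e)$ and $B_e := (A-e) \cap B$. Inclusion--exclusion gives $|A_e| + |B_e| = |A| + |B|$, and a direct case analysis on whether a summand from $A_e$ lies in $A$ or in $B+e$ shows $A_e + B_e \subset A+B$. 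If I can choose $e$ so that the pair $(A_e, B_e)$ is both nonempty and satisfies $|B_e| < |B|$, the inductive hypothesis applied to this smaller pair yields $|A+B| \geq |A_e + B_e| \geq \min\{p,\, |A|+|B|-1\}$.

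The main obstacle is producing such an $e$. Selecting $e \in A - B$ ensures $B_e \neq \emptyset$, and the condition $|B_e| < |B|$ amounts to the existence of some $b \in B$ with $b + e \notin A$. If no such $e$ existed in $A - B$, then $B + e \subset A$ for every $e \in A - B$; fixing $b_0 \in B$, this forces $B - b_0 \subset \mathrm{Stab}(A) = \{h \in G : A + h = A\}$. Since $|G| = p$ is prime, $\mathrm{Stab}(A)$ is either trivial or all of $G$; the latter forces $A = G$, contradicting $|A| + |B| \leq p$ and $|B| \geq 1$, while the former forces $|B| = 1$, contradicting the inductive hypothesis $|B| \geq 2$. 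This is precisely the spot where primality of $p$ is essential, and indeed the theorem fails without it, as witnessed by subgroup examples in cyclic groups of composite order.
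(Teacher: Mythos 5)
Your proof is correct, and it is worth noting that the paper itself offers no argument for this statement: the Cauchy--Davenport Theorem is quoted there as a classical tool, with a citation to \cite[Theorem 6.2]{Gr13a}, and is then applied in Lemmas \ref{lemma-lotsofalplha} and \ref{lemma-dicyclic} and in the proof of Theorem \ref{thm-index2}. What you have written is the standard self-contained proof: the reduction of the $n$-fold statement to the two-set case by induction on $n$ (with the easy split on whether the partial sumset is already all of $G$), the disposal of the range $|A|+|B|>p$ by the pigeonhole intersection of $g-A$ with $B$, and then the Dyson $e$-transform $A_e=A\cup(B+e)$, $B_e=(A-e)\cap B$, which preserves $|A_e|+|B_e|=|A|+|B|$ and satisfies $A_e+B_e\subset A+B$, driving an induction on $|B|$. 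Your treatment of the only delicate point is also right: choosing $e\in A-B$ guarantees $B_e\neq\emptyset$, and if no such $e$ shrinks $B$, then $B-b_0$ lies in the stabilizer $\mathsf H(A)$, which by primality of $p$ is trivial or all of $G$, both of which are incompatible with $|B|\geq 2$ and $|A|+|B|\leq p$; this is exactly where primality enters, as your subgroup counterexamples for composite order indicate. So your proposal supplies a complete elementary proof where the paper relies on the literature; the only trade-off is length versus the paper's economy of simply invoking a standard reference.
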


Next, we state the following special case of either the DeVos-Goddyn-Mohar Theorem  or the Partition Theorem (see \cite[Chapters 13 and 14]{Gr13a} or \cite{De-Go-Mo09a}).

\begin{theorem}\label{DGM-devos-etal-thm}
Let $G$ be an abelian group, let $S\in \Fc(G)$ be a sequence, let  $n\in [1,|S|]$, and let  $H=\HH(\Sigma_n(S))$. Then \be\label{DGM-bound}|\Sigma_n(S)|\geq \Big(\Summ{g\in G/H}\min\{n,\,\vp_{g}\big(\phi_H(S)\big)\}-n+1\Big)|H|.\ee
\end{theorem}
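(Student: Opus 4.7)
The plan is to reduce to the quotient $G/H$, in which the induced stabilizer becomes trivial, and then bound a sumset $A_1+\ldots+A_n\subseteq\Sigma_n(S)$ arising from a maximum $n$-setpartition of $S$ via Kneser's Theorem.

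First I would pass from $G$ to $G/H$. Since $\Sigma_n(S)$ is a union of $H$-cosets, we have $|\Sigma_n(S)|=|H|\cdot|\phi_H(\Sigma_n(S))|$, and a direct verification shows $\phi_H(\Sigma_n(S))=\Sigma_n(\phi_H(S))$: any $n$-subsum in $G$ projects to one in $G/H$, and every $n$-subsum in $G/H$ lifts back. Since $H$ is the full stabilizer of $\Sigma_n(S)$ in $G$, the stabilizer of $\Sigma_n(\phi_H(S))$ inside $G/H$ must be trivial. Thus it suffices to treat the case $\HH(\Sigma_n(S))=\{0\}$ and establish $|\Sigma_n(S)|\geq \Summ{g\in G}\min\{n,\vp_g(S)\}-n+1$.

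Next I would invoke Lemma \ref{lemma-setpartition-exsitence}. Retaining each $g\in G$ with multiplicity $\min\{n,\vp_g(S)\}$ gives a subsequence $S'\mid S$ of length $\ell:=\Summ{g\in G}\min\{n,\vp_g(S)\}\geq n$ with $\mathsf h(S')\leq n$, so $S'$ admits an $n$-setpartition $A_1\bdot\ldots\bdot A_n$ of total size $\ell$. Selecting one term from each $A_i$ produces an $n$-subset of $S$ whose sum lies in $\Sigma_n(S)$, giving the containment $A_1+\ldots+A_n\subseteq \Sigma_n(S)$. Applying Kneser's Theorem to this sumset with Kneser-stabilizer $K$ yields
$$|A_1+\ldots+A_n|\geq \Sum{i=1}{n}|A_i+K|-(n-1)|K|,$$
which in the favourable case $K=\{0\}$ becomes exactly $\ell-n+1$ and completes the argument.

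The main obstacle is the case $K\neq\{0\}$: the mere containment $A_1+\ldots+A_n\subseteq\Sigma_n(S)$ does not force $K\subseteq\HH(\Sigma_n(S))=\{0\}$. To handle it, one would choose the setpartition to be extremal---say, maximizing $\sum_i|A_i+K|-(n-1)|K|$ over all $n$-setpartitions of subsequences of $S$---and then exploit this extremality either to strictly increase the Kneser bound by refining the setpartition, or to force $K$ itself to stabilize $\Sigma_n(S)$, contradicting the triviality of $\HH(\Sigma_n(S))$. This iterative refinement is the technical core of the DeVos--Goddyn--Mohar Theorem (equivalently of the Partition Theorem), so I would invoke \cite{De-Go-Mo09a} or \cite[Chapters 13 and 14]{Gr13a} here rather than reprove it from scratch.
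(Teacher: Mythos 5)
The paper gives no proof of this statement at all: it is quoted as a special case of the DeVos--Goddyn--Mohar Theorem (equivalently, of the Partition Theorem), with exactly the citations you invoke at the end. Your preliminary steps are sound (the reduction modulo $H=\HH(\Sigma_n(S))$, and the setpartition-plus-Kneser bound when the Kneser stabilizer is trivial), but since the nontrivial-stabilizer case you defer to \cite{De-Go-Mo09a} or \cite[Chapters 13 and 14]{Gr13a} is precisely the content of the cited theorem---which is verbatim the statement to be proved---your argument ultimately rests on the same citation as the paper, so in substance the two treatments coincide.
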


For the proof of Theorem \ref{thm-index2}, the case when $G$ is isomorphic to the dicyclic group $Q_{4p}$ of order $4p$ with $p\geq 3$ prime proves to be particularly difficult.
One of the key ideas for handling this case is to reduce the basic product-one question for  the non-abelian group $Q_{4p}$ into a more complicated zero-sum question over the abelian group $C_{2p}$: Lemma \ref{lem-factorizationI}.
However, we first need some additional notation.

Given an additively written, abelian group $G$, we let $$2G=\{2g:\; g\in G\}\leq G$$ denote the homomorphic image of $G$ under the multiplication by $2$ homomorphism. Likewise, given a sequence $S=g_1\bdot \ldots\bdot  g_\ell\in \Fc(G)$, we let $$2S=2g_1\bdot \ldots\bdot 2g_\ell\in \Fc(2G).$$

For the following lemma, we will make use of the fact that \be \label{key-equivalence} \bigcup_{T\mid S,\; |T|=n}\left(\sigma(T)-\sigma(S\bdot T^{[-1]})\right)=\Sigma_n(2S)-\sigma(S)\ee for any sequence $S\in \Fc(G)$ with $|S|\geq n\geq 1$---the equality follows routinely from the definitions involved.

We remark that Lemma \ref{lem-factorizationI} remains true without assuming $p\geq 2$ is prime. However, the proof is much more technical and requires a somewhat detailed case distinction for defining and dealing with the subsequence $S'$ in the proof. As we only need the case when $p$ is prime, we have opted to present the simplified proof. Moreover, we will actually show  Lemma
\ref{lem-factorizationI} holds with $|U_1|=|U_2|\leq 2$.

\begin{lemma}\label{lem-factorizationI}
Let $G$ be a cyclic group of order $|G| = 2p$ with $p\geq 2$ prime, let $x\in G$ be the unique element with $\ord(x)=2$,  and let $S\in \Fc(G)$ be a sequence of even length $|S|\geq  2p+4$. Suppose there is a factorization $$S=T_1\bdot T_2\quad\mbox{ with }\quad  |T_1|=|T_2|=\frac{1}{2}|S|\quad\und\quad \sigma(T_1)-\sigma(T_2)=|T_1|x,$$ where $T_1,\,T_2\in \Fc(G)$. Then
there is a factorization $S=U_1\bdot U_2\bdot V_1\bdot V_2$, where $U_1,\,U_2,\,V_1,\,V_2\in \Fc(G)$ are nontrivial, such that
\be\label{good-fact}
|U_1|=|U_2|,\quad |V_1|=|V_2|,\quad \sigma(U_1)-\sigma(U_2)=|U_1|x \quad \und \quad \sigma(V_1)-\sigma(V_2)=|V_1|x \,.
\ee
\end{lemma}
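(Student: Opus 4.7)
The core reduction I would use is this: any $4$-factorization $S = U_1 \bdot U_2 \bdot V_1 \bdot V_2$ of the desired form induces a new $2$-factorization $(T_1^\ast, T_2^\ast) := (U_1 \bdot V_1, U_2 \bdot V_2)$ of $S$ still satisfying the hypothesis, since $|T_1^\ast| = |T_2^\ast| = n$ and $\sigma(T_1^\ast) - \sigma(T_2^\ast) = |U_1|x + |V_1|x = nx$. Conversely, it suffices to find \emph{any} valid $2$-factorization $(T_1^\ast, T_2^\ast)$ of $S$ together with subsequences $U_1 \mid T_1^\ast$ and $U_2 \mid T_2^\ast$ of common length $a \in [1,n-1]$ satisfying $\sigma(U_1) - \sigma(U_2) = ax$: then $V_i := T_i^\ast \bdot U_i^{[-1]}$ automatically yields $\sigma(V_1) - \sigma(V_2) = nx - ax = |V_1|x$ as required.

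Following the paper's announced bound $|U_1| = |U_2| \leq 2$, I would first try $a = 1$ by seeking $g_1 \in \supp(T_1^\ast)$ and $g_2 \in \supp(T_2^\ast)$ with $g_1 - g_2 = x$, equivalently $\supp(T_1^\ast) \cap (\supp(T_2^\ast) + x) \neq \emptyset$. Failing that, I would try $a = 2$ by seeking length-$2$ subsequences $U_1 \mid T_1^\ast$ and $U_2 \mid T_2^\ast$ with $\sigma(U_1) = \sigma(U_2)$, equivalently $\Sigma_2(T_1^\ast) \cap \Sigma_2(T_2^\ast) \neq \emptyset$ (using $2x = 0$, so $|U_1|x = 0$).

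The main obstacle is producing a valid $(T_1^\ast, T_2^\ast)$ on which one of these two cases succeeds. The given $(T_1, T_2)$ need not itself work: for example, with $p \geq 3$ and $T_1 = 0^{[2p]}$, $T_2 = 2^{[2p]}$, $x = p$, no restrictive solution exists on the given factorization, but the alternative factorization $T_1^\ast = T_2^\ast = 0^{[p]} \bdot 2^{[p]}$ succeeds via $U_1 = U_2 = 0 \bdot 2$. To produce such an alternative in general, I would parametrize the set of valid $2$-factorizations via identity \eqref{key-equivalence}: they correspond to length-$n$ subsequences $T_1^\ast \mid S$ with $2\sigma(T_1^\ast) = \sigma(S) + nx$, and since the kernel of multiplication by $2$ in $G$ is $\langle x \rangle$, this pins $\sigma(T_1^\ast)$ down to one of two values $\{y_0, y_0 + x\}$. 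Applying the Cauchy--Davenport theorem in the prime-order quotient $G/\langle x\rangle \cong C_p$, combined with the length hypothesis $n \geq p + 2$, I would argue that the family of valid factorizations is rich enough to force one of the two intersection conditions. The delicate case distinction separates factorizations whose supports spread across many cosets of $\langle x\rangle$ in $G$ (where Cauchy--Davenport bounds immediately force nonempty intersection) from factorizations with highly concentrated supports (handled by direct combinatorial arguments leveraging the lower bound $n \geq p+2$ and the structural implications of the hypothesis $\sigma(T_1^\ast) - \sigma(T_2^\ast) = nx$).
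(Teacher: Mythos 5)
Your reduction is sound and, in outline, matches the frame of the paper's own argument: the paper also builds the four blocks by taking $U_1,U_2$ of length at most $2$ and then producing $V_1,V_2$ as a balanced split of the remainder, detected through the identity \eqref{key-equivalence} inside $2G\cong C_p$ (equivalently, your quotient $G/\la x\ra$). Your observation that the given factorization $(T_1,T_2)$ need not itself split, with the example $0^{[2p]}\bdot 2^{[2p]}$, is correct and is implicitly why the paper does not try to refine $(T_1,T_2)$ but instead works with the whole sequence $S$.

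The genuine gap is that the decisive step is only promised, not proved: the assertion that ``the family of valid factorizations is rich enough to force one of the two intersection conditions'' is exactly the content of the lemma, and the sketch ``Cauchy--Davenport in $C_p$ plus a case distinction on concentrated supports'' does not suffice as it stands. The obstruction is multiplicity: when $2S$ is concentrated on one or two residues with multiplicities close to $\tfrac12|S|$, Cauchy--Davenport applied to supports (or to the two admissible values $y_0,y_0+x$ of $\sigma(T_1^*)$) gives no usable lower bound on the number or variety of valid factorizations. The paper handles precisely this regime with the DeVos--Goddyn--Mohar bound (Theorem \ref{DGM-devos-etal-thm}), whose term $\sum_{g}\min\{n,\vp_g\}$ is sensitive to multiplicities, applied to $\Sigma_{\ell'}(2S')$ for a carefully chosen shortened sequence $S'$: after normalizing by a translation so that $\vp_0(2S)=\mathsf h(2S)$, one removes either $0\bdot x$ (when $x\in\supp(S)$) or $0^{[2]}\bdot g^{[2]}$ with $g$ ultimately chosen so that $2g\neq 0$ (when $x\notin\supp(S)$), and one needs the preliminary reductions $(\supp(T_1)+x)\cap\supp(T_2)=\emptyset$ and $\min\{\vp_g(T_1),\vp_g(T_2)\}\le 1$ to bound $\mathsf h(2S)$ and rule out the high-multiplicity escape cases of DGM. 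None of this bookkeeping --- the translation normalization, the parity-correct choice of what to delete, and the multiplicity analysis that turns a failed DGM bound into a contradiction with $|S|\ge 2p+4$ --- appears in your proposal, so the proof is not complete; to finish along your lines you would need to supply an argument of comparable strength (DGM, or an equivalent setpartition/Cauchy--Davenport argument with explicit control of $\mathsf h(2S)$) in the concentrated case.
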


\begin{proof} Let $|S|=2\ell\geq 2p+4\geq 8$, so that \be\label{Tlong}|T_1|=|T_2|=\ell\geq p+2\geq 4.\ee Note $x=-x$ and $$|T_1|x=\left\{
                                                                              \begin{array}{ll}
                                                                                0, & \hbox{if $|T_1|=\ell$ is even} \\
                                                                                x, & \hbox{if $|T_1|=\ell$ is odd.}
                                                                              \end{array}
                                                                            \right.$$

If $g\in \supp(T_1)$ with $g+x\in \supp(T_2)$ for some $g\in G$, then the
lemma follows setting $U_1=g$, \ $U_2=x+g$, \ $V_1=T_1\bdot g^{[-1]}$ and $V_2=T_2\bdot (x+g)^{[-1]}$---in view of the hypotheses $|T_1|=|T_2|=\ell\geq 2$ and $\sigma(T_1)-\sigma(T_2)=|T_1|x$. Likewise, if there is some $g\in G$ with $\vp_g(T_1),\,\vp_g(T_2)\geq 2$, then the lemma follows setting $U_1=U_2=g^{[2]}$, \ $V_1=T_1\bdot g^{[-2]}$ and $V_2=T_2\bdot g^{[-2]}$---in view of $|T_1|=|T_2|=\ell\geq 3$. Therefore, we may assume
\ber\label{extra-ded} (\supp(T_1)+x)\cap \supp(T_2)=\emptyset \quad\und
\\
\min\{\vp_g(T_1),\,\vp_g(T_2)\}\leq 1\quad \mbox{ for all $g\in G$.}\label{disjoint}
\eer
In particular, \be\label{Benhie}\mathsf h(S)=\mathsf h(T_1\bdot T_2)\leq \max\{|T_1|+1,\,|T_2|+1\}=\ell+1.\ee
Since $G\cong C_{2p}$, given any $\alpha\in G$,
there are exactly $2$ distinct elements $g,\,h\in G$ such that $2g=2h=\alpha$.

Observing that it suffices to prove the lemma for any translated sequence $-g+S$, where $g\in G$ (the conclusions and hypotheses of the lemma are translation invariant), we may w.l.o.g.
translate our sequence $S$ so that \be\label{0-mod2-mult}\vp_{0}(2S)=\mathsf h(2S).\ee
Note that $$2S\in \Fc(2G)\quad \mbox{ with } \quad 2G\cong C_p.$$

If $\mathsf h(2S)\leq 2$, then \eqref{Tlong} gives $2p+4\leq 2\ell=|S|=|2S|\leq \mathsf h(2S)|2G|\leq 2p$, a contradiction. Therefore we have \be\label{dogl}\vp_{0}(2S)=\mathsf h(2S)\geq 3.\ee
By translating by $-x$ if need be, which  preserves \eqref{0-mod2-mult} since $2x=0$, we may w.l.o.g. assume \be\label{0more}\vp_0(S)\geq \vp_x(S).\ee
We distinguish two cases.

\subsection*{Case 1:} $x\in \supp(S)$.

In view of $x\in \supp(S)$ and \eqref{0more}, we have $0,\,x\in \supp(S)$. Set $S'=S\bdot 0^{[-1]}\bdot x^{[-1]}$ and $\ell'=\frac12|S'|=\ell-1$. Since $0,\,x\in\supp(S)$, it follows from   \eqref{extra-ded} that either $\supp(T_1)\cap \{0,x\}=\emptyset$ or $\supp(T_2)\cap \{0,x\}=\emptyset$. Combining this with \eqref{0-mod2-mult}, we conclude that \be\label{goodcase} \mathsf h(2S)=\vp_0(2S)\leq \max\{|T_1|,\,|T_2|\}=\ell.\ee


We will show that \be\label{finalthing} \ell'x\in \bigcup_{T\mid S',\; |T|=\ell'}\left(\sigma(T)-\sigma(S'\bdot T^{[-1]})\right)=\Sigma_{\ell'}(2S')-\sigma(S'),\ee where the equality above was noted in \eqref{key-equivalence}. Once \eqref{finalthing} is established, we will know there exists some subsequence $T\mid S'=S\bdot 0^{[-1]}\bdot x^{[-1]}$ such that $$\ell'=|T|=2\ell'-\ell'=|S'|-|T|=|S'\bdot T^{[-1]}|\quad \und\quad \sigma(T)-\sigma(S'\bdot T^{[-1]})=\ell'x=|T|x,$$ whence the lemma will follow setting $U_1=0$, \ $U_2= x$, \ $V_1=T$ and $V_2=S'\bdot T^{[-1]}$. Thus it remains to establish \eqref{finalthing} for the sequence $S'$ to complete Case 1. For this, we apply Theorem \ref{DGM-devos-etal-thm} to $\Sigma_{\ell'}(2S')$.


In view of the hypotheses $S=T_1\bdot T_2$ with $\sigma(T_1)-\sigma(T_2)=|T_1|x=\ell x$, we know $$\sigma(S)=2\sigma(T_2)+\ell x=2\sigma(T_2)+\ell'x+x.$$ Thus  $$\sigma(S')+\ell'x=\sigma(S)-x+\ell' x=2\sigma(T_2)+2\ell'x=2\sigma(T_2)\in 2G.$$ Consequently, if $\Sigma_{\ell'}(2S')=2G$, then  $2\sigma(T_2)=\sigma(S')+\ell'x\in \Sigma_{\ell'}(2S')$ follows, yielding \eqref{finalthing}, as desired. Therefore we may assume \be\label{stavet}|\Sigma_{\ell'}(2S')|\leq |2G|-1=p-1.\ee
Consequently, since $2G\cong C_p$ has no nontrivial, proper subgroups,  we must have $\mathsf H(\Sigma_{\ell'}(2S'))$ trivial.
Since $\mathsf H(\Sigma_{\ell'}(2S'))$ is trivial and $\ell'=\ell-1\geq p+1$ (by \eqref{Tlong}), Theorem \ref{DGM-devos-etal-thm} will contradict \eqref{stavet} if
$2S'$ contains $2$ distinct terms each having multiplicity at least $\ell'+1$. Thus there can be at most one distinct term with multiplicity at least $\ell'+1$ in $2S'$. Furthermore,  Theorem \ref{DGM-devos-etal-thm} will again contradict \eqref{stavet} unless such  a term from $2S'$ exists having  multiplicity at least $2\ell'-p+2=2\ell-p\geq \ell+2$, where the inequality follows from \eqref{Tlong}. However the latter contradicts \eqref{goodcase} in view of the trivial inequality $\mathsf h(2S')\leq \mathsf h(2S)$, completing Case 1.

\subsection*{Case 2:} $x\notin \supp(S)$.

Since $x\notin \supp(S)$, it follows from \eqref{dogl} that $$\vp_0(S)=
\vp_0(S)+\vp_x(S)=\mathsf v_0(2S)\geq 3.$$
If $\mathsf h(S\bdot 0^{[-2]})\leq 1$, then it follows in view of the case hypothesis that $$2p+2\leq |S|-2=|S\bdot 0^{[-2]}|\leq |G\setminus\{x\}|=2p-1,$$ a contradiction. Therefore we must instead have some $g\in \supp(S\bdot 0^{[-2]})$ with $\vp_g(S\bdot 0^{[-2]})\geq 2$, allowing us to define $S':=S\bdot 0^{[-2]}\bdot g^{[-2]}$. Let $\ell'=\ell-2=\frac12|S'|$.
Note that $S'$ is nontrivial in view of $|S|=2\ell\geq 2p+4\geq 8$. For the moment, $g\in \supp(S\bdot 0^{[-2]})$ is an arbitrary element with $\vp_g(S\bdot 0^{[-2]})\geq 2$. We will choose $g$ more carefully later in the proof.

Next, we will show that \be\label{finalthingII} \ell'x\in \bigcup_{T\mid S',\; |T|=\ell'}\left(\sigma(T)-\sigma(S'\bdot T^{[-1]})\right)=\Sigma_{\ell'}(2S')-\sigma(S'),\ee where the equality above was noted in \eqref{key-equivalence}. Once \eqref{finalthingII} is established, we will know there exists some subsequence $T\mid S'=S\bdot 0^{[-2]}\bdot g^{[-2]}$ such that $$\ell'=|T|=2\ell'-\ell'=|S'|-|T|=|S'\bdot T^{[-1]}|\quad \und\quad \sigma(T)-\sigma(S'\bdot T^{[-1]})=\ell'x=|T|x,$$ whence the lemma will follow setting $U_1=0\bdot g$, \ $U_2= 0\bdot g$, \ $V_1=T$ and $V_2=S'\bdot T^{[-1]}$. Thus it remains to establish \eqref{finalthingII} for the sequence $S'$ to complete Case 2. For this, we apply Theorem \ref{DGM-devos-etal-thm} to $\Sigma_{\ell'}(2S')$.


\bigskip

In view of the hypotheses $S=T_1\bdot T_2$ with $\sigma(T_1)-\sigma(T_2)=|T_1|x=\ell x$, we know $$\sigma(S)=2\sigma(T_2)+\ell x=2\sigma(T_2)+(\ell-2)x=2\sigma(T_2)+\ell'x.$$ Thus  $$\sigma(S')+\ell'x=\sigma(S)-2g+\ell' x=2\sigma(T_2)-2g+2\ell'x=2\sigma(T_2)-2g\in 2G.$$ Consequently, if $\Sigma_{\ell'}(2S')=2G$, then  $\sigma(S')+\ell'x\in \Sigma_{\ell'}(2S')$ follows, yielding \eqref{finalthingII}, as desired. Therefore we may assume \be\label{stavetII}|\Sigma_{\ell'}(2S')|\leq |2G|-1=p-1.\ee Consequently, since $2G\cong C_p$ has no nontrivial, proper subgroups,  we must have $\mathsf H(\Sigma_{\ell'}(2S'))$ trivial, in which case Theorem \ref{DGM-devos-etal-thm} yields
 \be\label{the-thing}|\Sigma_{\ell'}(2S')|\geq \Summ{y\in 2G}\min\{\ell',\,\vp_y(2S')\}-\ell'+1.\ee

Since $\ell'=\ell-2\geq p$ holds by \eqref{Tlong}, we see that if there are $2$ distinct terms of $2S'$ each having multiplicity at least $\ell'+1$, then \eqref{the-thing} will contradict \eqref{stavetII}. Therefore, there is at most one distinct term of $2S'$ having multiplicity at least $\ell'+1$. Moreover, \eqref{the-thing} will again contradict \eqref{stavetII} unless such a term of $2S'$ exists having multiplicity at least $2\ell'-p+2=2\ell-p-2$. Thus \be\label{highmult}\mathsf h(2S')\geq 2\ell-p-2\geq \ell,\ee where the latter inequality follows from \eqref{Tlong}.
 In view of our case hypothesis, \eqref{0-mod2-mult} and \eqref{Benhie}, it follows that $$\mathsf h(2S')\leq \mathsf h(2S)=\vp_0(2S)=\vp_0(S)\leq \ell+1.$$

Suppose $\mathsf h(2S)=\vp_0(2S)=\ell+1$. Then all nonzero elements will have multiplicity at most $|2S|-\ell-1=\ell-1$ in $2S$, and thus also in $2S'$, while $\vp_0(2S')\leq \vp_0(2S)-2=\ell-1$ follows in view of $S'=S\bdot 0^{[-2]}\bdot g^{[-2]}$. In such case, it follows that  $\mathsf h(2S')\leq \ell-1$, contradicting \eqref{highmult}. So we must have $\mathsf h(2S)=\vp_0(2S)\leq \ell$.
On the other hand, if $\mathsf h(2S)\leq \ell-1$, then \eqref{highmult} will again be contradicted in view of the trivial inequality $\mathsf h(2S')\leq \mathsf h(2S)$.
So we conclude that $$\mathsf h(2S)=\vp_0(2S)=\ell.$$

Now $\vp_0(2S')\leq \vp_0(2S)-2= \ell-2$. Thus \eqref{highmult} ensures that there must be a nonzero element having multiplicity at least $\ell$ in $2S'$, and thus also in $2S$. Since $0$ also has multiplicity at least $\ell$ in $2S$ with $|2S|=|S|=2\ell$, this is only possible if $|\supp(2S)|=2$ with both elements from $\supp(2S)$ having multiplicity $\ell$ in $2S$. As a result, since $\ell\geq p+2\geq 3$, the pigeonhole principle guarantees that we can take $g$ with $2g\neq 0$ when defining $S'=S\bdot 0^{[-2]}\bdot g^{[-2]}$, whence $\vp_0(2S')= \vp_0(2S)-2= \ell-2$ and $\vp_{2g}(2S')= \vp_{2g}(2S)-2= \ell-2$ follow, contradicting \eqref{highmult} for the final time.
\end{proof}


\section{Groups with a Cyclic, Index $2$ Subgroup}\label{sec-index2}

In this section, we determine the large Davenport constant of all finite groups containing a cyclic, index $2$ subgroup.
Despite the simple formulation of Theorem \ref{thm-index2}, we will need some specialized information regarding the isomorphism classes of such groups. Thus we  summarize their classification in a form suitable for our needs. The main result is Theorem   \ref{thm-full-index2-class}, which is taken from a recent monograph by Jones, Kwak, and   Xu \cite[Section 3.4.3]{Jo-Kw-Xu13a}. We start with a lemma which is slightly stronger than the classical result by H\"older. The lemma follows from  the characterization given in the above monograph; we have pulled it out for clarity. H\"older's Theorem can be found in \cite[Chap. III, \S 7]{Za56a} or \cite[Chapter 7]{Jo90a}.

\begin{lemma}\label{Generic-Pres-index2}
Let $G$ be a finite group of order $|G|=2n=2^{s+1}m$, where $\gcd(2,m)=1$, \ $s\geq 0$, \ $m\geq 1$, and $n=2^sm$. Suppose $G$ has a cyclic, index $2$ subgroup. Then $G$ has a presentation of one of the following  forms:
\begin{itemize}
\item[(A)] $G=\la \alpha,\,\tau\mid \alpha^{n}=1,\quad\alpha\tau=\tau\alpha^r,\quad\tau^2=1\ra$,\\
\item[(B)] $G= \la \alpha, \, \tau\mid \alpha^{n}=1,\quad \alpha\tau=\tau\alpha^r,\quad \tau^2=\alpha^{\frac12 n}\ra,$\quad or \\

\item[(C)] $G=\la \alpha,\,\tau\mid \alpha^{n}=1,\quad\alpha\tau=\tau\alpha^r,\quad\tau^2=\alpha^m\ra$
\end{itemize}
for some $r\in [1,n]$ with (B) only possible if $s\geq 1$. In particular,
\[
G=\{1,\alpha,\alpha^2,\ldots,\alpha^{n-1}\}\cup \{\tau,\tau\alpha,\tau\alpha^2,\ldots,\tau
\alpha^{n-1}\} \,.
\]
\end{lemma}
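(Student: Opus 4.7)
The plan is to exhibit a presentation of $G$ by choosing a generator $\alpha$ of the given cyclic index-$2$ subgroup, selecting any $\tau\in G\setminus\la\alpha\ra$, deriving the two natural relations between $\alpha$ and $\tau$, and finally normalizing $\tau$ so that $\tau^2$ falls into one of the three allowed forms.

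First I would set $H=\la\alpha\ra$, which is normal in $G$ because $|G:H|=2$, and fix any $\tau\in G\setminus H$. Since $G=H\cup \tau H$, we immediately obtain the coset decomposition
\[
G=\{1,\alpha,\ldots,\alpha^{n-1}\}\cup\{\tau,\tau\alpha,\ldots,\tau\alpha^{n-1}\}
\]
asserted at the end of the lemma. Moreover, $\tau^2\in H$, so $\tau^2=\alpha^k$ for some $k\in[0,n-1]$, and by normality $\tau^{-1}\alpha\tau\in H$, say $\tau^{-1}\alpha\tau=\alpha^r$, equivalently $\alpha\tau=\tau\alpha^r$, for some $r\in[1,n]$.

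Next I would derive the two compatibility constraints on $(r,k)$. Iterating conjugation gives $\tau^{-2}\alpha\tau^2=\alpha^{r^2}$, and as $\tau^2=\alpha^k$ lies in the abelian subgroup $H$, the left-hand side equals $\alpha$, forcing $r^2\equiv 1\pmod n$. Commuting $\tau$ with $\tau^2$ gives $\tau\alpha^k=\alpha^k\tau=\tau\alpha^{kr}$, which yields $k(r-1)\equiv 0\pmod n$. These two congruences, together with the relations $\alpha^n=1$, $\alpha\tau=\tau\alpha^r$, $\tau^2=\alpha^k$, already determine $G$, since every element can be written uniquely as $\alpha^i$ or $\tau\alpha^i$ with $i\in[0,n-1]$, and multiplication of any two such elements is dictated by the three relations.

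The heart of the argument is the normalization of $k$. The generator $\tau$ is only determined up to replacement by $\tau':=\tau\alpha^j$ for arbitrary $j\in\Z$, and a direct computation gives
\[
(\tau')^2=\tau\alpha^j\tau\alpha^j=\tau^2\alpha^{j(r+1)}=\alpha^{k+j(r+1)}.
\]
Thus, setting $d:=\gcd(r+1,n)$, the parameter $k$ may be shifted by any multiple of $d$. I would then exhibit a choice of $j$ pushing $k$ into $\{0,\tfrac12 n,m\}$, splitting into cases via the $2$-adic decomposition $n=2^sm$ and the constraint $r^2\equiv 1\pmod n$ (so $n\mid(r-1)(r+1)$). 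For $r\equiv 1\pmod n$ the group $G$ is abelian and the structure theorem for finite abelian groups yields a $\tau$ with $\tau^2=1$ (form (A)) unless $G$ is cyclic, in which case a cyclic generator supplies a $\tau$ with $\tau^2=\alpha^m$ (form (C)); for $r\not\equiv 1\pmod n$ the constraint $k(r-1)\equiv 0\pmod n$ forces $k$ to live in a proper subgroup of $H$, and a case split on $r\bmod 2^s$ and $r\bmod m$ combined with the achievable residues $k+j(r+1)\bmod d$ shows one can always arrange $k\in\{0,\tfrac12 n,m\}$, the middle value being available only when $s\geq 1$.

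The main obstacle is precisely this final normalization: the congruence $r^2\equiv 1\pmod n$ can have many solutions when $s\geq 3$, and one has to verify in each case that the achievable values of $k+j(r+1)$ meet $\{0,\tfrac12 n,m\}$. Because the precise classification has already appeared in \cite{Jo-Kw-Xu13a}, I would shortcut this bookkeeping by appealing directly to their theorem: what remains is then only the observation that the conjugation and coset structure derived above forces every finite group with a cyclic index-$2$ subgroup into one of the three listed presentation families.
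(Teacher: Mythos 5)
Your proposal is correct and, in the end, takes essentially the same route as the paper: the paper does not prove this lemma directly either, but extracts it from the classification in \cite{Jo-Kw-Xu13a} (a H\"older-type theorem), which is exactly the citation you fall back on for the delicate normalization of $\tau^2$. The elementary scaffolding you supply beforehand---the coset decomposition, the constraints $r^2\equiv 1\pmod n$ and $k(r-1)\equiv 0\pmod n$, and the shift formula $(\tau\alpha^j)^2=\alpha^{k+j(r+1)}$---is accurate, so nothing essential is missing.
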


Of course, not all values of $r\in [1,n]$ are possible nor necessarily give rise to non-isomorphic groups. However, throughout this section, we will use the format given by Lemma \ref{Generic-Pres-index2} for $G$, saying that $G$ has type (A)  if it has a presentation given by (A) in Lemma \ref{Generic-Pres-index2},  and likewise defining types (B) and (C). Note that if $G$ is of  type (C) with $r=1$, then $\ord(\tau\alpha)=2n$, which corresponds to when $G$ is cyclic. Also, when $s=0$, type (C) coincides with type (A), and when $s=1$, type (C) coincides with type (B). Type (C) is really only needed when $s\geq 2$, but it will be convenient to state Lemma \ref{Generic-Pres-index2} with this slight amount of overlap between types.

\bigskip

In order to unify the notation in the proofs and statements of theorems in this section, we list a set of assumptions regarding hypotheses and notation that we will use throughout this section.
The importance of the parameters $n^-$, \ $n^+$, \ $m^-$ and $m^+$ will become apparent later in the section.

\bigskip

\begin{itemize}
\item[]\begin{center} {\bf  General Assumptions for Section \ref{sec-index2}} \end{center}

\medskip
\item $G$ is a finite group of order $|G|=2n=2^{s+1}m$, where $\gcd(2,m)=1$, \ $s\geq 0$, \ $m\geq 1$, \ and  $n=2^sm$.
\item $G$ has a cyclic, index $2$ subgroup, notated as in Lemma \ref{Generic-Pres-index2}, with parameter $r\in [1,n]$.
\item $G'=[G,G]\leq G$ is the commutator subgroup of $G$.
\item $P\leq G$ is a Sylow $2$-subgroup of $G$.
\item $n^-=\gcd(r-1,n)$  \ and \ $n=n^+n^-$.
\item $m^-=\gcd(r-1,m)$ \ and  \ $m^+=\gcd(r+1,m)$.
\end{itemize}

\bigskip

We continue with the characterization for $2$-groups, which can be found in many standard texts (e.g., \cite[Theorem 1.2]{Be08a}). The general case (Theorem \ref{thm-full-index2-class}) follows by routine arguments from the $2$-group case.

\begin{lemma}\label{lem-2-group-class}
Let $G$ satisfy the General Assumptions for Section \ref{sec-index2}. Suppose $G$ is a $2$-group, so $m=1$ and $G=P$. Then $G$ is isomorphic to one of the following non-isomorphic  groups.

\begin{itemize}
\item[(i)] $s\geq 0$ and $G$ is a cyclic group: $$G\cong C_{2^{s+1}}= \la\alpha,\,\tau\mid \alpha^{2^s}=1,\quad\alpha \tau=\tau \alpha,\quad \tau^2=\alpha\ra.$$
\item[(ii)] $s\geq 1$ and $G$ is an abelian but non-cyclic group: $$G\cong C_2\times C_{2^s}=\la\alpha,\,\tau\mid \alpha^{2^s}=1,\quad\alpha \tau=\tau \alpha,\quad \tau^2=1\ra.$$
\item[(iii)] $s\geq 2$ and $G$ is a dihedral group: $$G\cong D_{2^{s+1}}=\la\alpha,\,\tau\mid \alpha^{2^s}=1,\quad\alpha \tau=\tau \alpha^{-1},\quad \tau^2=1\ra.$$

\item[(iv)] $s\geq 2$ and $G$ is a generalized quaternion group: $$G\cong Q_{2^{s+1}}=\la\alpha,\,\tau\mid \alpha^{2^s}=1,\quad\alpha \tau=\tau \alpha^{-1},\quad \tau^2=\alpha^{2^{s-1}}\ra.$$

\item[(v)] $s\geq 3$ and $G$ is a semi-dihedral group: $$G\cong SD_{2^{s+1}}=\la\alpha,\,\tau\mid \alpha^{2^s}=1,\quad \alpha \tau=\tau \alpha^{-1+2^{s-1}},\quad \tau^2=1\ra.$$
\item[(vi)] $s\geq 3$ and $G$ is an ordinary meta-cyclic group: $$G\cong M_{2^{s+1}}=\la\alpha,\,\tau\mid \alpha^{2^s}=1,\quad \alpha \tau=\tau \alpha^{1+2^{s-1}},\quad \tau^2=1\ra.$$
\end{itemize}
\end{lemma}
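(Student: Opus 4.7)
The plan is to apply Lemma~\ref{Generic-Pres-index2} with $m=1$, $n=2^s$ to obtain a presentation $G=\la\alpha,\tau\mid \alpha^{2^s}=1,\,\alpha\tau=\tau\alpha^r,\,\tau^2=\alpha^k\ra$ for some $r\in[1,2^s]$ and $k\in[0,2^s-1]$, and then to classify admissible pairs $(r,k)$ up to the freedom of replacing $\tau$ by another coset representative $\tau\alpha^j$ of $G\setminus\la\alpha\ra$. Two congruences constrain $(r,k)$: first, $\la\alpha\ra$ being normal of index~$2$ forces conjugation by $\tau$ to be an automorphism of $C_{2^s}$, so $\gcd(r,2^s)=1$, i.e.\ $r$ is odd; second, $\tau^2\in\la\alpha\ra$ commutes with both $\alpha$ and $\tau$, and iterating $\tau^{-1}\alpha\tau=\alpha^r$ together with $\tau^{-1}\alpha^k\tau=\alpha^{kr}$ translates these into $r^2\equiv 1\pmod{2^s}$ and $k(r-1)\equiv 0\pmod{2^s}$.

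The next step is to invoke the classical solution to $r^2\equiv 1\pmod{2^s}$: the solutions are $r\equiv 1$ for $s\leq 1$, $r\in\{\pm 1\}$ for $s=2$, and $r\in\{\pm 1,\,\pm 1+2^{s-1}\}$ for $s\geq 3$. The crucial reduction is the observation that substituting $\tau\mapsto\tau\alpha^j$ preserves the relation $\alpha\tau=\tau\alpha^r$ while sending $\tau^2=\alpha^k$ to $\alpha^{k+j(r+1)}$; hence for fixed $r$ the isomorphism type of $G$ depends only on the orbit of $k$ in
\[
\{k\in\Z/2^s\Z:\,k(r-1)\equiv 0\pmod{2^s}\}/\la r+1\ra.
\]

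A direct case analysis then matches each $(r,\text{orbit})$ with one of the six groups in the statement. For $r=1$ every $k$ is admissible and $\la r+1\ra=2\Z/2^s\Z$, so the two orbits (even and odd $k$) give $C_2\times C_{2^s}$ and $C_{2^{s+1}}$; for $r=-1$ (possible only for $s\geq 2$) admissibility forces $k\in\{0,2^{s-1}\}$ and $\la r+1\ra=\{0\}$, producing $D_{2^{s+1}}$ and $Q_{2^{s+1}}$; for $r=1+2^{s-1}$ (requiring $s\geq 3$) the admissible $k$ are the even residues and $\la r+1\ra=2\Z/2^s\Z$ (since $r+1=2(1+2^{s-2})$ has odd second factor), so a single orbit yields $M_{2^{s+1}}$; and for $r=-1+2^{s-1}$ (requiring $s\geq 3$) admissibility forces $k\in\{0,2^{s-1}\}$, which equals $\la r+1\ra=\la 2^{s-1}\ra$, so a single orbit yields $SD_{2^{s+1}}$. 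Pairwise non-isomorphism then follows from standard invariants: cyclicity isolates (i), abelianness isolates (ii), uniqueness of an involution isolates $Q_{2^{s+1}}$, and the remaining non-abelian classes are separated by the order of the centre ($2^{s-1}$ for $M_{2^{s+1}}$ versus $2$ for $D_{2^{s+1}}$ and $SD_{2^{s+1}}$) together with counts of involutions.

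The step I expect to be the main obstacle is the case-by-case bookkeeping: one must verify that the orbit computation under $k\mapsto k+(r+1)$ correctly collapses the three presentation types (A), (B), (C) of Lemma~\ref{Generic-Pres-index2} into exactly the six isomorphism classes listed for each $s$, with no hidden isomorphism between different values of $r$ and no admissible orbit overlooked, while keeping track of the small-$s$ boundary cases where the admissibility ranges degenerate.
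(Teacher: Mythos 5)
Your argument is correct, but it is worth noting that the paper does not prove this lemma at all: it simply quotes the classification from the literature (Berkovich, \emph{Groups of prime power order}, Theorem 1.2), just as Lemma \ref{Generic-Pres-index2} is quoted from Jones--Kwak--Xu. What you supply is the standard self-contained derivation: starting from the generic presentation with parameters $(r,k)$, you extract the constraints $r$ odd, $r^2\equiv 1\pmod{2^s}$, $k(r-1)\equiv 0\pmod{2^s}$, use the known solution set of $r^2\equiv 1\pmod{2^s}$ (namely $\{1\}$, $\{\pm1\}$, $\{\pm1,\pm1+2^{s-1}\}$ according as $s\le 1$, $s=2$, $s\ge 3$), and normalize $k$ via the substitution $\tau\mapsto\tau\alpha^j$, which sends $k$ to $k+j(r+1)$; your orbit computations for each $r$ are correct and yield exactly the six presentations (i)--(vi), and pairwise non-isomorphism follows from the invariants you list (cyclicity, abelianness, the unique involution of $Q_{2^{s+1}}$, the centre of order $2^{s-1}$ in $M_{2^{s+1}}$ versus order $2$ in $D_{2^{s+1}}$ and $SD_{2^{s+1}}$, and involution counts $2^s+1$ versus $2^{s-1}+1$ for the last two). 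Two small points deserve explicit mention in a polished write-up: first, after replacing $\tau$ by $\tau\alpha^j$ you should observe that $G$, being generated by $\alpha$ and the new representative and having order exactly $2^{s+1}$, is a quotient of (hence equal to) the group defined by the normalized presentation, since any group satisfying these relations has order at most $2^{s+1}$; second, the degenerate cases $s=0,1$ (where only $r=1$ occurs and, for $s=0$, the two orbits collapse) should be checked against the ranges of $s$ in the statement, which you have flagged. With those details filled in, your proof is a complete and more informative substitute for the paper's citation.
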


In view of Lemma \ref{lem-2-group-class}, given a finite $2$-group $P$ of order $2^{s+1}$ having a cyclic, index $2$ subgroup, we let $\rho(P)\in [1,2^s]$ be the value of $r$ in its presentation given by Lemma \ref{Generic-Pres-index2}, i.e.,
$$\rho(P)=\left\{
            \begin{array}{ll}
              1, & \hbox{for $P$ given by Lemma \ref{lem-2-group-class}(i)(ii) with $s\geq 0$} \\
              -1+2^s, & \hbox{for $P$ given by Lemma \ref{lem-2-group-class}(iii)(iv) with $s\geq 2$} \\
              -1+2^{s-1}, & \hbox{for $P$ given by Lemma \ref{lem-2-group-class}(v) with $s\geq 3$}\\
1+2^{s-1}, & \hbox{for $P$ given by Lemma \ref{lem-2-group-class}(vi) with $s\geq 3$.}
            \end{array}
          \right.$$

The full classification of finite groups having a cyclic, index $2$ subgroup is then the following.

\begin{theorem}\label{thm-full-index2-class} Let $G$ satisfy the General Assumptions for Section \ref{sec-index2}. Then the Sylow $2$-group $P$ is of one of the six types (i)--(vi) given by Lemma \ref{lem-2-group-class} and $$r\in [1,n]\quad\mbox{ satisfies }\quad r^2\equiv 1\mod m\quad\und \quad r\equiv \rho(P)\mod 2^s.$$ Furthermore,
\begin{itemize}
\item[1.] If $P$ is of type (ii), (iii), (v) or (vi), then $G$ has type (A) in Lemma \ref{Generic-Pres-index2}.
\item[2.] If $P$ is of type (iv), then $G$ has type (B) in Lemma \ref{Generic-Pres-index2}.
\item[3.] If $P$ is of type (i), then $G$ has type (C) in Lemma \ref{Generic-Pres-index2}.
 \end{itemize}
Different allowed values of $r\in [1,n]$ correspond to non-isomorphic groups, and any group described above indeed has a cyclic, index $2$ subgroup.
\end{theorem}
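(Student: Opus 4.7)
The plan is to combine Lemma~\ref{Generic-Pres-index2} (the three possible presentation types (A), (B), (C) of $G$, parameterized by $r\in[1,n]$) with Lemma~\ref{lem-2-group-class} (the classification of $2$-groups having a cyclic, index $2$ subgroup), applied to the Sylow $2$-subgroup $P$. First I would derive $r^2\equiv 1\pmod n$: from $\alpha\tau=\tau\alpha^r$ one obtains $\tau^{-2}\alpha\tau^2=\alpha^{r^2}$, and since $\tau^2\in\langle\alpha\rangle$ commutes with $\alpha$ in all three types, this forces $\alpha=\alpha^{r^2}$. In particular $r^2\equiv 1\pmod m$.

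Next I would identify $P$ explicitly. The subgroup $\langle\alpha^m\rangle$ has order $2^s$ and is the Sylow $2$-subgroup of $\langle\alpha\rangle$, so it lies in some, hence our, $P$. Computing the order of $\tau$ in each case: in (A) it is $2$; in (B), $\tau^2=\alpha^{n/2}=(\alpha^m)^{2^{s-1}}$ has order $2$ (using $s\geq 1$), so $\tau$ has order $4$; in (C), $\tau^2=\alpha^m$ has order $2^s$, so $\tau$ has order $2^{s+1}$. In every case $\tau$ is a $2$-element, hence $\tau\in P$, yielding $P=\langle\alpha^m,\tau\rangle$ in cases (A), (B) and $P=\langle\tau\rangle\cong C_{2^{s+1}}$ in case (C); a quick order count gives $|P|=2^{s+1}$ each time.

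Setting $\beta:=\alpha^m$ (of order $2^s$), the $G$-relation $\alpha^m\tau=\tau\alpha^{mr}$ descends to $\beta\tau=\tau\beta^{r\bmod 2^s}$ in $P$, since $\alpha^{mr}=(\alpha^m)^{r\bmod 2^s}$ for an element of order $2^s$. Combining this with the value of $\tau^2$ in $P$ (namely $1$, $\beta^{2^{s-1}}$, or $\beta$ in cases (A), (B), (C) respectively) and matching against Lemma~\ref{lem-2-group-class}: case (A) forces $P$ into type (ii), (iii), (v), or (vi); case (B) into type (iv); case (C) into type (i) (with cyclicity of $P$ further forcing $\beta$ and $\tau$ to commute, hence $r\equiv 1\pmod{2^s}$). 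Comparing the exponent of the conjugation relation with the defining presentation of the corresponding type in Lemma~\ref{lem-2-group-class} then yields $r\equiv\rho(P)\pmod{2^s}$. Since the six $P$-types are pairwise disjoint, the contrapositives of these implications deliver parts 1, 2, 3.

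Existence is immediate: each presentation listed in the theorem visibly exhibits $\langle\alpha\rangle$ as a cyclic subgroup of index $2$. The main obstacle is the non-isomorphism claim, for which my plan is to recover $r\in[1,n]$ from the isomorphism class of $G$ via CRT as the pair $(r\bmod 2^s,\,r\bmod m)$: the first component is read off from the isomorphism type of $P$ through $\rho(P)$, while the second arises as the conjugation action of any element of $G\setminus\langle\alpha\rangle$ on the unique cyclic subgroup of $\langle\alpha\rangle$ of order $m$. The delicate point is verifying that this order-$m$ subgroup is characteristic in $G$ and that the induced automorphism depends only on $G$ up to isomorphism (and not on the choice of $\tau$ or of a generator of $\langle\alpha\rangle$); this I expect will require a separate check in each of the cases (A), (B), (C).
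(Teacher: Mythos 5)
First, a remark on the comparison you were asked to be measured against: the paper does not prove Theorem \ref{thm-full-index2-class} at all. It is quoted from the monograph of Jones, Kwak and Xu \cite[Section 3.4.3]{Jo-Kw-Xu13a}, with the $2$-group case (Lemma \ref{lem-2-group-class}) cited from standard texts such as \cite{Be08a}, so there is no internal proof to compare against; you are in effect re-proving a cited classification result. Several of your steps are sound in outline: the computation $r^2\equiv 1\pmod n$, the identification of a Sylow $2$-subgroup with $\la \alpha^m,\tau\ra$ of order $2^{s+1}$, and the CRT plan for the non-isomorphism claim (the elements of odd order all lie in $\la\alpha\ra$, so the subgroup of order $m$ is characteristic, and power maps of a cyclic group are preserved under every isomorphism, which disposes of the ``delicate point'' you flag).

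The genuine gap is in the step that is meant to give $r\equiv\rho(P)\pmod{2^s}$ and parts 1--3. Your intermediate implications are not all true, so the contrapositive argument collapses. Concretely, ``case (B) forces $P$ into type (iv)'' is false: for $s\geq 2$ the abelian group $C_2\times C_{2^sm}$ admits a type (B) presentation with $r=1$ (take a commuting $\tau$ with $\tau^2=\alpha^{n/2}$; such a $\tau$ exists since $4\mid n$), yet its Sylow $2$-subgroup is $C_2\times C_{2^s}$, of type (ii); similarly a type (A) presentation with $s=0$ gives cyclic $P$ of type (i). The underlying issue is that one and the same $G$ may admit presentations of several of the forms (A), (B), (C) --- the paper itself notes the overlaps for $s=0$ and $s=1$, and the abelian example shows overlaps occur for $s\geq 2$ as well --- so the shape of one presentation does not determine the type of $P$, and disjointness of the six $P$-types cannot convert your implications into the existence statements 1--3 (``$G$ has a presentation of type (A)/(B)/(C)''). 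In the overlap cases one must actually construct the asserted presentation, e.g.\ replace $\tau$ by $\tau\alpha^k$ and solve $\tau^2\alpha^{k(r+1)}=1$, whose solvability is governed by the $2$-adic valuations of $r\pm 1$, i.e.\ exactly the case analysis recorded in \eqref{table-r}. For the same reason the ``matching'' against Lemma \ref{lem-2-group-class} must be carried out residue by residue: a (B)-shaped Sylow subgroup with $r\equiv 1\pmod{2^s}$ is isomorphic to type (ii), and with $r\equiv\pm1+2^{s-1}$ to types (vi) and (v), none of which appear verbatim with $\tau^2=\beta^{2^{s-1}}$ in the list, so identifying them (and hence proving $r\equiv\rho(P)\pmod{2^s}$) needs an argument you have not supplied. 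Finally, the closing existence claim is not ``immediate'': one must first verify that each presentation defines a group of order exactly $2n$ (compatibility of $\tau^2$ with the automorphism $\alpha\mapsto\alpha^r$, realized by an explicit extension of $C_n$ by $C_2$) before $\la\alpha\ra$ can be said to have order $n$ and index $2$.
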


From Theorem \ref{thm-full-index2-class}, we see that the parameter $r\in [1,n]$ must satisfy the equation \be\label{r-defining-n}(r+1)(r-1)=r^2-1\equiv 0\mod m.\ee  Now consider a prime  $p$ dividing $m$. Since $p$ must be odd (as $m$ is odd), either $\gcd(r+1,p)=1$ or $\gcd(r-1,p)=1$. Thus \eqref{r-defining-n} implies that  either $r+1\equiv 0\mod p^{\vp_p(m)}$ or $r-1\equiv 0\mod p^{\vp_p(m)}$. This means that we can factor \begin{align}\label{n-plus-minus-def}&m=m^+m^-\quad\mbox{ with }\quad
\gcd(m^+,m^-)=1,\quad\mbox{ where}\\ \nn&m^+\geq 1\quad \mbox{ contains all those primes $p\mid m$ with $r+1\equiv 0\mod p$}\quad  \und \\\nn&m^-\geq 1 \quad\mbox{ contains all those primes $p\mid m$ with $r-1\equiv 0\mod p$.}\end{align} In other words $$m^+=\gcd(r+1,m)\quad\und\quad m^-=\gcd(r-1,m).$$

Recall that $n=2^sm$. Let us next consider the divisibility of  $r+1$ and $r-1$ by $2$. Given the possibilities for $\rho(P)$, there are five cases, which we summarize below.
\begin{align}& \vp_2(r-1)\geq s &&\und  &&\vp_2(r+1)\geq s &&\mbox{ if $\rho(P)=1$ with $s\leq 1$},\label{table-r}\\\nn
& \vp_2(r-1)\geq s &&\und  &&\vp_2(r+1)=1 &&\mbox{ if $\rho(P)=1$ with $s\geq 2$},\\\nn
& \vp_2(r-1)=1 &&\und  &&\vp_2(r+1)\geq s &&\mbox{ if $\rho(P)=-1+2^s$ with $s\geq 2$},\\\nn
 & \vp_2(r-1)=1&&\und &&\vp_2(r+1)=s-1 &&
 \mbox{ if $\rho(P)=-1+2^{s-1}$ with $s\geq 3$, and}\\\nn
& \vp_2(r-1)=s-1&&\und &&\vp_2(r+1)=1 &&\mbox{ if $\rho(P)=1+2^{s-1}$ with $s\geq 3$.}
\end{align}
Consequently, letting $$n=n^+n^-\quad\mbox{ with }\quad n^-=\gcd(r-1,n),$$ we discover that
\begin{align}
& n^-=2^sm^-&&\und &&n^+=m^+ &&\mbox{ if $\rho(P)=1$ with $s\geq 0$}\label{tabel-n},\\\nn
& n^-=2m^- &&\und && n^+=2^{s-1}m^+ &&\mbox{ if $\rho(P)=-1+2^s$ or $-1+2^{s-1}$  with $s\geq 2$, and} \\\nn
& n^-=2^{s-1}m^-&&\und && n^+=2m^+ &&\mbox{ if $\rho(P)=1+2^{s-1}$ with $s\geq 3$.}
\end{align} Observe that $n^+\mid r+1$ in all cases, while $n^-$ is even except when $s=0$.
With the above notation in hand, let us now characterize some of the important subgroups of $G$.

\begin{lemma}\label{lemma-techI}
Let $G$ satisfy the General Assumptions for Section \ref{sec-index2}.
Then $$G'=\la \alpha^{r-1}\ra=\la \alpha^{n^-}\ra\quad \und\quad \mathsf Z(G)=\left\{
                                                    \begin{array}{ll}
                                                      G, & \hbox{if $r=1$} \\
                                                      \la \alpha^{n^+}\ra, & \hbox{if $r\neq 1$.}
                                                    \end{array}
                                                  \right.
$$
 In particular, $G$ is non-abelian if and only if $r\neq 1$,
in which case $|G'|=n^+$ and $|\mathsf Z(G)|=n^-$. 
\end{lemma}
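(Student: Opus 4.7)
\medskip

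The plan is to handle the commutator subgroup and the center separately, noting that for all three types (A), (B), (C) of Lemma \ref{Generic-Pres-index2} the key defining relation is $\alpha\tau=\tau\alpha^r$, so the computations will be uniform.

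For the commutator subgroup, first I would compute the basic commutator
\[
[\alpha,\tau]=\alpha^{-1}\tau^{-1}\alpha\tau=\alpha^{-1}\alpha^r=\alpha^{r-1},
\]
using that $\alpha\tau=\tau\alpha^r$ gives $\tau^{-1}\alpha\tau=\alpha^r$. This yields $\alpha^{r-1}\in G'$, and hence $\langle\alpha^{r-1}\rangle\leq G'$. For the reverse inclusion, I would work modulo $N:=\langle\alpha^{r-1}\rangle$: since $\alpha\tau=\tau\alpha^r=\tau\alpha\cdot\alpha^{r-1}\equiv\tau\alpha\pmod{N}$, the two generators commute in $G/N$, so $G/N$ is abelian and thus $G'\leq N$. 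Combining, $G'=\langle\alpha^{r-1}\rangle$. Since $\alpha$ has order $n$, the cyclic subgroup generated by $\alpha^{r-1}$ coincides with $\langle\alpha^{\gcd(r-1,n)}\rangle=\langle\alpha^{n^-}\rangle$ and has order $n/n^-=n^+$.

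For the center, by Lemma \ref{Generic-Pres-index2} every element of $G$ is of the form $\alpha^i$ or $\tau\alpha^j$. I would first determine which powers $\alpha^i$ lie in $\mathsf Z(G)$. Since $\alpha^i$ commutes with $\alpha$, it lies in $\mathsf Z(G)$ iff it commutes with $\tau$; and $\alpha^i\tau=\tau\alpha^{ri}$ by iterating the relation, so the condition is $\alpha^{i(r-1)}=1$, i.e. $n\mid i(r-1)$, i.e. $n^+=n/n^-\mid i$. Thus $\langle\alpha\rangle\cap\mathsf Z(G)=\langle\alpha^{n^+}\rangle$, a group of order $n^-$. Next I would check whether any element $\tau\alpha^j$ can lie in the center by testing commutativity with $\alpha$: from $\alpha(\tau\alpha^j)=\tau\alpha^{r+j}$ versus $(\tau\alpha^j)\alpha=\tau\alpha^{j+1}$, we need $\alpha^{r-1}=1$, i.e. $n\mid r-1$, which (since $r\in[1,n]$) forces $r=1$. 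If $r=1$, the defining relation becomes $\alpha\tau=\tau\alpha$, so $G$ is abelian and $\mathsf Z(G)=G$. Otherwise no element outside $\langle\alpha\rangle$ is central, and we get $\mathsf Z(G)=\langle\alpha^{n^+}\rangle$ of order $n^-$.

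The final ``in particular'' statement follows: $G$ is abelian iff $G'=\{1\}$ iff $\alpha^{r-1}=1$ iff $r=1$, while the formulas $|G'|=n^+$ and $|\mathsf Z(G)|=n^-$ were just established. There is no serious obstacle here; the only point requiring a little care is verifying that the argument for $G'$ holds uniformly across all three presentation types, which it does because neither $\tau^2=1$, \ $\tau^2=\alpha^{n/2}$, nor $\tau^2=\alpha^m$ enters into the commutator computation—only the conjugation relation $\alpha\tau=\tau\alpha^r$ is used.
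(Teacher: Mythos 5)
Your argument is correct and rests on the same elementary computation as the paper, namely exploiting the single relation $\alpha\tau=\tau\alpha^r$ (which indeed is all that is needed, uniformly across types (A), (B), (C)). The one place where you deviate is the reverse inclusion $G'\leq \la\alpha^{r-1}\ra$: the paper computes the commutator of two arbitrary elements, $[\tau^a\alpha^x,\tau^b\alpha^y]=\alpha^{(r^b-1)x-(r^a-1)y}$, and reads off that every commutator lies in $\la\alpha^{r-1}\ra$, whereas you pass to the quotient by $N=\la\alpha^{r-1}\ra$ and observe that the two generators commute there. Your route is shorter, but it tacitly uses that $N$ is normal in $G$ (otherwise ``$G/N$ is abelian'' makes no sense); this should be said explicitly, though it is immediate: $N\leq\la\alpha\ra$ is a subgroup of a cyclic normal subgroup, hence characteristic in $\la\alpha\ra$ and so normal in $G$, or directly $\tau^{-1}\alpha^{r-1}\tau=\alpha^{r(r-1)}\in N$. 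With that one line added, your proof of $G'=\la\alpha^{r-1}\ra=\la\alpha^{n^-}\ra$ and your determination of $\mathsf Z(G)$ (central powers of $\alpha$ are exactly those with $n^+\mid i$, and no element of $\tau\la\alpha\ra$ is central unless $r=1$) match the paper's conclusions, including the orders $|G'|=n^+$ and $|\mathsf Z(G)|=n^-$.
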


\begin{proof}
Let $\tau^{a}\alpha^{x},\,\tau^{b}\alpha^{y}\in G$ be arbitrary elements, where $a,\,b\in \{0,1\}$ and $x,\,y\in [0,n-1]$. Then
\ber \nn[\tau^{a}\alpha^{x},\,\tau^{b}\alpha^{y}]&=&
\alpha^{-x}\tau^{-a}\alpha^{-y}\tau^{-b}\tau^{a}\alpha^{x}\tau^{b}\alpha^{y}\\
&=& \alpha^{-x-r^{a}y+r^{b}x+y}=\alpha^{(r^b-1)x-(r^a-1)y}.\label{com-relat}\eer
Since $r-1$ divides both $r^a-1$ and $r^b-1$, we see from \eqref{com-relat} that all commutator elements live in the subgroup $\la \alpha^{r-1}\ra$. Moreover, taking $a=y=1$ and $x=0$, we see that $\alpha^{r-1}$ is itself a commutator element. This shows that  $G'=\la \alpha^{r-1}\ra$. In particular, $G$ is abelian if and only if $r=1$. Moreover, $\ord(\alpha^{r-1})=\frac{n}{\gcd(r-1,n)}=n^+=\ord(\alpha^{n^-})$, so that $|G'|=n^+$ and $G'=\la \alpha^{r-1}\ra=\la \alpha^{n^-}\ra$  (in view of a finite cyclic group of order $n$ containing a unique subgroup of any given order dividing $n$).

If $r=1$, then $G$ is abelian and $\mathsf Z(G)=G$. Let us next determine $\mathsf Z(G)$ when $r\neq 1$. The element $\tau^{a}\alpha^{x}$ lies in the center of $G$ precisely when \eqref{com-relat} is equal to $1$ for all $b\in \{0,1\}$ and $y\in [0,n-1]$. If $a=1$, then the values $b=0$ and $y=1$ yield a non-identity value in \eqref{com-relat} in view of $r\neq 1$. Therefore $\mathsf Z(G)\leq \la \alpha\ra$. If $a=0$, then taking the value $b=1$ in \eqref{com-relat} shows that only values $x\in [0,n-1]$ with $(r-1)x\equiv 0\mod n$ can correspond to elements of the center. Hence we must have $x\equiv 0\mod n^+$, which means that $\mathsf Z(G)\leq \la \alpha^{n^+}\ra$. However, it is easily seen from \eqref{com-relat} that $\alpha^{n^+}\in \mathsf Z(G)$, whence $\mathsf Z(G)=\la \alpha^{n^+}\ra$. Since $\ord(\alpha^{n^+})=n^-$, we have $|\mathsf Z(G)|=n^-$.
\end{proof}

The following lemma gives a non-cyclic subgroup isomorphic to $C_2\times C_{n^-}$ in most cases, which can then be combined with  Theorem \ref{thm-induc-bound} to bound $\mathsf D(G)$.

\begin{lemma}\label{lemma-techII}
Let $G$ satisfy the General Assumptions for Section \ref{sec-index2}. If $P$ is neither cyclic nor dicyclic,  then
 $$\mathsf C_G(\tau)=\la \alpha^{n^+},\,\tau\ra\cong C_2\times C_{n^-}\quad\mbox{ is non-cyclic}.$$
\end{lemma}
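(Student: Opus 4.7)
The plan is to compute $\mathsf C_G(\tau)$ directly from the defining relations, and then exploit the classification result Theorem \ref{thm-full-index2-class} to see that outside the cyclic and dicyclic cases we must have $\tau^2 = 1$, which forces $\la \alpha^{n^+},\tau\ra$ to split as an internal direct product $\la\alpha^{n^+}\ra\times \la\tau\ra$ of a cyclic group of order $n^-$ and a cyclic group of order $2$.

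\medskip

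First I would describe $\mathsf C_G(\tau)$ abstractly. A generic element of $G$ has the form $\tau^a\alpha^x$ with $a\in\{0,1\}$ and $x\in[0,n-1]$. Using $\alpha\tau=\tau\alpha^r$ (equivalently $\alpha^x\tau=\tau\alpha^{rx}$), I would compute
\[
(\tau^a\alpha^x)\tau=\tau^{a+1}\alpha^{rx}\quad\und\quad \tau(\tau^a\alpha^x)=\tau^{a+1}\alpha^{x},
\]
so $\tau^a\alpha^x\in \mathsf C_G(\tau)$ if and only if $(r-1)x\equiv 0\pmod n$, i.e.\ $n^+\mid x$. Hence $\mathsf C_G(\tau)=\la \alpha^{n^+},\tau\ra$.

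\medskip

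Next I would invoke the hypothesis that $P$ is neither cyclic (type (i)) nor dicyclic (type (iv)), so $P$ must be one of types (ii), (iii), (v), (vi) in Lemma \ref{lem-2-group-class}. By Theorem \ref{thm-full-index2-class}, $G$ then has type (A) in Lemma \ref{Generic-Pres-index2}, which means $\tau^2=1$. Together with $\alpha^{n^+}\in\mathsf Z(G)$ (Lemma \ref{lemma-techI}, valid whether $r=1$ or $r\neq 1$), this shows that $\la \alpha^{n^+}\ra$ and $\la\tau\ra$ centralize each other. Since $\la\alpha\ra$ is an index $2$ subgroup not containing $\tau$, we have $\la\alpha^{n^+}\ra\cap\la\tau\ra\leq\la\alpha\ra\cap\la\tau\ra=\{1\}$, so
\[
\mathsf C_G(\tau)=\la \alpha^{n^+},\tau\ra=\la \alpha^{n^+}\ra\times\la\tau\ra\cong C_{n^-}\times C_2,
\]
using $\ord(\alpha^{n^+})=n^-$.

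\medskip

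Finally, to see this group is non-cyclic, it suffices to check that $n^-$ is even in each of the four remaining cases, so that the direct factor $C_2$ is not absorbed. Reading off from \eqref{tabel-n}: for $P$ of type (ii) we have $\rho(P)=1$ with $s\geq 1$, so $n^-=2^sm^-$ is even; for $P$ of type (iii) we have $\rho(P)=-1+2^s$ with $s\geq 2$, so $n^-=2m^-$ is even; for $P$ of type (v) we have $\rho(P)=-1+2^{s-1}$ with $s\geq 3$, so again $n^-=2m^-$ is even; and for $P$ of type (vi) we have $\rho(P)=1+2^{s-1}$ with $s\geq 3$, so $n^-=2^{s-1}m^-$ is even. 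In every case $2\mid n^-$, confirming that $C_{n^-}\times C_2$ is non-cyclic.

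\medskip

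There is no serious obstacle: the only slightly delicate point is ensuring we land in type (A) (so $\tau^2=1$), which is exactly what the exclusion of cyclic and dicyclic $P$ gives us via Theorem \ref{thm-full-index2-class}. Once that is in hand, the direct-product decomposition and the evenness of $n^-$ are immediate case-by-case verifications.
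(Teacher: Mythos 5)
Your proof is correct, and while the first half coincides with the paper's (the same computation from $\alpha\tau=\tau\alpha^r$ showing $\tau^a\alpha^x$ centralizes $\tau$ iff $(r-1)x\equiv 0\bmod n$, i.e.\ $n^+\mid x$, and the same appeal to Theorem \ref{thm-full-index2-class} to land in type (A)), your route to the isomorphism type and non-cyclicity is genuinely different and simpler. The paper only observes that $\mathsf C_G(\tau)=\la\alpha^{n^+},\tau\ra$ is abelian of order $2n^-$, hence isomorphic to $C_2\times C_{n^-}$ or $C_{2n^-}$, and then rules out the cyclic possibility by bounding the order of every element $\tau\alpha^{xn^+}$: using $\tau^2=1$ it computes $(\tau\alpha^{xn^+})^2=\alpha^{(r+1)xn^+}$ as in \eqref{topofwhat} and shows $\ord(\alpha^{(r+1)xn^+})\le\frac12 n^-$, which requires the arithmetic input $m^+\mid r+1$, $m^-\mid\frac12 n^-$, and $\vp_2(r+1)\ge 1$ from \eqref{table-r}. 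You instead use $\tau^2=1$ (type (A)) together with $\alpha^{n^+}\in\mathsf Z(G)$ (Lemma \ref{lemma-techI}) and $\la\alpha^{n^+}\ra\cap\la\tau\ra=\{1\}$ to split $\mathsf C_G(\tau)$ as the internal direct product $\la\alpha^{n^+}\ra\times\la\tau\ra\cong C_{n^-}\times C_2$ outright, so the only remaining point is $2\mid n^-$, which you verify case by case from \eqref{tabel-n} exactly as the paper does in one line at the start of its proof. Both arguments hinge on type (A), i.e.\ $\tau^2=1$; yours exploits it structurally (an order-$2$ central element of the centralizer splitting off as a direct factor) and thereby avoids the order estimates and the use of \eqref{table-r} altogether, while the paper's element-order computation is more pedestrian but verifies directly that no generator of a cyclic group of order $2n^-$ can exist. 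Your proof is complete as written; no gaps.
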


\begin{proof}
Since $P$ is neither cyclic nor dicyclic, Theorem \ref{thm-full-index2-class} shows that  $G$ must have type (A) with $s\geq 1$. In view of  \eqref{tabel-n} and $s\geq 1$,  we have $n^-$ even, whence $C_2\times C_{n^-}$ is non-cyclic.

Let $\tau^a\alpha^x\in G$ be arbitrary, where $a\in \{0,1\}$ and $x\in [0,n-1]$. Then \be\label{comcat} [\tau,\tau^a\alpha^x]=
\tau^{-1}\alpha^{-x}\tau^{-a}\tau\tau^a\alpha^x=\alpha^{-(r-1)x}.
\ee
Now \eqref{comcat} is equal to $1$ precisely when $x\equiv 0\mod n^+$, which means that $\mathsf C_G(\tau)=\la \alpha^{n^+},\,\tau\ra$ with $|\mathsf C_G(\tau)|=2n^-$. In view of Lemma \ref{lemma-techI}, we know $\alpha^{n^+}\in \mathsf Z(G)$, which forces $\mathsf C_G(\tau)=\la \alpha^{n^+},\,\tau\ra$ to be abelian. Consequently, since $\ord(\alpha^{n^+})=n^-$ and $|\mathsf C_G(\tau)|=2n^-$, we conclude that $\mathsf C_G(\tau)$ is isomorphic to either
$C_2\times C_{n^-}$ or $C_{2n^-}$. Thus to complete the proof, we simply need to show that $$\ord(\tau \alpha^{xn^+})<2n^-\quad\mbox{ for all $x\in [0,n^--1]$}.$$ To this end, let $x\in [0,n^--1]$ be arbitrary.
Since $G$ has type (A), we have \be\label{topofwhat}(\tau \alpha^{xn^+})^2=\alpha^{(r+1)xn^+}\quad\mbox{ with } \quad \ord(\tau\alpha^{xn^+})=2\,\ord(\alpha^{(r+1)xn^+}).\ee
Recall that $n^-$ is even (in view of $s\geq 1$),  that $m^+\mid n^+$, that  $m^-\mid n^-$ and  that $m^+m^-=m$ is odd. Thus $$\frac 12 n^-(r+1)xn^+\equiv m^-m^+\equiv m\equiv 0\mod m.$$ As a result, $\ord(\alpha^{(r+1)xn^+})\leq \frac12 n^-$ will follow, proving that $\mathsf C_G(\tau)$ is non-cyclic in view of \eqref{topofwhat}, provided $$\vp_2(\frac12n^-(r+1)n^+)=\vp_2((r+1)n)-1\geq s=\vp_2(n),$$ i.e., provided $\vp_2(r+1)\geq 1$. However, in view of
 \eqref{table-r} and $s\geq 1$, we see that this is indeed the  case, completing the proof.
 \end{proof}

Next, we give the lower bound for $\mathsf D(G)$.

\begin{lemma}\label{index-2-lower}
Let $G$ satisfy the General Assumptions for Section \ref{sec-index2}. Then $$\frac12 |G|+|G'|\leq \mathsf D(G).$$
\end{lemma}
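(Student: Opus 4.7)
The plan is to construct, in each case, an explicit minimal product-one sequence (atom) of length $n+n^+=\tfrac12|G|+|G'|$, giving directly $\mathsf D(G)\ge n+n^+$.

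If $G$ is cyclic, take $\alpha$ a generator of $G$, so that $|\alpha|=2n$. The sequence $U=\alpha^{[\,2n\,]}$ is product-one since $\alpha^{2n}=1$; and for any proper subsequence $\alpha^{[k]}$ with $k\in[1,2n-1]$, $\pi(\alpha^{[k]})=\{\alpha^k\}\neq\{1\}$, so $U$ is an atom and $\mathsf D(G)\ge 2n\ge n+1=\tfrac12|G|+|G'|$. If $G$ is non-cyclic but abelian (the case $r=1$ of Lemma \ref{Generic-Pres-index2}), then $|G'|=1$ and the bound reduces to $\mathsf D(G)\ge\mathsf d(G)+1$, which is Lemma \ref{dav-up-lemma}(1).

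Assume now that $G$ is non-abelian, so $r\neq 1$. The atom $U$ is built from a ``bulk'' part lying in the cyclic subgroup $\langle\alpha\rangle$ of total length $n+n^+-2$, together with a trailing pair of two terms from the non-identity coset $\tau\langle\alpha\rangle$, chosen so that $\pi(U)\ni 1$. A prototype, which works when $n$ is odd and $G'=\langle\alpha\rangle$ (as in generalized dihedral $D_{2n}$ with $n$ odd), is
\[
U \;=\; \alpha^{[\,2n-2\,]}\,\bdot\,\tau^{[\,2\,]},
\]
of length $2n=n+n^+$. For other subcases the trailing pair becomes $\tau\bdot\tau\alpha^{c}$ with $c\in[0,n-1]$, and the $\alpha$-bulk takes the form $\alpha^{[n-1]}\bdot (\alpha^{n^-})^{[n^+-1]}$ (recalling from Lemma \ref{lemma-techI} that $G'=\langle\alpha^{n^-}\rangle$ has order $n^+$), with $c$ selected modulo $n$ so that both $\pi(U)\ni 1$ and every candidate factorization fails. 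To verify atomicity, I would argue that any non-trivial, product-one subsequence $T\mid U$ must contain both $\tau$-coset terms or neither (a single $\tau$-coset term would force $\pi(T)\not\subset G'$, contradicting $1\in\pi(T)$), and in each subcase the complementary subsequence $T^{[-1]}\bdot U$ --- which lies in the coset $G'$ by Lemma \ref{lemma-G'} --- is checked to avoid having $1$ in its product set by counting achievable signed sums of its $\alpha$-part modulo $n$, using the structural relations $\alpha\tau=\tau\alpha^r$ and $\tau^2\in\{1,\alpha^{n/2},\alpha^m\}$.

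The main obstacle is that the construction is not quite uniform: the exponent $c$ in the trailing pair $\tau\bdot\tau\alpha^{c}$ and the precise multiplicities in the $\alpha$-bulk must be tailored to each of the six Sylow-$2$ types from Lemma \ref{lem-2-group-class} and the corresponding residue conditions on $r$, $n^-$, $n^+$ given in \eqref{table-r}--\eqref{tabel-n}.  The most delicate subcase is the dicyclic/generalized quaternion family, where the relation $\tau^2=\alpha^{n/2}\neq 1$ disrupts the cleanest version of the argument (since the trailing pair no longer has product $1$ on its own); this is exactly the reason Lemma \ref{lem-factorizationI} was developed in Section~\ref{sec-add}, reducing the non-abelian atomicity question in groups such as $Q_{4p}$ to a zero-sum question over $C_{2p}$, where Theorem \ref{DGM-devos-etal-thm} then applies. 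Once this case analysis is completed the inequality $\mathsf D(G)\ge n+n^+$ follows uniformly.
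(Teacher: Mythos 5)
Your overall strategy (exhibit an explicit atom of length $n+n^+$) is the same as the paper's, but as written there is a genuine gap: the construction and the atomicity verification are only actually carried out in the easiest subcase (dihedral type with the trailing pair $\tau\bdot\tau$, i.e.\ type (A) with $n^-=1$). For all other groups you only say that the trailing pair should be $\tau\bdot\tau\alpha^{c}$ with $c$ and the $\alpha$-multiplicities ``tailored'' to the six Sylow $2$-types, and you acknowledge that the cases with $\tau^2\neq 1$ (types (B) and (C), e.g.\ generalized quaternion/dicyclic groups and the groups with cyclic Sylow $2$-subgroup) disrupt the argument. Your proposed rescue---invoking Lemma \ref{lem-factorizationI} and Theorem \ref{DGM-devos-etal-thm}---does not fill this gap: that machinery is used in the paper to prove the \emph{upper} bound for $Q_{4p}$ (to show that long sequences are \emph{not} atoms, Lemma \ref{lemma-dicyclic}), and it gives no help in certifying that a specific sequence of length $n+n^+$ \emph{is} an atom, which is what the lower bound requires. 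So the hard half of the case analysis is missing, not merely routine.

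The paper avoids the case analysis entirely with one uniform sequence,
\[
U=(\tau^{-1}\alpha)\bdot \alpha^{[n^+-1]}\bdot(\tau\alpha^{1-n^+})\bdot\alpha^{[n-1]},
\]
of length $n+n^+$. Because the two terms outside $\la\alpha\ra$ are $\tau^{-1}\alpha$ and $\tau\alpha^{1-n^+}$ (rather than $\tau$ and $\tau\alpha^{c}$), any product of the terms of $U$ involves $\tau^{-1}\cdots\tau$, so the value of $\tau^2$ never enters; using $\alpha^{n^+}\in\mathsf Z(G)$ one checks $1\in\pi(U)$, and atomicity follows in a few lines: in a putative factorization $U=V\bdot W$ one factor, say $V$, lies in $\Fc(\la\alpha\ra)$, forcing $V=\alpha^{[n]}$ and $W=\alpha^{[n^+-2]}\bdot(\tau^{-1}\alpha)\bdot(\tau\alpha^{1-n^+})$, and then $1\in\pi(W)$ would give $\alpha^{(r-1)(k+1)}=1$ for some $k+1\in[1,n^+-1]$, impossible since $\ord(\alpha^{r-1})=n^+$. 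If you want to keep your approach, you would need to produce and verify the analogous explicit atoms for types (B) and (C) yourself; replacing one of your trailing $\tau$-terms by an element of the form $\tau^{-1}\alpha^{c}$ is exactly the trick that makes this uniform.
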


\begin{proof}
From Lemma \ref{lemma-techI}, we know $|G'|=n^+$.
Consider the sequence
$$U=(\tau^{-1}\alpha) \bdot \alpha^{[n^+-1]}\bdot (\tau\alpha^{1-n^+})\bdot \alpha^{[n-1]}\in \Fc(G).$$

Then $|U|=n+n^+=\frac12|G|+|G'|$.
 Since  $(\tau^{-1}\alpha)\alpha^{n^+-1}(\tau\alpha^{1-n^+})\alpha^{n-1}=1$---as is easily seen by recalling from Lemma \ref{lemma-techI} that $\alpha^{n^+}\in \mathsf Z(G)$---it is clear that $U$ is a product-one sequence. Thus to complete the proof, we need to show that $U\in \mathcal A(G)$ is an atom.

Assume to the contrary that we have a factorization $U = V \bdot W$ with
$V,\, W \in \mathcal B (G)$ both nontrivial.
Since $V$ and $W$ are product-one sequences, we have (without restriction) $V \in \mathcal F (\langle \alpha \rangle)$ and
$(\tau^{-1}\alpha) \bdot (\tau\alpha^{1-n^+}) \mid  W$. Hence $V =  \alpha^{[n]}$ and $W =\alpha^{[n^+-2]} \bdot ( \tau^{-1}\alpha) \bdot ( \tau\alpha^{1-n^+})$. Thus there exists a $k \in [0,n^+-2]$ such that
$1 =(\tau^{-1}\alpha)\alpha^k(\tau\alpha^{1-n^+})\alpha^{n^+-2-k} \in  \pi (W)$---in view of Lemma
\ref{permutaton-lemma}, cyclically shifting the terms in a product-one ordered sequence preserves that the sequence has product-one, so we can w.l.o.g. assume our product-one expression  starts with $\tau^{-1}\alpha$. Since $1=(\tau^{-1}\alpha)\alpha^k(\tau\alpha^{1-n^+})\alpha^{n^+-2-k}=
\alpha^{(r-1)(k+1)}$,  it follows that $k+1\in [1,n^+-1]$ must be a multiple of $\ord(\alpha^{r-1})$. However, since $n^-=\gcd(r-1,n)$ with $n=n^+n^-$, it follows that $\ord(\alpha^{r-1})=n^+$, so that  $k+1\in [1,n^+-1]$ cannot be  a multiple of $\ord(\alpha^{r-1})$.
This contradiction establishes the desired lower bound for $\mathsf D(G)$.
\end{proof}

The next lemma reduces the problem of finding a matching upper bound for $\mathsf D(G)$ to the case when $|G'|=n^+$ is prime.

\begin{lemma}\label{lemma-reduce-to-prime}
Let $G$ satisfy the General Assumptions for Section \ref{sec-index2}. Suppose $G$ is non-abelian, let $p$ be a prime divisor of $|G'|=n^+$, and let $$H=\la \alpha^{\frac{n^+}{p}},\, \tau\ra\leq G.$$
Then $H$ has a cyclic, index $2$ subgroup and $|H'|=p$, where $H'=[H,H]\leq H$ is the commutator subgroup of $H$. In particular, if $\mathsf D(H)\leq \frac12 |H|+|H'|$, then $\mathsf D(G)\leq \frac12|G|+|G'|$.
\end{lemma}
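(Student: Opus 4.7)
The plan is: identify an obvious cyclic, index-$2$ subgroup of $H$; compute $|H'|$ by a commutator calculation; then invoke Theorem \ref{thm-induc-bound}, with the arithmetic conspiring so that the factor of $p$ introduced in $|H'|$ cancels exactly with the factor gained in the index $[G:H]$.

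First I would verify that $K := \langle \alpha^{n^+/p}\rangle$ is a cyclic subgroup of index $2$ in $H$. Since $n^+/p$ divides $n^+\mid n$, the element $\alpha^{n^+/p}$ has order $n/(n^+/p) = pn^-$, so $K$ is cyclic of that order. The relation $\alpha\tau = \tau\alpha^r$ yields $\tau^{-1}\alpha^{n^+/p}\tau = \alpha^{rn^+/p} \in K$, so $\tau$ normalizes $K$; since $\tau \notin \langle\alpha\rangle \supseteq K$, we obtain the coset decomposition $H = K \cup \tau K$ with $|H| = 2pn^-$, confirming that $K\leq H$ is a cyclic, index $2$ subgroup.

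Next I would compute $|H'|=p$. Applying the commutator formula \eqref{com-relat} from Lemma \ref{lemma-techI} with $\alpha^{n^+/p}$ and $\tau$ in place of $\alpha$ and $\tau$ (the conjugation exponent remains $r$ modulo $pn^-$), one obtains $H' = \langle \alpha^{(r-1)n^+/p}\rangle \leq K$, of order $pn^-/\gcd(r-1,\,pn^-)$. The key arithmetic observation is that $n^- = \gcd(r-1,\,n^+n^-)$ forces $(r-1)/n^-$ to be coprime to $n^+$, hence coprime to $p$; so $\gcd(r-1,\,pn^-) = n^-$ and $|H'|=p$.

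With the structural claims in hand, the reduction is immediate from Theorem \ref{thm-induc-bound}: $\mathsf D(G) \leq \mathsf D(H)\cdot[G:H]$, with $[G:H] = |G|/|H| = 2n/(2pn^-) = n^+/p$, a positive integer since $p\mid n^+$. Under the hypothesis $\mathsf D(H) \leq \tfrac{1}{2}|H|+|H'| = pn^-+p$, we conclude
\[
\mathsf D(G) \leq (pn^-+p)\cdot \frac{n^+}{p} = n^+n^- + n^+ = n + n^+ = \tfrac{1}{2}|G|+|G'|,
\]
as desired. The only nontrivial point in the whole argument is the coprimality $\gcd\bigl((r-1)/n^-,\, n^+\bigr)=1$; without this precise cancellation the inductive step passing from $G$ to $H$ would lose information and the chain of inequalities would fail.
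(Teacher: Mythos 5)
Your overall strategy coincides with the paper's (same subgroup $H$, same commutator computation, same application of Theorem \ref{thm-induc-bound}), and your gcd argument giving $|H'|=p$ as well as the closing arithmetic are correct. However, there is a genuine gap at the first structural step. From ``$\tau$ normalizes $K=\la\alpha^{n^+/p}\ra$ and $\tau\notin\la\alpha\ra$'' you conclude $H=K\cup\tau K$ and $|H|=2pn^-$. That inference is invalid: the index of $K$ in $H=\la K,\tau\ra$ is $2$ only if $\tau^2\in K$. In general $\tau^2$ is a nontrivial power of $\alpha$ (in types (B) and (C) of Lemma \ref{Generic-Pres-index2} one has $\tau^2=\alpha^{n/2}$, resp.\ $\tau^2=\alpha^{m}$), and whether this power lies in $K=\la\alpha^{n^+/p}\ra$ is precisely what must be checked. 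If it did not, then $H\cap\la\alpha\ra$ would be strictly larger than $K$, so $K$ would not have index $2$ in $H$, $|H|$ would exceed $2pn^-$, $|H'|$ need not equal $p$, and the index computation $|G:H|=n^+/p$ on which your final chain of inequalities rests would fail.

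The verification that $\tau^2\in K$ is true but type-dependent and uses the classification data, and this is exactly the part of the paper's proof that your argument omits: in type (A) it is trivial since $\tau^2=1$; in type (B) one needs $n^-$ even (which holds since $s\geq 1$ there) to see that $\bigl(\tfrac{n^+}{p}\bigr)\tfrac{pn^-}{2}=\tfrac{n}{2}$; in type (C) one needs $n^+=m^+$ (which holds because $P$ is cyclic, by \eqref{tabel-n}) to see that $\bigl(\tfrac{n^+}{p}\bigr)m^-p=m$. Note also that your commutator computation tacitly uses $H=K\cup\tau K$, since it writes every element of $H$ as $\tau^a\beta^x$ with $\beta=\alpha^{n^+/p}$, so it too depends on the missing verification. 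Once this case analysis is supplied, the remainder of your proof --- $H'=\la\alpha^{(r-1)n^+/p}\ra$, the coprimality $\gcd\bigl((r-1)/n^-,\,n^+\bigr)=1$ yielding $|H'|=p$, and the application of Theorem \ref{thm-induc-bound} --- matches the paper's argument and is correct.
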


\begin{proof}
Observe that $\ord(\alpha^{\frac{n^+}{p}})=\ord(\alpha)\frac{p}{n^+}=n^-p$. If $G$ has type (A), then $\tau^2=1\in \la \alpha^{\frac{n^+}{p}}\ra$. If $G$ has type (B), then $s\geq 1$ and $n^-$ is even. Thus  $\tau^2=\alpha^{\frac{n}{2}}\in \la \alpha^{\frac{n^+}{p}}\ra$ since $(\frac{n^+}{p})\frac{pn^-}{2}=\frac{n}{2}$ with $2\mid n^-$.
If $G$ has type (C), then $P$ is cyclic. Hence \eqref{tabel-n} implies that  $m^+=n^+$, and now $\tau^2=\alpha^m\in \la \alpha^{\frac{n^+}{p}}\ra$ holds in view of  $\left(\frac{n^+}{p}\right) m^-p=\frac{m^+m^-p}{p}=m$.
In all cases, we  conclude that $$|H|=2\,\ord(\alpha^{\frac{n^+}{p}})=2n^-p,$$ so that $\la \alpha^{\frac{n^+}{p}}\ra\leq H$  is a cyclic, index $2$ subgroup.

Next, let us compute $H'\leq H$.
Let $\tau^{a}\alpha^{x},\,\tau^{b}\alpha^{y}\in H$ be arbitrary elements, where $a,\,b\in \{0,1\}$,  $x,\,y\in [0,n-1]$ and $x\equiv y\equiv 0\mod \frac{n^+}{p}$. Then (as in Lemma \ref{lemma-techI})
\ber [\tau^{a}\alpha^{x},\,\tau^{b}\alpha^{y}]=\alpha^{(r^b-1)x-(r^a-1)y}.
\label{com-relatIIh}\eer
Since $x\equiv y\equiv 0\mod \frac{n^+}{p}$ and since $r-1$ divides both $r^b-1$ and $r^a-1$, we see from \eqref{com-relatIIh} that all commutator elements live in the subgroup $\la \alpha^{(r-1)\frac{n^+}{p}}\ra$. Moreover, taking $a=1$, $y=\frac{n^+}{p}$ and $x=0$, we see that $\alpha^{(r-1)\frac{n^+}{p}}$ is itself a commutator element. This shows that  $H'=\la \alpha^{(r-1)\frac{n^+}{p}}\ra$. In consequence, since $\gcd(r-1,n)=n^-$ and $n=n^+n^-$, it follows that $|H'|=p$.

Now $|H|=2n^-p$, \, $|H'|=p$, \ $|G'|=n^+$ (from Lemma \ref{lemma-techI}) and $|G:H|=\frac{2n}{2n^-p}=\frac{n^+}{p}$. Thus, if $\mathsf D(H)\leq \frac12 |H|+|H'|$, then Theorem \ref{thm-induc-bound} yields
\[
\mathsf D(G)\leq \mathsf D(H)|G:H|\leq (\frac12|H|+|H'|)|G:H|=\frac12|G|+|H'||G:H|=n+n^+=\frac12|G|+|G'| \,. \qedhere
\]
\end{proof}

The following lemma handles the case when there are a sufficient number of terms from $\la \alpha\ra$.

\begin{lemma}\label{lemma-lotsofalplha}
Let $G$ satisfy the General Assumptions for Section \ref{sec-index2}. Suppose  $n^+=p$ is prime and let $U\in \Fc(G)$ be a product-one sequence. If  $|U|\geq n+p+1$ and $U$ contains at least $p-1$ terms from $\la \alpha\ra\setminus \mathsf Z(G)$, then $U$ is not an atom.
\end{lemma}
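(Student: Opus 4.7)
I argue by contradiction, supposing $U\in \mathcal A(G)$ is an atom; the aim is to exhibit a nontrivial, proper subsequence $T\mid U$ such that both $T$ and $U\bdot T^{[-1]}$ are product-one, contradicting atomicity. Fix any ordering $U^*\in \Fc^*(G)$ with $[U^*]=U$ and $\pi(U^*)=1$, and decompose
\[
U^* = A_0\bdot h_1\bdot A_1\bdot h_2\bdot \ldots \bdot h_k\bdot A_k,
\]
where each $h_j=\tau\alpha^{c_j}$ is a $\tau$-term and each $A_j$ is a (possibly empty) consecutive block of $\la\alpha\ra$-terms with $\pi(A_j)=\{\alpha^{s_j}\}$. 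Because $G/\la\alpha\ra\cong C_2$, the integer $k$ is even. Using $\alpha^a\tau=\tau\alpha^{ar}$ together with $r^2\equiv 1\pmod n$ (valid by Theorem~\ref{thm-full-index2-class}, which further forces $r\equiv 1\pmod{n^-}$ and, when $n^+=p$ is odd, $r\equiv -1\pmod p$), a straightforward induction on $k$ yields the product formula
\[
\pi(U^*)=\tau^k\alpha^{E+rO},\qquad E=s_0+\sum_{j\text{ even},\,j\geq 2}(c_j+s_j),\qquad O=\sum_{j\text{ odd}}(c_j+s_j).
\]

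For the reduction, given any subsequence $T\mid U$ consisting entirely of $\la\alpha\ra$-terms, write $T=T_E\bdot T_O$ according to whether each term of $T$ lies in an even- or odd-indexed block of $U^*$, and let $e(T)$ denote the sum of the $\alpha$-exponents of $T$'s terms (so $\pi(T)=\{\alpha^{e(T)}\}$ since $\la\alpha\ra$ is abelian). Removing $T$ from $U^*$ yields an ordering of $U\bdot T^{[-1]}$ whose product is $\alpha^{-e(T_E)-r\cdot e(T_O)}$; combining this with $\pi(U^*)=1$ and the above congruences on $r$ shows that $T$ and $U\bdot T^{[-1]}$ are simultaneously product-one exactly when $e(T)\equiv 0\pmod n$ and $e(T_O)\equiv 0\pmod p$. (The case $p=2$ is covered by Lemma~\ref{lemma-|G'|=2}, so I assume $p$ is odd here.) The task is thus to produce such a nontrivial, proper $T$.

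To build it, I plan to combine the $p-1$ designated special terms $R=\alpha^{i_1}\bdot\ldots\bdot\alpha^{i_{p-1}}$ (each $i_j\not\equiv 0\pmod p$) with the central $\la\alpha\ra$-terms of $U$, whose $\alpha$-exponents are divisible by $p$. The central terms are inert modulo $p$, so including them in $T$ does not disturb $e(T_E)\bmod p$ or $e(T_O)\bmod p$; but by the Davenport constant $\mathsf d(\la\alpha^p\ra)=n^--1$ of the cyclic center, sufficiently many central terms admit subsets hitting any target residue modulo $n^-$, which is how I plan to drive $e(T)$ to $0\pmod n$ once its mod-$p$ residue has been set to zero. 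The length hypothesis $|U|\geq n+p+1$ ensures, after subtracting the $\tau$-terms and the $p-1$ special terms, that enough central terms remain for this adjustment; in the extreme regime when almost all of $U$ lies in $\la\alpha\ra$, the $\la\alpha\ra$-part already exceeds $n$ and one instead applies Lemma~\ref{Big-Dav-lemma} directly inside $\la\alpha\ra$.

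The main obstacle is the degenerate regime in which the images of the $p-1$ terms of $R$ in $\la\alpha\ra/\la\alpha^p\ra\cong C_p$ all coincide; by the classical fact that a zero-sum-free sequence of length $p-1$ in $C_p$ must be constant, in that case no nontrivial subset of $R$ alone has exponent sum divisible by $p$. I expect to handle this either by choosing an alternative product-one ordering $U^*$ (shifting via Lemma~\ref{permutaton-lemma} and permuting the $\tau$-terms to move special terms between even- and odd-indexed blocks), or by broadening the candidate $T$'s to contain $\tau$-terms, at the cost of a more intricate absorption formula than the one above. The slack $|U|-(n+p)\geq 1$ in the length bound is what will be leveraged to secure a valid choice.
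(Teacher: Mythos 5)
Your proposal is a plan with two unresolved holes, and the second one is fatal to the strategy as stated. First, the degenerate regime you flag yourself -- all $p-1$ special terms congruent to the same nonzero residue mod $p$ (e.g.\ $V=\alpha^{[p-1]}$) -- is not a rare corner case, and neither of your suggested fixes (re-ordering via Lemma \ref{permutaton-lemma}, or admitting $\tau$-terms into $T$) is carried out; this is exactly where the work lies. Second, your mod-$n^-$ adjustment relies on ``sufficiently many central terms'' surviving after removing the $\tau$-terms and the $p-1$ special terms, but the hypotheses give no lower bound whatsoever on the number of central terms: $U$ may consist of the $p-1$ non-central $\alpha$-terms and otherwise entirely of terms from $\tau\la\alpha\ra$, and then there is nothing with which to drive $e(T)$ to $0\bmod n$. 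More generally, restricting the candidate $T$ to $\la\alpha\ra$-terms is a genuine handicap, since $U$ may have only $p-1$ terms in $\la\alpha\ra$ altogether.

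The idea you are missing is to stop trying to engineer a subsequence whose exponent sums vanish exactly, and instead make the \emph{complement} flexible. The paper takes the $p-1$ non-central terms $v_i=\alpha^{x_i}$ together with a \emph{single} $\tau$-term $z=\tau\alpha^x$ (which exists unless $\supp(U)\subset\la\alpha\ra$, a case both you and the paper dispose of directly). Moving each $v_i$ to the left or right of $z$ multiplies its contribution by $r$ or not, so the exponents realized by $\pi(V\bdot z)$ form $x+\sum_i x_i+\{0,(r-1)x_1\}+\ldots+\{0,(r-1)x_{p-1}\}$; since each $x_i\not\equiv 0\bmod p$ and $\alpha^{r-1}$ generates $G'$ of prime order $p$, the Cauchy--Davenport Theorem gives $|\pi(V\bdot z)|=p$, i.e.\ $\pi(V\bdot z)$ is an entire $G'$-coset -- note the summands are $\{0,x_i\}$, so the configuration that kills your plan (all $x_i$ equal mod $p$) causes no trouble here. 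Then, since $|U\bdot(V\bdot z)^{[-1]}|\ge |U|-p\ge n+1=\dd(G)+1$ by Olson--White (Theorem \ref{thm-olsonwhite-upper}), there is a nontrivial product-one subsequence $T$ of the remaining terms, with no constraint on where its terms lie; Lemma \ref{lemma-G'} gives $\pi(U\bdot T^{[-1]})\subset G'$, and since $V\bdot z\mid U\bdot T^{[-1]}$ forces $\pi(U\bdot T^{[-1]})\supset$ a full $G'$-coset inside $G'$, we get $1\in\pi(U\bdot T^{[-1]})$ and hence a factorization of $U$. Your block-decomposition product formula is essentially the same computation as the paper's \eqref{relationtime}/\eqref{Rpi}, but without the coset-absorption step it does not close.
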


\begin{proof}
Since $n^+=p$ is prime, we have $n^+=p\geq 2$. Thus Lemma \ref{lemma-techI} implies that $G$ is non-abelian with  $\mathsf Z(G)=\la \alpha^{n^+}\ra=\la \alpha^p\ra$ and $G'=\la \alpha^{n^-}\ra=\la \alpha^{r-1}\ra$. In particular, $|G'|=n^+=p\geq 2$ and $|\mathsf Z(G)|=n^-$.

By hypothesis, there is a subsequence $V\mid U$ with $\supp(V)\subset \la \alpha\ra\setminus \mathsf Z(G)$ and $|V|= p-1$, say $V= v_1\bdot\ldots\bdot  v_{p-1}$ with $$v_i=\alpha^{x_i}\quad\mbox{ for } \quad i\in [1,p-1],$$  where $x_i\in [0,n-1]$. Since $v_i\notin \mathsf Z(G)=\la \alpha^p\ra$ for all $i\in [1,p-1]$,  we see that \be\label{not0pII}x_i\not\equiv 0\mod p\quad\mbox{ for all $i\in [1,p-1]$}.\ee

If $\supp(U)\subset \la \alpha\ra$, then $|U|\geq n+p+1>n=|\la \alpha\ra|\geq \mathsf D(\la \alpha\ra)$ ensures that $U$ cannot be an atom, as desired, where the final inequality follows from Lemma \ref{dav-up-lemma}. Therefore we can assume
 there is some  $z= \tau\alpha^x\in \supp(U)$ with $x\in [0,n-1]$.

As remarked in Section
\ref{sec-notation}, $\pi(V\bdot z)$ is contained in a $G'$-coset. Let us next show that \be\label{combigI}|\pi(V\bdot z)|=p=|G'|,\ee so that $\pi(V\bdot z)$ is an entire $G'$-coset.

Let ${W^*}$ be an ordering of the terms of $V\bdot z$, so $W^*\in \Fc^*(G)$ with $[W^*]=V\bdot z$. Then \be\label{exponentsI}\pi(W^*)=\tau\alpha^{x+\Sum{i=1}{p-1}\epsilon_ix_i},\ee where $\epsilon_i=1$ if the term $x_i$ occurs to the right of $z=\tau\alpha^x$ in $W^*$, and $\epsilon_i =r$ if the term $x_i$ occurs to the left of $z=\tau\alpha^x$ in $W^*$. The possible exponents for $\alpha$ in \eqref{exponentsI} (as we range over all possible orderings $W^*$ of $V\bdot z$) are then $$x+\{x_1,rx_1\}+\ldots+\{x_{p-1},rx_{p-1}\}=x+
\Sum{i=1}{p-1}x_i+\{0,(r-1)x_1\}+\ldots+\{0,(r-1)x_{p-1}\}.$$
Consequently, \be\label{siidI}\pi(V\bdot z)=\tau\alpha^{x+\Sum{i=1}{p-1}x_i}\{(\alpha^{r-1})^y:\; y\in Y\},\ee where $Y=\{0,x_1\}+\ldots+\{0,x_{p-1}\}$. Recall that $\alpha^{r-1}$ is a generator for $G'$ having $\ord(\alpha^{r-1})=n^+=p$. Thus the cardinality of $\pi(V\bdot z)$ is just the number of residue classes modulo $p$ in $Y=\{0,x_1\}+\ldots+\{0,x_{p-1}\}$.
From \eqref{not0pII}, we see that each set $\{0,x_i\}$ consists of $2$ elements that are distinct modulo $p$, in which case applying the Cauchy-Davenport Theorem to $Y$ shows that $|Y|=p=|G'|$, which combined with \eqref{siidI} establishes \eqref{combigI}, as claimed.

Now $|U\bdot (V\bdot z)^{-1}|=|U|-p\geq n+1=\dd(G)+1$, with the first inequality by hypothesis and the final equality from Theorem \ref{thm-olsonwhite-upper}.
Thus we can apply the definition of $\mathsf d(G)$ to $U\bdot (V\bdot z)^{-1}$ to find a nontrivial, product-one subsequence $T\mid U\bdot (V\bdot z)^{-1}$.
But now Lemma \ref{lemma-G'} shows that  $\pi(U\bdot T^{[-1]})\subset G'$.
As a result, since $V\bdot z\mid U\bdot T^{[-1]}$ follows from the definition of $T$, it follows in view of \eqref{combigI} that $\pi(U\bdot T^{[-1]})=G'$. In particular, $1\in G'= \pi(U\bdot T^{[-1]})$. Thus $U=(U\bdot T^{[-1]})\bdot T$ is a factorization of $U$ into two nontrivial, product-one subsequences, ensuring that $U$ is not an atom, as desired. \end{proof}

When either $n^+$ or $n^-$ is too small, the general strategy for proving Theorem \ref{thm-index2} breaks down, requiring the cases when $n^-\leq 2$ or $n^+\leq 2$ to be handled separately. Most of these remaining cases can be handled by simple arguments. However, the case when $G$ is isomorphic to a dicyclic group $Q_{4p}$ with $p$ odd is particularly difficult, so we handle it separately now.

\begin{lemma}\label{lemma-dicyclic}
Let $G$ be a dicyclic group of order $4p$ with $p$ an odd prime, say
$$G = Q_{4p}=\la \alpha,\,\tau\mid \alpha^{2p}=1,\quad \tau^2=\alpha^{p},\quad \alpha\tau=\tau\alpha^{-1}\ra.$$ Then $\mathsf D(G)\leq \frac12|G|+|G'|=3p$, where $G'=[G,G]\leq G$ is the commutator subgroup.
\end{lemma}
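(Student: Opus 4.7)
The plan is to proceed by contradiction: supposing some atom $U\in\mathcal A(G)$ has length $|U|\geq 3p+1$, I will construct a factorization of $U$ into two nontrivial product-one subsequences. First, by Lemma~\ref{lemma-techI} applied with $n^+=p$ and $n^-=2$, we have $|G'|=p$ and $\mathsf Z(G)=\la\alpha^p\ra=\{1,\alpha^p\}$. The argument of Lemma~\ref{dav-up-lemma} rules out $1\in\supp(U)$ (it would split off as a trivial atom) and rules out $\vp_{\alpha^p}(U)\geq 2$ (since $\alpha^p$ is central, the product-one subsequence $\alpha^p\bdot\alpha^p$ can be pulled off). Since $|U|\geq n+p+1=3p+1$, Lemma~\ref{lemma-lotsofalplha} then forces at most $p-2$ terms of $U$ to lie in $\la\alpha\ra\setminus\{1,\alpha^p\}$. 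Counting, the ``bad'' subsequence $W\mid U$ consisting of the terms in $\tau\la\alpha\ra$ has even length $|W|=2k\geq 2p+2$.

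The next step is to translate the problem into the abelian setting of Lemma~\ref{lem-factorizationI}. The dicyclic relation $\alpha\tau=\tau\alpha^{-1}$ yields the identity $(\tau\alpha^{c_1})\cdots(\tau\alpha^{c_{2k}})=\alpha^{kp+\sigma(T_2)-\sigma(T_1)}$, where $T_1,T_2$ gather the bad exponents at odd and even positions of the chosen ordering. Letting $S=a_1\bdot\ldots\bdot a_{2k}\in\Fc(C_{2p})$ be the bad-exponent sequence and $x=p$ denote the unique order-$2$ element of $C_{2p}$, the bad part $W$ is itself product-one in $G$ exactly when $S$ admits a balanced factorization $S=T_1\bdot T_2$ with $|T_1|=|T_2|=k$ and $\sigma(T_1)-\sigma(T_2)=|T_1|x$---the hypothesis of Lemma~\ref{lem-factorizationI}. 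A more elaborate calculation, in which the good part $V:=U\bdot W^{[-1]}$ is interleaved among the bad terms, shows that a generic ordering of $U$ produces the product $\alpha^{kp+\sigma(V)-2s_{\mathrm{odd}}+\sigma(T_2)-\sigma(T_1)}$ in $\la\alpha\ra$, where $s_{\mathrm{odd}}$ may be taken to be any subsequence sum of $V$'s exponents. Since $U$ is product-one, some data $(T_1,T_2,s_{\mathrm{odd}})$ makes this exponent $\equiv 0\pmod{2p}$.

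Assuming $|W|\geq 2p+4$, the strategy is to exploit the freedom in the choice of $s_{\mathrm{odd}}$, together with centrality of the (at most single) $\alpha^p$ to adjust parity, in order to realize a balanced partition of $S$ with $\sigma(T_1)-\sigma(T_2)=kp$. Lemma~\ref{lem-factorizationI} then refines $S$ as $S=U_1\bdot U_2\bdot V_1\bdot V_2$ with both pairs balanced, which lifts to a splitting $W=W_A\bdot W_B$ into two nontrivial bad-only product-one subsequences of $G$ by the computation of the preceding paragraph. Absorbing $V$ appropriately into one side---via a Cauchy--Davenport-style subsequence-sum argument in $C_{2p}$ to realize $\sigma(V)$ as twice an attainable subset sum---then produces the desired factorization $U=U^{(1)}\bdot U^{(2)}$ into two nontrivial product-one subsequences, contradicting atomicity.

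The rigid edge case $|W|=2p+2$, just below the threshold of Lemma~\ref{lem-factorizationI}, must be handled separately. Here $U$ has exactly $p-2$ non-central good terms and exactly one $\alpha^p$. If two bad terms $\tau\alpha^{a_i},\tau\alpha^{a_j}$ with $a_i\not\equiv a_j\pmod p$ exist, a Cauchy--Davenport argument paralleling Lemma~\ref{lemma-lotsofalplha} (using two bad terms in place of one) shows that $\pi(V'\bdot\tau\alpha^{a_i}\bdot\tau\alpha^{a_j})$ equals an entire $G'$-coset, after which Lemma~\ref{lemma-G'} combined with $\dd(G)=n$ delivers the factoring subsequence just as in the proof of Lemma~\ref{lemma-lotsofalplha}. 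Otherwise all bad terms lie in a single subgroup $\la\tau\alpha^c\ra\cong C_4$ with $\alpha^p=(\tau\alpha^c)^2$, and the resulting constrained structure is factored by direct analysis in $C_4$. The main obstacle I foresee is precisely this lifting step---bridging the non-abelian combinatorics of $G$ with the abelian conclusion of Lemma~\ref{lem-factorizationI} so as to deliver a genuine factorization of $U$---together with the delicate bookkeeping in the edge case $|W|=2p+2$.
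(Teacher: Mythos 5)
There is a genuine gap, and it sits exactly where you flag it: the bridge from the mixed sequence $U$ to Lemma~\ref{lem-factorizationI} is never built. Lemma~\ref{lem-factorizationI} needs a sequence over $C_{2p}$ of even length at least $2p+4$ which \emph{already} admits a balanced factorization $T_1\bdot T_2$ with $\sigma(T_1)-\sigma(T_2)=|T_1|x$; in your setup that hypothesis says the bad part $W$ alone is product-one in $G$. But product-one-ness of $U$ only gives $kp+\sigma(T_2)-\sigma(T_1)+(\text{contribution of }V)\equiv 0\pmod{2p}$, so $W$ by itself need not satisfy the hypothesis, and your plan to force it (``exploit the freedom in $s_{\mathrm{odd}}$'') is not an argument: the attainable corrections range only over certain subsequence sums of $V$'s exponents, and nothing shows $0$ is attainable. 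Even granting a split of $W$ into two product-one pieces, redistributing $V$ so that \emph{both} halves of $U$ remain product-one is a second unproven step; ``absorbing $V$ into one side'' changes that side's product. The paper's proof resolves all of this with one device you are missing: it contracts a product-one ordering $U^*$ into consecutive blocks, each consisting of a $\tau\la\alpha\ra$-term followed by the $\la\alpha\ra$-terms after it, and replaces each block by its product. This yields a new \emph{atom} $U'$ with $\supp(U')\subset\tau\la\alpha\ra$ and $|U'|=|W|$ (atomicity is preserved under this contraction, as recorded in Section~\ref{sec-notation}); then Claim~A of the paper translates ``$U'$ product-one'' into exactly the hypothesis of Lemma~\ref{lem-factorizationI}, whose conclusion splits $U'$ and gives the contradiction. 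The $\la\alpha\ra$-terms are thereby absorbed before the abelian lemma is invoked, so no interleaving or redistribution argument is needed.

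Your troublesome edge case $|W|=2p+2$ is also an artifact of a weaker preliminary reduction. You only exclude $\vp_{\alpha^p}(U)\ge 2$, leaving possibly one central term; the paper excludes \emph{any} occurrence of $\alpha^p$: cyclically shift so $U^*(1)=\alpha^p$, apply Lemma~\ref{dav-up-lemma} to $\phi_{\mathsf Z(G)}\big(U^*(2,|U|-1)\big)$ (length $\ge 2p=|G/\mathsf Z(G)|$) to get a consecutive subsequence with product in $\mathsf Z(G)$, note by Lemma~\ref{lemma-no-consecutive-prod-1} that this product must be $\alpha^p$, and pair it with the first term to split $U$. With central terms gone, $|W|\ge |U|-(p-2)\ge 2p+3$, hence $|W|\ge 2p+4$ by parity, so the threshold of Lemma~\ref{lem-factorizationI} is always met and no separate case is needed. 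Your sketch for the edge case (two bad exponents differing mod $p$, else all bad terms in a $C_4$ ``factored by direct analysis'') is not carried out, and in the second branch the good terms do not lie in that $C_4$, so a purely internal $C_4$ analysis would not suffice as stated. As written, the proposal identifies the right abelian lemma but leaves the two steps that constitute the actual proof --- the exclusion of central terms and the block-contraction to a $\tau\la\alpha\ra$-supported atom --- unproved.
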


\begin{proof}
By hypothesis,   $G$ satisfies the Standard Assumptions of Section \ref{sec-index2} having types (B) and (C) (since these types coincide for $s=1$) with $$s=1, \quad P\leq G\;\mbox{ cyclic},\quad n=2p, \quad r=2p-1, \quad n^-=2, \quad m^-=1,\quad\und\quad n^+=m^+=p.$$ As a result, Lemma \ref{lemma-techI} tells us that \be\label{com-center}G'=\la \alpha^2\ra\cong C_p\quad\und\quad \mathsf Z(G)=\la \alpha^p\ra\cong C_2.\ee

Assume by contradiction that we have some atom $U\in \mathcal A(G)$ with $|U|=\mathsf D(G)\geq 3p+1$. Since $U$ is a product-one sequence, there is an ordering of its terms with product $1$, say $U^*\in \Fc^*(G)$ with $[U^*]=U$ and $\pi(U^*)=1$.

\medskip

Suppose $\mathsf Z(G)\cap \supp(U)\neq \emptyset$. Since $U$ is an atom and $G$ is nontrivial, we cannot have $1\in \supp(U)$. Thus, in view of \eqref{com-center} and $\mathsf Z(G)\cap \supp(U)\neq \emptyset$, we must have $\alpha^p\in \supp(U)$. By Lemma \ref{permutaton-lemma}, we can w.l.o.g. assume $\alpha^p$ is equal to the first term of $U^*$, so $U^*(1)=\alpha^p$. But then $|U^*(2,|U|-1)|=|U|-2\geq 3p-1\geq 2p=|G/\mathsf Z(G)|$, which means we can apply Lemma \ref{dav-up-lemma} to $\phi_{\mathsf Z(G)}\Big(U^*(2,|U|-1)\Big)$ and thereby find a nontrivial, consecutive subsequence of $U^*(2,|U|-1)$ with product from $\mathsf Z(G)=\{1,\alpha^p\}$, say $U^*(I)$ with $I\subset [2,|U|-1]$ an interval. Since $U=[U^*]\in \mathcal A(G)$ is an atom, Lemma \ref{lemma-no-consecutive-prod-1} ensures that $\pi\Big(U^*(I)\Big)\neq 1$.
Thus $\pi\Big(U^*(I)\Big)=\alpha^p\in \mathsf Z(G)$ with $U^*(I)\mid U^*(2,|U|-1)$ consecutive, in which case  $\pi\Big(U^*(1)\bdot U^*(I)\bdot U^*([2,|U|]\setminus I)\Big)=\pi(U^*)=1$.  However, $\pi\Big(U^*(1)\bdot U^*(I)\Big)=\alpha^p\alpha^p=\alpha^{2p}=1$, so that $U=[U^*]=[U^*(1)\bdot U^*(I)]\bdot [U^*([2,|U|]\setminus I)]$ is a factorization of $U$ into $2$ nontrivial, product-one subsequences---the subsequence $[U^*([2,|U|]\setminus I)]$ is nontrivial since $I\subset [2,|U|-1]$---contradicting that $U\in \mathcal A(G)$ is an atom in this case as well. So we instead conclude that \be \mathsf Z(G)\cap \supp(U)= \emptyset\label{nocenterterms}.\ee
In view of Lemma \ref{lemma-lotsofalplha} and \eqref{nocenterterms}, we may assume \be\label{wahaa}\mbox{there are at most $p-2$ terms of $U$ from $\la \alpha\ra$}.\ee

\bigskip

Let $J\subset [1,|U|]$ be all those indices $j\in [1,|U|]$ with $U^*(j)\in \tau\la \alpha\ra$. Since $\pi(U^*)=1$, it is easily deduced from the group presentation for $G$ that $|J|$ must be even. In view of \eqref{wahaa}, we have $|J|\geq |U|-p+2\geq 2p+3$. Thus, since $|J|$ must be even, it follows that \be\label{Jbigboy} |J|\geq 2p+4.\ee Let $$j_1<j_2<\ldots <j_{2w-1}<j_{2w}$$ be the distinct elements of $J$, where $$w=\frac12 |J|\geq p+2.$$ In view of Lemma \ref{permutaton-lemma}, we can cyclically shift the ordering $U^*$ of $U$ until the first term of $U^*$ is from $\tau\la \alpha\ra$, i.e., such that $j_1=1$.

Now define an ordered sequence
\[
{U'}^*= U^*(j_1,j_2-1)\bdot U^*(j_2,j_3-1) \bdot\ldots\bdot  U^*(j_{2w-1},j_{2w}-1)\bdot U^*(j_{2w},|U|)\in \Fc^*(G) \,.
\]
The ordered sequence ${U'}^*$ is obtained from the  product-one ordered sequence $U^*$ by repeatedly replacing a consecutive subsequence with a single term equal to its product. As noted in Section \ref{sec-notation}, since $[U^*]=U\in \mathcal A(G)$ was an atom, this ensures that $$U':=[{U'}^*]\in \mathcal A(G)$$ is also an atom.
From the definition of the $j_i$, each $U^*(j_i,j_{i+1}-1)$, for $i\in [1,2w]$ where $j_{2w+1}=|U|+1$, has its first term from $\tau\la \alpha\ra$ and all other terms from $\la \alpha\ra$.
In consequence, we have $$\supp(U')\subset \tau\la \alpha\ra\quad\und\quad |U'|=|J|=2w\geq 2p+4,$$ where the inequality follows from \eqref{Jbigboy}.

Define a map $\overline{\bdot}:\tau\la \alpha\ra\rightarrow \Z/2p\Z$ by setting $\overline{\tau\alpha^x}:=\phi_{2p\Z}(x)\in \Z/2p\Z$, i.e., $\tau\alpha^x$ maps to the residue class represented by $x$ modulo $p$. Since $\ord(\alpha)=2p$, the map $\overline{\bdot}$ is well-defined.
We continue with a straightforward claim.

\subsection*{Claim A} Let  $R\in \Fc(G)$ with $\supp(R)\subset \tau\la \alpha\ra$. Then $R$ is a product-one sequence precisely when there exists a factorization $R=R^+\bdot R^-$ such that $|\overline{R^+}|=|\overline{R^-}|$ and $\sigma(\overline {R^-})-\sigma(\overline{R^+})=\frac12|R|p$.

\begin{proof}
Suppose $R$ is a product-one sequence. Then there exists an ordering of $R$, say $R^*\in \Fc^*(G)$ with $[R^*]=R$, such that $\pi(R^*)=1$.  Since $\pi(R^*)=1$ and $\supp(R)\subset \tau\la \alpha\ra$, it is easily deduced from the group presentation for $G$ that $|R|$ must be even. Thus let $$R^*=( \tau\alpha^{r^-_1}) \bdot ( \tau\alpha^{r^+_1})\bdot\ldots\bdot ( \tau\alpha^{r^-_w}) \bdot (\tau\alpha^{r^+_w}) \in \Fc^*(G),$$ where $r^-_i,\,r^+_i\in [0,2p-1]$ and $w=\frac12|R|$. Repeatedly applying the group presentation relations for $G$ yields
\be\label{nolabelsmiley}1=\pi(R^*)=(\tau\alpha^{r^-_1}\tau\alpha^{r^+_1})(\tau\alpha^{r^-_2}
\tau\alpha^{r^+_2})\ldots (\tau\alpha^{r^-_w}\tau\alpha^{r^+_w})=\alpha^{wp+\Sum{i=1}{w}r_i^+-\Sum{i=1}{w}r_i^-},\ee
thus implying \be\label{starkk}wp+\Sum{i=1}{w}r_i^+-\Sum{i=1}{w}r_i^-\equiv 0\mod 2p.\ee
Let $$R^-=[R^*(I^-)]= (\tau\alpha^{r^-_1}) \bdot\ldots\bdot ( \tau\alpha^{r^-_w})\quad\und\quad R^+=[R^*(I^+)]= (\tau\alpha^{r^+_1})\bdot\ldots\bdot ( \tau\alpha^{r^+_w}),$$ where $I^-=\{1,3,\ldots,2w-1\}$ and $I^+=\{2,4,\ldots,2w\}$. Since $I^-\cup I^+=[1,2w]=[1,|R|]$ with the union disjoint, we see that $R=R^+\bdot R^-$ with $|\overline{R^-}|=|R^-|=|R^+|=|\overline{R^+}|=w=\frac12|R|$. Moreover, \eqref{starkk} is equivalent to saying $\sigma(\overline{R^-})-\sigma(\overline{R^+})=wp=\frac12|R|p$. Thus one direction of the claim in established.

Now suppose that we have a  factorization $R=R^+\bdot R^-$ such that $|\overline{R^+}|=|\overline{R^-}|$ and $\sigma(\overline {R^-})-\sigma(\overline{R^+})=\frac12|R|p$. Let $R^*\in \Fc^*(G)$ be an ordering of $R$ such that $[R^*(I^-)]=R^-$ and $[R^*(I^+)]=R^+$, where $I^-\subset [1,|R|]$ is the subset of odd indices and $I^+\subset [1,|R|]$ is the subset of even indices. Since $R=R^+\bdot R^-$ with $|R^+|=|\overline{R^+}|=|\overline{R^-}|=|R^-|$, it follows that $|R|$ is even, so that $|I^+|=|I^-|=\frac12|R|$.
Let $w=\frac12|R|$ and let $$R^*(2i-1)=\tau\alpha^{r^-_i}\quad\und\quad R^*(2i)=\tau\alpha^{r^+_i}\quad\mbox{ for $i\in [1,\frac12|R|]=[1,w]$}.$$ Then, in view of $\sigma(\overline {R^-})-\sigma(\overline{R^+})=\frac12|R|p=wp$, we see that \eqref{starkk} holds, and consequently also \eqref{nolabelsmiley}. Thus $1=\pi(R^*)\in \pi([R^*])=\pi(R)$, showing that $R$ is a product-one sequence, which completes the claim.
\end{proof}

Using Claim A, we see that Lemma \ref{lem-factorizationI} is equivalent to saying that the maximal length of an atom $V\in \mathcal A(G)$ with $\supp(V)\subset \tau\la \alpha\ra$ is $|V|\leq 2p+3$. However, this contradicts that  we constructed above an atom $U'\in \mathcal A(G)$ with  $\supp(U')\subset \tau\la \alpha\ra$ and $|U'|\geq 2p+4$, completing the proof.
\end{proof}

With the above preparatory work complete, we are now ready to begin the proof of Theorem \ref{thm-index2}

\smallskip
\begin{proof}[\bf Proof of Theorem \ref{thm-index2}]
If $G$ is cyclic, then $\dd(G)=|G|-1$, while $\dd(G)=\frac12|G|$ follows for non-cyclic $G$ having a cyclic, index $2$ subgroup, and
if $G$ is abelian, then $\mathsf D(G)=\dd(G)+1$  (by Lemma \ref{dav-up-lemma} and  Theorem \ref{thm-olsonwhite-upper}).

Therefore we may assume $G$ is non-abelian and satisfies the General Assumptions for Section \ref{sec-index2}. Lemma \ref{index-2-lower} gives $\dd(G)+|G'|=\frac12|G|+|G'|\leq \mathsf D(G)$.
Since $G$ is non-abelian, Lemma \ref{lemma-techI} gives $|G'|=n^+\geq 2$, and it remains to show the upper bound \be\label{targett}\mathsf D(G)\leq \frac12|G|+|G'|=n+n^+.\ee

By Lemma \ref{lemma-reduce-to-prime}, it suffices to prove \eqref{targett} when $|G'|=n^+=p$ is prime. Furthermore, if $n^+=2$, then Lemma \ref{lemma-|G'|=2} yields \eqref{targett}. Consequently, we can assume \be\label{n^+assump}|G'|=n^+=p\geq 3\quad\mbox{ is prime}.\ee
In particular, only the cases where $n^+=p$ is odd remain, which in view of \eqref{tabel-n} means that $\rho(P)=1$. From the definition  of $\rho$, we see that $\rho(P)=1$ corresponds to when $P\cong C_{2^{s+1}}$ or $P\cong C_2\times C_{2^s}$. However, if $P\cong C_2\times C_{2^s}$ is non-cyclic, then Lemma \ref{lemma-techII} shows that $\mathsf C_G(\tau)\cong C_2\times C_{n^-}$ is non-cyclic. Since $\mathsf D(C_2\times C_{n^-})=n^-+1$ is well-known (\cite[Theorem 5.8.3]{Ge-HK06a}), invoking Theorem \ref{thm-induc-bound} would then yield $$\mathsf D(G)\leq \mathsf D(\mathsf C_G(\tau))|G:\mathsf C_G(\tau)|=\mathsf D(C_2\times C_{n^-})\,n^+=(n^-+1)n^+=n+n^+,$$ yielding \eqref{targett}. So it remains to prove \eqref{targett} when $$P\cong C_{2^{s+1}}\quad\mbox{ is cyclic with $\rho(P)=1$}.$$ In particular, Theorem \ref{thm-full-index2-class} now tells us that $G$ has type (C).

\medskip

If $n^-=1$, then \eqref{tabel-n} and the definition of $n^-$ and $m^-$ ensure that $s=0$, \ $r=n-1$ and $p=n^+=n$. This corresponds to the case when  $G$ is dihedral of order $2n$ with $n$ odd. In this case, Lemma \ref{dav-up-lemma} implies $\mathsf D(G)\leq |G|=2n=n+n^+$, yielding \eqref{targett}. Therefore we may assume $n^-\geq 2$.

Suppose $n^-=2$. Then it follows in view of  $\rho(P)=1$ and \eqref{tabel-n} that $$s=1, \quad m^-=1, \quad n^+=m^+=m=p\quad\und\quad n=2m=2p.$$
Since $P$ is cyclic with $s=1$, Theorem \ref{thm-full-index2-class} ensures that $G$ has types (C) and (B) (these types coincide for $s=1$) with $$(r-1)(r+1)=r^2-1\equiv 0\mod m\quad\und\quad r\equiv 1\mod 2.$$ In consequence, since $1=m^-=\gcd(r-1,m)$ and $r\in [1,n]$, it follows that $r=n-1=2p-1$. As a result, we see that $G\cong Q_{4p}$ is dicyclic, in which case Lemma \ref{lemma-dicyclic} yields \eqref{targett}. So we may assume \be\label{n^-assump} n^-\geq 3.\ee

\medskip

To establish \eqref{targett}, assume by contradiction that we have an atom $U\in \mathcal A(G)$ with \be\label{wickergoal}|U|=\mathsf D(G)\geq n+n^++1=n^+n^-+n^++1.\ee Factor $U=U_\alpha \bdot U_\tau$ with $\supp(U_\alpha)\subset \la \alpha\ra$ and $\supp(U_\tau)\subset \tau\la \alpha\ra$.  In view of Lemma \ref{lemma-techI}, we know $$\mathsf Z(G)=\la \alpha^{n^+}\ra=\la \alpha^p\ra\quad\und\quad G'=\la \alpha^{n^-}\ra=\la \alpha^{r-1}\ra\quad\mbox{ with }\quad |G'|=n^+=p.$$ Let $U'_\alpha\mid U_\alpha$ be the subsequence consisting of all terms from $\la \alpha\ra\setminus \mathsf Z(G)$. Then, since $\mathsf Z(G)=\la \alpha^{p}\ra$, we see that $U_\alpha\bdot {U'_\alpha}^{[-1]}$ is the subsequence of $U$ consisting of all terms from $\mathsf Z(G)$.

\medskip

Let us next show that \be\label{Vsmall}|U'_\alpha|\leq n^+-2,\quad |U_\alpha \bdot {U'_\alpha}^{[-1]}|\leq n^--1\quad\und\quad |U_\alpha|\leq n^++n^--3.\ee
In view of Lemma \ref{lemma-lotsofalplha}, we have $|U'_\alpha|\leq n^+-2$. Thus, if \eqref{Vsmall} fails, then we must have
$|U_\alpha\bdot {U'_\alpha}^{[-1]}|=|U_\alpha|-|U'_\alpha|\geq n^-$. In other words, there are at least $n^-=|\mathsf Z(G)|$ terms of $U$ from $\mathsf Z(G)$.
Since $U\in \mathcal A(G)$ is an atom, let $U^*\in \Fc^*(G)$ with $[U^*]=U$ be an ordering of $U$ such that $\pi(U^*)=1$. Any term from $\mathsf Z(G)$ can be moved around in the ordered sequence $U^*$ without changing the value of $\pi(U^*)$. Thus we can w.l.o.g. assume all terms from $U_\alpha \bdot {U'_\alpha}^{[-1]}$ are consecutive in $U^*$.
In consequence, since $|U_\alpha\bdot {U'_\alpha}^{[-1]}|\geq |\mathsf Z(G)|=n^-$,  we can apply Lemma \ref{dav-up-lemma} to $U_\alpha \bdot {U'_\alpha}^{[-1]}$ to find a nontrivial, consecutive, product-one subsequence $U^*(I)$, where $I\subset [1,|U|]$ is an interval.  Moreover, $|U^*(I)|\leq |\mathsf Z(G)|=n^-<|U|$, meaning $U^*(I)\mid U$ is proper. But since $U=[U^*]\in \mathcal A(G)$ is an atom, this contradicts Lemma \ref{lemma-no-consecutive-prod-1}. So \eqref{Vsmall} is established, as claimed.

\bigskip

Define a map $\iota:G\rightarrow \Z$ by setting $$\iota(\tau^y\alpha^x)=x,\quad\mbox{ where $x\in [0,n-1]$ and $y\in [0,1]$},$$ and define a map $\overline{\bdot}:G\rightarrow \Z/p\Z$ by setting $$\overline{g}=\phi_{p\Z}(\iota(g))\quad\mbox{ for $g\in G$},$$ so $g=\tau^y\alpha^x\in G$ maps to the residue class modulo $p$ given by $\iota(g)=x$.

\medskip

Let $R\in \Fc(G)$ be a sequence and let  $R^*\in \Fc^*(G)$ be an arbitrary ordering of $R$, so $[R^*]=R$.  Factor $R=R_\alpha \cdot R_\tau$ with $\supp(R_\alpha)\subset \la \alpha\ra$ and $\supp(R_\tau)\subset \tau \la \alpha\ra$.
We proceed to describe $\pi(R)$ under the assumption that $$|R_\tau|\geq 1.$$
  First note that, from the defining relations for $G$, it is clear that $\pi(R^*)\in \la \alpha\ra$ if and only if the number of terms of $R$ from $\tau\la \alpha\ra$ is even, that is, if $|R_\tau|$ is even. Let $\omega=\lfloor \frac12 |R_\tau|\rfloor$, so that $|R_\tau|=2\omega$ when $\pi(R^*)\in \la \alpha\ra$ and $|R_\tau|=2\omega+1$ when $\pi(R^*)\in \tau\la \alpha\ra$.

Next, since $G$ has type (C), a routine application of the defining relations for $G$ shows that  \be\label{relationtime} \pi(R^*)=\tau^\epsilon\alpha^{\omega m+\Sum{i=1}{|R|}d_i\iota(R^*(i))},\ee
where
$\epsilon=1$ if $|R_\tau|$ is odd, $\epsilon =0$ if $|R_\tau|$ is even,
$d_i=1$ if the number of terms of $R^*$ from $\tau\la \alpha\ra$ to the right of $R^*(i)$ is even, and $d_i=r$ if the number of terms of $R^*$ from $\tau\la \alpha\ra$ to the right of $R^*(i)$ is odd.

There are some important consequences of the formula \eqref{relationtime}.  Let $I\subset [1,|R|]$ be the set of indices such that $[R^*(I)]=R_\tau$.
If we fix the position of every term  $R^*(i)\in\la \alpha\ra$ with $i\notin I$ but allow ourselves to permute the terms within $R^*(I)$, this maintains that $[R^*(I)]=R_\tau$ while  each coefficient $d_i$, for $i\in [1,|R|]\setminus I$, remains unaffected and constant. In consequence, when trying to determine the possible values for \eqref{relationtime} over all orderings $R^*$, we can first decide how to distribute the terms from $R_\alpha$ into $R^*$, thus fixing and determining the subset of indices $I\subset [1,|R|]$ with $[R^*(I)]=R_\tau$,  and then decide how to permute the terms within $R^*(I)$.
 Since $|R_\tau|\geq 1$,
every term of $R_\alpha$ can either be placed in $R^*$ such that the number of terms of $R^*$ from $\tau\la \alpha\ra$ to its right is even, or such that this number is odd. Changing this choice has the effect on \eqref{relationtime} of switching $d_i$ between $1$ and $r$. Once we have fixed how the terms of $R$ from $\la \alpha\ra$ are to be distributed in $R^*$, the set $I\subset [1,|R|]$ is then fixed, but we are free to re-order the terms from $R_\tau$ so long as we preserve $[R^*(I)]=R_\tau$ and this will not affect whether $d_i=1$ or $d_i=r$ holds for any $i\in [1,|R|]\setminus I$.

Concerning the terms of $R^*$ from $R_\tau$, whether $d_i=1$ or $d_i=r$ holds for $i\in I$ depends entirely on whether $R^*(i)=(R^*(I))(j)$ with \ $j\equiv \epsilon\mod 2$ \ or \ $j\equiv \epsilon -1\mod 2$. \ If $j\equiv \epsilon\mod 2$, then $d_i=1$, and if $j\equiv \epsilon-1\mod 2$, then $d_i=r$.  Letting $$J=\{1+\epsilon,3+\epsilon,\ldots,2\omega-1+\epsilon\}\subset [1,|R_\tau|]$$ be the subset of  indices congruent to $\epsilon -1$ modulo $2$, we are free to arrange for $[(R^*(I))(J)]$ to be  any subsequence of $R_\tau$ having length $\omega=\lfloor\frac12|R_\tau|\rfloor$, and then $d_i=r$ will hold for all these terms, while $d_i=1$ will hold for all remaining terms of $R_\tau$.

In summary, the above works shows that
$$\pi(R)=\tau^\epsilon\alpha^{\omega m}\{\alpha^{x}:\; x\in X\},$$ where \ber\nn X&=&\Sum{i=1}{|R_\alpha|}\left\{\iota\Big(R_\alpha^*(i)\Big),\;r\,\iota\Big(R_\alpha^*(i)\Big)\right\}
+
\left\{r\,\sigma(\iota(R_\tau'))+\sigma(\iota(R_\tau\bdot {R_\tau'}^{[-1]})):\;R_\tau'\mid R_\tau,\; |R_\tau'|=\omega=\left\lfloor\frac12|R_\tau|\right\rfloor\right\}\\
&=& \sigma(\iota(R))+(r-1)\Big(\left\{0,\iota\big(R_\alpha^*(1)\big)\right\}+\ldots+
\left\{0,\iota\big(R_\alpha^*(|R_\alpha|)\big)\right\}+
\Sigma_{\left\lfloor\frac12|R_\tau|\right\rfloor}(\iota(R_\tau))\Big) \nn
\eer
 and $R^*_\alpha\in \Fc^*(G)$ is any ordering of $R_\alpha$
Consequently, \be\label{Rpi}\pi(R)=\tau^{\epsilon}\alpha^{xm+\sigma(\iota(R))}\{(\alpha^{r-1})^y:\; y\in Y\},\ee where $$Y=\left\{0,\iota\Big(R_\alpha^*(1)\Big)\right\}+\ldots+
\left\{0,\iota\Big(R_\alpha^*(|R_\alpha|)\Big)\right\}
+\Sigma_{\left\lfloor\frac12|R_\tau|\right\rfloor}(\iota(R_\tau)) \,.$$ Since $\ord(\alpha^{r-1})=\ord(\alpha^{n^-})=n^+=p$, we conclude that $|\pi(R)|$ is equal to the number of distinct residue classes modulo $p$ in  $Y$.

\bigskip

Let us next apply some of the above reasoning to the sequence $U$ in the following claim, which shows that any sufficiently small subsequence can be placed in an ordering of $U$ with product one so as to avoid some long length, consecutive subsequence.

\subsection*{Claim A} If $T\mid U$ is a subsequence with $|T|\leq n^+-1$, then there exists an ordering of $U$, say $U^*\in \Fc^*(G)$ with $[U^*]=U$, and an interval $J\subset [1,|U|]$ such that $\pi(U^*)=1$, \ $T\mid \left[U^*\Big([1,|U|]\setminus J\Big)\right]$ and $|J|\geq 2n^-$.

\begin{proof}
Since $U\in \mathcal A(G)$ is an atom, there is an ordering of $U$, say $U^*\in \Fc^*(G)$ with $[U^*]=U$, such that  $\pi(U^*)=1$. In view of \eqref{Vsmall} and \eqref{wickergoal}, we know $\supp(U)\cap \tau\la \alpha\ra\neq \emptyset$. Thus, in view of  Lemma \ref{permutaton-lemma}, we can cyclically shift the terms of $U^*$ until w.l.o.g. $U^*(1)\in \tau\la \alpha\ra$. In view of the formula \eqref{relationtime} for $U^*=R^*$, we see that we can shift the position of a term $x$ of $U^*$ from $\la \alpha\ra$ while preserving that $\pi(U^*)=1$ so long as we maintain the parity of the number of terms of $U^*$ from $\tau\la \alpha\ra$ that follow to the right of $x$. In particular, we can put all terms of $U^*$ from $\la \alpha\ra$ for which this number is odd into a consecutive block starting with the second term of $U^*$, while also putting all terms of $U^*$ from $\la \alpha\ra$ for which this number is even into a consecutive block at the very end of $U^*$, and this will preserve that $\pi(U^*)=1$. In other words, we may w.l.o.g. assume $U^*$ has the form
\[
U^*= (\tau\alpha^{x_1})\bdot ( \alpha^{y_1}\bdot\ldots \bdot \alpha^{y_{t}})\bdot ( \tau\alpha^{x_2}\bdot\ldots\bdot  \tau\alpha^{x_{2w}})\bdot( \alpha^{y'_1}\bdot\ldots\bdot \alpha^{y'_{t'}}) \,,
\]
for some $t,\,t'\geq 0$ with $$t+t'=|U_\alpha|,\quad 2w=|U_\tau|\geq 2,\quad\und\quad x_i,\,y_i,\,y'_i\in [0,n-1].$$
Let $J'\subset [1,|U|]$ be those indices $j\in [1,|U|]$ such that $U^*(j)\in  \tau\la \alpha\ra$, i.e.,
$$J'=\{1\}\cup [t+2,t+2w].$$

Now $T=U^*(I)$ for some $I\subset [1,|U|]$. Factor $T=T_\alpha \bdot T_\tau$ with $\supp(T_\alpha)\subset \la \alpha\ra$  and $\supp(T_\tau)\subset \tau\la \alpha\ra$. Since $\supp(U^*(J'))\subset \tau\la \alpha\ra$, we see that $T_\alpha$ is disjoint from  $U^*(J')$. For the remaining terms of $T$, we must have $T_\tau=(U^*(J'))(X)$ for some subset $X\subset [1,2w]$.
Let $X=X^+\cup X^-$, where $X^+\subset X$ is the subset of
 indices $x\in X$ with $x$ even and $X^-\subset X$ is the subset of  indices $x\in X$ with $x$ odd.
Consider an arbitrary term of $T$ from $\tau\la \alpha\ra$, say $(U^*(J'))(x)$ with $x\in X\subset [1,2w]$. If $x\in X^-$, then $(U^*(J'))(x)$ can be moved freely about in
$(U^*(J'))(\{1,2,\ldots,2w-1\})$ without changing that $\pi(U^*)=1$. Likewise, if $x\in X^+$, then $(U^*(J'))(x)$ can be moved freely about in $(U^*(J'))(\{2,4,\ldots,2w\})$ without changing that $\pi(U^*)=1$. Consequently, we can w.l.o.g assume that $X^-$ consists of the first $|X^-|$ elements from $\{1,3,\ldots,2w-1\}$ and that $X^+$ consists of the first $|X^+|$ elements from
$\{2,4,\ldots,2w\}$. But this means that $$T_\tau\mid U^*\Big(\{1\}\cup [t+2,t+2|X|]\Big).$$ As a result, setting $$J:=[t+1+\max\{2|X|,1\},t+2w]\subset J'\setminus\{1\}$$ and recalling from the beginning of the paragraph that $T_\alpha$ is disjoint from  $U^*(J')$, we find that  $T\mid \left[U^*\Big([1,|U|]\setminus J\Big)\right]$. It remains to estimate $|J|$.

Since $|X|=|T_\tau|\leq |T|\leq n^+-1$ holds by hypothesis, it follows in view of \eqref{n^+assump} that  \be\label{wickel}|J|=2w-\max\{2|X|,1\}=|U_\tau|-\max\{2|X|,1\}\geq |U_\tau|-\max\{2|T|,1\}\geq |U_\tau|-2n^++2.\ee
From \eqref{Vsmall} and \eqref{wickergoal}, we know $$|U_\tau|=|U|-|U_\alpha|\geq (n+n^++1)-(n^++n^--3)=n^+n^--n^-+4.$$ Combining this with \eqref{wickel} and making use of \eqref{n^+assump} and \eqref{n^-assump}, we find that
$$|J|\geq n^+n^--2n^+-n^-+6=3n^--6-n^-+6=2n^-,$$ completing the proof of Claim A.
\end{proof}

\bigskip

We will say that a subsequence $T\mid U$ is \emph{good} if it has an ordering $T^*\in \Fc^*(G)$, so  $[T^*]=T$, such that \be\label{Tnotation}T^*= y_1\bdot z_1\bdot\ldots\bdot  y_w\bdot z_w \bdot x_1\bdot\ldots\bdot  x_v\ee with $v,\,w\geq 0$,   $$x_i\in \la \alpha\ra\setminus \mathsf Z(G)\quad\mbox{ for $i\in [1,v]$,}\quad \und \quad y_i,\,z_i\in \tau\la \alpha\ra\quad\und \quad \iota(y_i)\not\equiv \iota(z_i)\mod p \quad\mbox{ for $i\in [1,w]$.}$$ Furthermore,
we define \be\label{starywavyness}\varphi(T^*)= (y_1z_1) \bdot\ldots\bdot
 (y_w z_w) \bdot x_1\bdot\ldots\bdot x_v\in \Fc^*(\la \alpha\ra)\quad\und\quad \ell(T)=|\varphi(T^*)|=v+w.\ee
We continue with the following claim.

\subsection*{Claim B} If $T\mid U$ is a good subsequence  with $\ell(T)\geq n^+-1=p-1$, then $\pi(T)$ is a $G'$-coset.

\begin{proof}
 Let $T^*\in \Fc^*(G)$ be an ordering from the definition of $T$ notated as in \eqref{Tnotation} and \eqref{starywavyness}.
Since  $\ell(T)\geq n^+-1=p-1$, it follows from \eqref{Vsmall} that $w\geq 1$. As remarked in Section \ref{sec-notation}, $\pi(T)$ is contained in a $G'$-coset. Therefore we need to show that $|\pi(T)|=|G'|=p$.

Since $w\geq 1$, it follows from \eqref{Rpi} and the definition of $\overline{g}$ that \be\label{tommow}|\pi(T)|=|\{0,\overline{x_1}\}+\ldots+\{0,\overline{x_v}\}+\Sigma_{w}
(\overline{T_\tau})|,\ee where $T_\tau\mid T$ is the subsequence of terms from $\tau\la \alpha\ra$. Note that $|T_\tau|=2w$.
Since $T$ is good, we know $x_i\in \la \alpha\ra\setminus \mathsf Z(G)=\la \alpha\ra\setminus \la \alpha^{p}\ra$ for $i\in [1,v]$, which means that $\overline{x_i}\neq 0$ for all $i\in [1,v]$.
Consequently,
since $p$ is prime, we can apply the Cauchy-Davenport Theorem to $\{0,\overline{x_1}\}+\ldots+\{0,\overline{x_v}\}$ to conclude \be\label{trepid}|\{0,\overline{x_1}\}+\ldots+\{0,\overline{x_v}\}|\geq \min\{p,\,v+1\}.\ee
Since $T$ is good, we have $\overline{y_i}\neq \overline{z_i}$ for $i\in [1,w]$, which together with the pigeonhole principle ensures that $\mathsf h(\overline{T_\tau})\leq w$. Consequently, since $p$ is prime, we can apply Theorem \ref{DGM-devos-etal-thm} to $\Sigma_{w}(\overline{T_\tau})$ to conclude
\be\label{trepidmore} |\Sigma_{w}(\overline{T_\tau})|\geq \min\{p,\,|T_\tau|-w+1\}=\min\{p,\,w+1\}.\ee Applying the Cauchy-Davenport Theorem to the $2$-fold sumset $\left(\{0,\overline{x_1}\}+\ldots+\{0,\overline{x_v}\}\right)+\Sigma_{w}(\overline{T_\tau})$,  using \eqref{trepid} and \eqref{trepidmore}, and recalling the case hypothesis $\ell(T)\geq p-1$, it follows that $$|\left(\{0,\overline{x_1}\}+\ldots+\{0,\overline{x_v}\}\right)+\Sigma_{w}(\overline{T_\tau})|\geq \min\{p,\,v+1+w+1-1\}=\min\{p,\,\ell(T)+1\}=p.$$ Combining this with \eqref{tommow} completes the proof of Claim B.
\end{proof}

\bigskip

Let $T\mid U$ be a good subsequence with $\ell(T)\geq 0$ maximal and  let $T^*\in \Fc^*(G)$ be an ordering from the definition of $T$ notated as in \eqref{Tnotation} and \eqref{starywavyness}. We handle two cases.

\subsection*{Case 1:} $\ell(T)\geq 2n^++n^--3$.

Recall the definition of $\varphi(T^*)$ given in \eqref{starywavyness}. We first proceed to show that there is a good subsequence $T'\mid T$ with \be\label{goodnesss} \ell(T')\geq n^+-1,\quad \pi(T')\subset G',\quad T\bdot {T'}^{[-1]}\quad\mbox{ a good subsequence,}\quad \und\quad \ell(T\bdot {T'}^{[-1]})\geq n^+-1.\ee
To do so, it suffices, in view of the case hypothesis $\ell(T)=|\varphi(T^*)|\geq 2n^++n^--3$,  to show that $[\phi_{G'}(\varphi(T^*))]$ has a product-one subsequence of length $\ell\in [n^+-1,n^+-2+n^-].$ Note that $$[\phi_{G'}(\varphi(T^*))]\in \Fc(\la \alpha\ra/G').$$
Thus, since $\ell(T)\geq n^+-2+n^-$ holds by hypothesis, and since $\mathsf d(\la \alpha\ra/G')+1\leq |\la \alpha\ra/G'|=n^-$ follows from Lemma \ref{dav-up-lemma}, such a subsequence can be found simply by repeated application of the definition of $\mathsf d(\la \alpha\ra/G')$ to $[\phi_{G'}(\varphi(T^*))]$.
This establishes \eqref{goodnesss}.

In view of \eqref{goodnesss} and Claim B, we have $\pi(T')=G'$. In particular, $T'$ is a nontrivial, product-one subsequence of $U$. Thus Lemma \ref{lemma-G'} shows that $\pi(U\bdot {T'}^{[-1]})\subset G'$.
As a result, since $T\bdot {T'}^{[-1]}\mid U\bdot {T'}^{[-1]}$, it follows in view of \eqref{goodnesss} and Claim B that $\pi(U\bdot {T'}^{[-1]})= G'$, so that $U\bdot{T'}^{[-1]}$ is also a product-one subsequence.
But now $U=T'\bdot(U\bdot{T'}^{[-1]})$ is a factorization of $U$ into $2$ nontrivial, product-one subsequences, contradicting that $U\in \mathcal A(G)$ is an atom. This completes Case 1.

\subsection*{Case 2:} $\ell(T)\leq 2n^++n^--4$.

In view of \eqref{Vsmall}, we know $|U_\alpha\bdot {U'_\alpha}^{[-1]}|\leq n^--1$.
We have  \be\label{gogetem} 2(2n^++n^--4)+1\leq |U|-|U_\alpha\bdot {U'_\alpha}^{[-1]}|,\ee for if \eqref{gogetem} failed, then $|U_\alpha\bdot {U'_\alpha}^{[-1]}|\leq n^--1$,
 \eqref{wickergoal}, \eqref{n^+assump} and \eqref{n^-assump}  would imply   $$0>|U|-4n^+-3n^-+8\geq n^+n^--3n^+-3n^-+9=(n^+-3)(n^--3)\geq 0,$$ which is a contradiction.
In view of the maximality of $\ell(T)$, we must have $U'_\alpha\mid T$.
Let $T_\tau=T\bdot {U'_\alpha}^{[-1]}$. Then, in view of the case hypothesis, it follows that  $T_\tau\mid U_\tau$ is a good subsequence with \be\label{W-max}\ell(T_\tau)=\ell(T)-|U'_\alpha|\leq   2n^++n^--|U'_\alpha|-4\quad\mbox{ maximal subject to $T_\tau\mid U_\tau$}.\ee

From \eqref{gogetem}, we deduce that \be\label{gabsam} 2(2n^++n^--4-|U'_\alpha|)+1\leq |U|-|U_\alpha\bdot {U'_\alpha}^{[-1]}|-|U'_\alpha|=|U_\tau|.\ee Now $|U_\tau|$ must be even as remarked in the paragraph above \eqref{relationtime}, which means that the inequality in \eqref{gabsam} must be strict: \be\label{ahahahahhah}2\ell:=2(2n^++n^--|U'_\alpha|-3)\leq |U_\tau|.\ee

It is readily seen that a subsequence $R\mid U_\tau$ being a good is equivalent to $\overline{R}$ having an $\frac12|R|$-setpartition with terms of as near equal a size as possible and $|R|$ even. In view of \eqref{W-max} and \eqref{Vsmall}, we see that $U_\tau$ does not have a good subsequence $R\mid U_\tau$ with $$\ell(R)=\frac{1}{2}|R|=\ell=2n^++n^--|U'_\alpha|-3\geq 1.$$ Thus applying Lemma \ref{lemma-setpartition-exsitence} to $\overline{U_\tau}$ taking $\ell=n$, we conclude that either $2\ell>|U_\tau|$ or there exists a nonempty subset $X\subset G$ with $|X|\leq \lfloor\frac{\ell-1}{\ell}+1\rfloor=1$ such that at least $|U_\tau|-\ell+1$ terms of $|\overline{U_\tau}|$ are all from $X$. In view of \eqref{ahahahahhah}, we see that the former is not possible, in which case the latter must hold, and with $|X|=1$. In other words, \be\label{miff}\mathsf h(\overline{U_\tau})\geq |U_\tau|-\ell+1.\ee

Now \eqref{miff} is equivalent to saying that there is some $x_0\in [0,p-1]$ such that all but at most $\ell-1$ terms of $U_\tau$ have the form $\tau\alpha^{x}$ with $x\equiv x_0\mod p$. However,
since $p=n^+=m^+\mid r+1$ follows from \eqref{tabel-n} in view of $\rho(P)=1$ and the definition of $m^+$, a short calculation shows that $$H:=\{\tau\alpha^x:\;x\in [0,n-1]\und x\equiv x_0\mod p\}\cup \{\alpha^y:\;y\equiv 0\mod p\}\leq G$$ is a subgroup of $G$ having $|H|=2n^-$. Indeed, $H=\mathsf C_G(\tau\alpha^{x_0})=\la \alpha^p,\tau\alpha^{x_0}\ra$, though we will not need this fact.

Let $U_H\mid U$ be the subsequence of $U$ with terms from $H$. In view of the two previous paragraphs, we see that \eqref{miff} is equivalent to saying  \be\label{little-exceptions}|U\bdot {U_H}^{[-1]}|\leq |U'_\alpha|+\ell-1=2n^++n^--4.\ee
As a result, we have \be\label{littlemore} |U_H|\geq 2n^-+n^+-1,\ee for if \eqref{littlemore} failed, then combining this with \eqref{little-exceptions} and \eqref{wickergoal} would yield
$$n^+n^-+n^++1\leq |U|=|U_H|+|U\bdot {U_H}^{[-1]}|\leq 3n^++3n^--6,$$ and then rearranging the above inequality and applying \eqref{n^+assump} and \eqref{n^-assump} yields the contradiction
$$0\geq n^+n^--2n^+-3n^-+7 \geq 3n^--6-3n^-+7=1.$$

Let $x$ be a term from $U\bdot U_H^{[-1]}$. If $x\in \la \alpha\ra$, then $x\in \supp(U'_\alpha)$ and $x$ can be included in a good sequence. On the other hand, if $x\in \tau\la \alpha\ra$, then $x$ can be paired with any term from  $U_H$ lying in $\tau \la\alpha \ra$ and thereby included in a good sequence. In particular, if we know that there are at least $t\geq 0$ terms of $U$ from $H\cap \tau\la \alpha\ra$, then $U$ possesses a good subsequence $R\mid U$ with $\ell(R)=\min\{t,|U\bdot U_H^{[-1]}|\}$.

\medskip

With these key facts finally established, we are now ready to finish the proof, which we do in $2$ short subcases.

\subsection*{Case 2.1} $|U\bdot {U_H}^{[-1]}|\geq n^+-1$.

Recall that $H\cap \la \alpha\ra=\la \alpha^p\ra=\mathsf Z(G)$ and that $U_\alpha\bdot {U'_\alpha}^{[-1]}$ is the subsequence of $U$ consisting of all terms from $\mathsf Z(G)$. Thus,
in view of \eqref{littlemore} and \eqref{Vsmall}, we can find a subsequence $U'_H\mid U_H$ with $U_\alpha \bdot {U'_\alpha}^{[-1]}\mid U'_H$ and $|U'_H|=2n^-$. Since  $U_\alpha\bdot {U'_\alpha}^{[-1]}$ is the subsequence of $U$ consisting of all terms from $\mathsf Z(G)=H\cap \la \alpha\ra$, since $U_\alpha \bdot {U'_\alpha}^{[-1]}\mid U'_H$ and since $\supp(U_H)\subset H$, we see that  \be\label{gobleek} \supp(U_H\bdot {U'_H}^{[-1]})\subset H\cap \tau\la \alpha\ra.\ee
Since $|U'_H|=2n^-=|H|$, applying Lemma \ref{dav-up-lemma} to $U'_H$ yields a nontrivial, product-one subsequence $R\mid U'_H$. From Lemma \ref{lemma-G'}, it follows that \be\label{woodoo}\pi(U\bdot R^{[-1]})\subset G'.\ee
Since $R\mid U'_H$, we have \be\label{dispelling} U\bdot {U'_H}^{[-1]}\mid U\bdot R^{[-1]}.\ee
Since $R\mid U'_H$ and $U'_H\mid U_H$, we have \be\label{dddispelling} U\bdot {U_H}^{[-1]}\mid U\bdot R^{[-1]}.\ee
From \eqref{littlemore}, we find that $$|U_H\bdot {U'_H}^{[-1]}|\geq 2n^-+n^+-1-|U'_H|= n^+-1.$$ Consequently, it follows in view of \eqref{gobleek} and \eqref{dispelling} that there are at least $n^+-1$ terms of $U\bdot R^{[-1]}$ from  $H\cap \tau\la \alpha\ra$. Combining this with \eqref{dddispelling} and applying the argument given just above Case 2.1, it follows that $U\bdot R^{[-1]}$ contains a good subsequence $T\mid U\bdot R^{[-1]}$ with $\ell(T)\geq \min\{n^+-1,|U\bdot {U_H}^{[-1]}|\}\geq n^+-1$, where the latter inequality follows in view of the subcase hypothesis. But now, in view of \eqref{woodoo}, we can apply Claim B to find that $\pi(U\bdot R^{[-1]})$ is not just contained in $G'$, but must be equal to $G'$, so $\pi(U\bdot R^{[-1]})=G'$. Hence $U=R\bdot (U\bdot R^{[-1]})$ is a factorization of $U$ into $2$ nontrivial, product-one subsequences, contradicting that $U\in\mathcal A(G)$ is an atom.

\subsection*{Case 2.2}$|U\bdot {U_H}^{[-1]}|\leq n^+-2$.

In this case, we can apply Claim A using $T=U\bdot {U_H}^{[-1]}$ to find an ordering of $U$, say $U^*\in \Fc^*(G)$ with $[U^*]=U$, and an interval $J\subset [1,|U|]$ such that $$\pi(U^*)=1,\quad U\bdot {U_H}^{[-1]}\mid U^*\Big([1,|U|]\setminus J\Big)\quad\und\quad |J|\geq |H|=2n^-.$$ In view of $U\bdot {U_H}^{[-1]}\mid U^*([1,|U|]\setminus J)$, we have $U^*(J)\mid U_H$. Thus $U^*(J)\in \Fc(H)$ with $|U^*(J)|=|J|\geq 2n^-=|H|$. As a result, applying Lemma \ref{dav-up-lemma} yields a nontrivial, consecutive, product-one subsequence $R^*$ in $U^*(J)$ with $|R^*|\leq 2n^-$. Since $U^*(J)\mid U^*$ is also consecutive (as $J\subset [1,|U|]$ is an interval), this means that $R^*\mid U^*$ is a nontrivial, consecutive, product-one sequence in $U^*$ with $U=[U^*]\in \mathcal A(G)$ an atom, in which case Lemma \ref{lemma-no-consecutive-prod-1} ensures that $R^*=U^*$. But then \eqref{wickergoal} and \eqref{n^+assump} give $$2n^-\geq |R^*|=|U^*|=|U|\geq n^+n^-+n^++1\geq 3n^-+4,$$ which is a proof concluding contradiction.
\end{proof}




\section*{Acknowledgement} We wish to thank the referee for their suggestions, particularly the idea leading to Theorem \ref{thm-induc-bound}, without which the paper would have remained much more limited in scope.


\bigskip

\begin{thebibliography}{10}

\bibitem{Ba-Ch11a}
P.~Baginski and S.T. Chapman, \emph{Factorizations of algebraic integers, block
  monoids, and additive number theory}, Am. Math. Mon. \textbf{118} (2011), 901
  -- 920.

\bibitem{Ba07b}
J.~Bass, \emph{Improving the {E}rd{\H{o}}s-{G}inzburg-{Z}iv theorem for some
  non-abelian groups}, J. Number Theory \textbf{126} (2007), 217 -- 236.

\bibitem{Be08a}
Y.~Berkovich, \emph{Groups of prime power order, vol {I}}, Expositions in
  Mathematics, vol.~46, de Gruyter, 2008.

\bibitem{B-D-G-L03}
A.~Bialostocki, P.~Dierker, D.~Grynkiewicz, and M.~Lotspeich, \emph{On some
  developments of the {E}rd{\H{o}}s-{G}inzburg-{Z}iv {T}heorem {II}}, Acta
  Arith. \textbf{110} (2003), 173 -- 184.

\bibitem{Ca96b}
Y.~Caro, \emph{Zero-sum problems - a survey}, Discrete Math. \textbf{152}
  (1996), 93 -- 113.

\bibitem{De-Go-Mo09a}
M.~DeVos, L.~Goddyn, and B.~Mohar, \emph{A generalization of {K}neser's
  addition theorem}, Adv. Math. \textbf{220} (2009), 1531 -- 1548.

\bibitem{Fa06a}
A.~Facchini, \emph{Krull monoids and their application in module theory},
  Algebras, {R}ings and their {R}epresentations (A.~Facchini, K.~Fuller, C.~M.
  Ringel, and C.~Santa-Clara, eds.), World Scientific, 2006, pp.~53 -- 71.

\bibitem{F-H-K-W06}
A.~Facchini, W.~Hassler, L.~Klingler, and R.~Wiegand, \emph{Direct-sum
  decompositions over one-dimensional {C}ohen-{M}acaulay local rings},
  Multiplicative {I}deal {T}heory in {C}ommutative {A}lgebra (J.W. Brewer,
  S.~Glaz, W.~Heinzer, and B.~Olberding, eds.), Springer, 2006, pp.~153 -- 168.

\bibitem{Ga-Ge06b}
W.~Gao and A.~Geroldinger, \emph{Zero-sum problems in finite abelian groups{\rm
  \,:} a survey}, Expo. Math. \textbf{24} (2006), 337 -- 369.

\bibitem{Ga-Li10b}
W.~Gao and Yuanlin Li, \emph{The {E}rd{\H{o}}s-{G}inzburg-{Z}iv theorem for
  finite solvable groups}, J. Pure Appl. Algebra \textbf{214} (2010), 898 --
  909.

\bibitem{Ga-Lu08a}
W.~Gao and Zaiping Lu, \emph{The {E}rd{\H{o}}s-{G}inzburg-{Z}iv theorem for
  dihedral groups}, J. Pure Appl. Algebra \textbf{212} (2008), 311 -- 319.

\bibitem{Ge09a}
A.~Geroldinger, \emph{Additive group theory and non-unique factorizations},
  Combinatorial {N}umber {T}heory and {A}dditive {G}roup {T}heory
  (A.~Geroldinger and I.~Ruzsa, eds.), Advanced Courses in Mathematics CRM
  Barcelona, Birkh{\"a}user, 2009, pp.~1 -- 86.

\bibitem{Ge-HK06b}
A.~Geroldinger and F.~Halter-Koch, \emph{Non-unique factorizations{\rm \,:} a
  survey}, Multiplicative {I}deal {T}heory in {C}ommutative {A}lgebra (J.W.
  Brewer, S.~Glaz, W.~Heinzer, and B.~Olberding, eds.), Springer, 2006, pp.~207
  -- 226.

\bibitem{Ge-HK06a}
\bysame, \emph{Non-{U}nique {F}actorizations. {A}lgebraic, {C}ombinatorial and
  {A}nalytic {T}heory}, Pure and Applied Mathematics, vol. 278, Chapman \&
  Hall/CRC, 2006.

\bibitem{Ge-Ne13b}
A.~Geroldinger and M.D. Neusel, \emph{On the interplay between invariant theory
  and zero-sum theory}, manuscript.

\bibitem{Gr14a}
D.J. Grynkiewicz, \emph{The large {D}avenport constant {II:} {G}eneral upper
  bounds}, manuscript.

\bibitem{Gr13a}
\bysame, \emph{Structural {A}dditive {T}heory}, to appear, 2013.

\bibitem{HK98}
F.~Halter-Koch, \emph{Ideal {S}ystems. {A}n {I}ntroduction to {M}ultiplicative
  {I}deal {T}heory}, Marcel Dekker, 1998.

\bibitem{HK08a}
\bysame, \emph{Non-unique factorizations of algebraic integers}, Funct.
  Approximatio, Comment. Math. \textbf{39} (2008), 49 -- 60.

\bibitem{Jo90a}
D.L. Johnson, \emph{Presentations of {G}roups}, Student Texts, vol.~15, London
  Math. Soc., 1990.

\bibitem{Jo-Kw-Xu13a}
G.A. Jones, Jinho Kwak, and Mingyao Xu, \emph{Finite {G}roup {T}heory for
  {C}ombinatorists}, Chapman \& Hall/CRC, to appear.

\bibitem{Na79}
W.~Narkiewicz, \emph{Finite abelian groups and factorization problems}, Colloq.
  Math. \textbf{42} (1979), 319 -- 330.

\bibitem{Ne07b}
M.~D. Neusel, \emph{Degree bounds -- an invitation to postmodern invariant
  theory}, Topology Appl. \textbf{154} (2007), 792 -- 814.

\bibitem{Ne07a}
\bysame, \emph{{I}nvariant {T}heory}, Student {M}athematical {L}ibrary,
  vol.~36, AMS, 2007.

\bibitem{Ne-Sm02a}
M.~D. Neusel and L.~Smith, \emph{{I}nvariant {T}heory of {F}inite {G}roups},
  {M}athematical {S}urveys and {M}onographs, vol.~94, AMS, 2002.

\bibitem{Ol-Wh77}
J.E. Olson and E.T. White, \emph{Sums from a sequence of group elements},
  Number {T}heory and {A}lgebra (H.~Zassenhaus, ed.), Academic {P}ress, 1977,
  pp.~215 -- 222.

\bibitem{Ro63}
K.~Rogers, \emph{A combinatorial problem in abelian groups}, Proc. Camb.
  Philos. Soc. \textbf{59} (1963), 559 -- 562.

\bibitem{Sc09b}
W.A. Schmid, \emph{Characterization of class groups of {K}rull monoids via
  their systems of sets of lengths{\rm \,:} a status report}, Number {T}heory
  and {A}pplications{\rm \,:} {P}roceedings of the {I}nternational
  {C}onferences on {N}umber {T}heory and {C}ryptography (S.D. Adhikari and
  B.~Ramakrishnan, eds.), Hindustan Book Agency, 2009, pp.~189 -- 212.

\bibitem{Wa-Ga08a}
Guoquing Wang and Weidong Gao, \emph{Davenport constants for semigroups},
  Semigroup Forum \textbf{76} (2008), 234 -- 238.

\bibitem{Za56a}
H.~Zassenhaus, \emph{The theory of groups, 2nd ed.}, Vandenhoeck \& Ruprecht,
  G{\"o}ttingen, 1956.

\bibitem{Zh-Ga05a}
J.~Zhuang and W.~Gao, \emph{{E}rd{\H{o}}s-{G}inzburg-{Z}iv theorem for dihedral
  groups of large prime index}, Eur. J. Comb. \textbf{26} (2005), 1053 -- 1059.

\end{thebibliography}

\providecommand{\bysame}{\leavevmode\hbox to3em{\hrulefill}\thinspace}
\providecommand{\MR}{\relax\ifhmode\unskip\space\fi MR }
\providecommand{\MRhref}[2]{%
  \href{http://www.ams.org/mathscinet-getitem?mr=#1}{#2}
}
\providecommand{\href}[2]{#2}

\end{document}